  \let\oldparagraph\paragraph
  \renewcommand{\paragraph}{
    \@ifstar
      \xxxParagraphStar
      \xxxParagraphNoStar
  }
  \newcommand{\xxxParagraphStar}[1]{\oldparagraph*{#1}\mbox{}}
  \newcommand{\xxxParagraphNoStar}[1]{\oldparagraph{#1}\mbox{}}
  \let\oldsubparagraph\subparagraph
  \renewcommand{\subparagraph}{
    \@ifstar
      \xxxSubParagraphStar
      \xxxSubParagraphNoStar
  }
  \newcommand{\xxxSubParagraphStar}[1]{\oldsubparagraph*{#1}\mbox{}}
  \newcommand{\xxxSubParagraphNoStar}[1]{\oldsubparagraph{#1}\mbox{}}
\providecommand{\tightlist}{%
  \setlength{\itemsep}{0pt}\setlength{\parskip}{0pt}}\usepackage{longtable,booktabs,array}
\patchcmd\longtable{\par}{\if@noskipsec\mbox{}\fi\par}{}{}
\def\maxwidth{\ifdim\Gin@nat@width>\linewidth\linewidth\else\Gin@nat@width\fi}
\def\maxheight{\ifdim\Gin@nat@height>\textheight\textheight\else\Gin@nat@height\fi}
\def\fps@figure{htbp}
  \renewcommand*\contentsname{Table of contents}
  \newcommand\contentsname{Table of contents}
  \renewcommand*\listfigurename{List of Figures}
  \newcommand\listfigurename{List of Figures}
  \renewcommand*\listtablename{List of Tables}
  \newcommand\listtablename{List of Tables}
  \renewcommand*\figurename{Figure}
  \newcommand\figurename{Figure}
  \renewcommand*\tablename{Table}
  \newcommand\tablename{Table}
\newcommand{\anon}{1}
\newcommand{\pch}[1]{{{\color{purple}  #1}}}
\newcommand{\E}{\mathbb{E}}
\newcommand{\N}{\mathbb{N}}
\newcommand{\R}{\mathbb{R}}
\newcommand{\I}{\mathbb{I}}
    \newcommand{\M}{\mathcal{M}}
\newcommand{\T}{\mathcal{T}}
\newcommand{\OO}{\mathcal{O}}
\newcommand{\bx}{\mathbf{x}}
\newcommand{\by}{\mathbf{y}}
\newcommand{\bz}{\boldsymbol{z}}
\newcommand{\bu}{\mathbf{u}}
\newcommand{\bw}{\mathbf{w}}
\newcommand{\bbeta}{\boldsymbol{\beta}}
\newcommand{\btheta}{\boldsymbol{\theta}}
\newcommand{\bepsilon}{\mathbf{\epsilon}}
\newcommand{\bX}{\mathbf{X}}
\newcommand{\bB}{\mathfrak{B}}
\newcommand{\bkappa}{\boldsymbol{\kappa}}
\newtheorem{thm}{\textsc{Theorem}}[section]
\newtheorem{prop}[thm]{\textsc{Proposition}}
\newtheorem{cor}[thm]{\textsc{Corollary}}
\newtheorem{lem}[thm]{\textsc{Lemma}}
\newcolumntype{C}[1]{>{\centering\let\newline\\\arraybackslash\hspace{0pt}}m{#1}}
\newcolumntype{M}[1]{>{\centering\arraybackslash}m{#1}}
\begin{document}

\def\spacingset#1{\renewcommand{\baselinestretch}%
{#1}\small\normalsize} \spacingset{1}


\if1\anon
{
  \title{\bf Improving variable selection properties with data integration and transfer learning}
  \author{Paul Rognon-Vael\thanks{
    The authors gratefully acknowledge the support of the ERC Grant BigBayesUQ project number: 101041064, grant Consolidaci\'on investigadora CNS2022-135963 by the AEI, PID2022-138268NB-I00 by MCIN/AEI/10.13039/501100011033 /FEDER, ICREA Academia fellowship from AGAUR (Generalitat de Catalunya), and NSERC grant RGPIN-2023-03481.}\hspace{.2cm}\\
    BIDSA, Bocconi University\\
    and \\
    David Rossell \\
    Department of Economics and Business, Universitat Pompeu Fabra \\
    and \\
    Piotr Zwiernik \\
    Department of Economics and Business, Universitat Pompeu Fabra}
  \maketitle
} \fi

\if0\anon
{
  \bigskip
  \bigskip
  \bigskip
  \begin{center}
    {\LARGE\bf Title}
\end{center}
  \medskip
} \fi

\bigskip
\begin{abstract}
We study variable selection (also called support recovery) in high-dimensional sparse linear regression when one has external information on which variables are likely to be associated with the response.
Consistent recovery is only possible under somewhat restrictive conditions on sample size, dimension, signal strength, and sparsity.
We investigate how these conditions can be relaxed by incorporating said external information. A key application that we consider is structural transfer learning, where variables selected in one or more source datasets are used to guide variable selection in a target dataset.
We introduce a family of likelihood penalties that depend on the external information, motivated by connections to Bayesian variable selection. We show that these methods achieve variable selection consistency in regimes where any method ignoring external information fails, and that they achieve consistency at faster rates. We first quantify the potential gains under ideal, oracle-chosen, penalties. We then propose computationally efficient empirical Bayes procedures that learn suitable penalties from the data. We prove that these procedures have improved variable selection properties compared to methods that do not use external information. We illustrate our approach using simulations and a genomics application, where results from mouse experiments are used to inform variable selection for gene expression data in humans.
\end{abstract}

\noindent%
{\it Keywords:} high-dimensional statistics, Bayesian variable selection, $\ell_0$ penalty, empirical Bayes, transfer learning.
\vfill

\newpage
\spacingset{1.8} 

\maketitle

\section{Introduction}

Many contemporary statistical problems involve high-dimensional models, where the goal is to identify a small subset of variables associated with a response.
In such settings, inference is often severely limited by the dataset sample size.
At the same time, basing inference on a single dataset increasingly feels like a wasted opportunity: it is common to have additional external information—such as prior studies, auxiliary datasets, or variable-specific annotations—that can suggest differential relevance across variables. This situation arises, for example, in multimodal data \citep{liang2023multimodal} where different groups of variables may exhibit different levels of sparsity, such as clinical versus genomic markers in biomedical applications.
It also arises when metadata are available on the variables themselves, for instance through functional annotations of genes.

A setting of particular interest is transfer learning, which broadly refers to improving performance on a target dataset by leveraging information from related source datasets.
In the context of variable selection, transfer learning uses information about variables identified in source datasets as external input to guide variable selection in a target regression problem.
We refer to this problem as \emph{structural transfer learning}, to distinguish it from approaches that focus on transferring parameter estimates or predictions.

In high-dimensional settings, leveraging external information is particularly attractive.
From a theoretical perspective, the limited information provided by a single dataset implies that strong assumptions are required for accurate inference. In structural learning problems such as variable selection, those assumptions regard sparsity and signal strength and, although mathematically necessary, they can be too strong in practice 
(see~\cite{giannone2021economic} for a discussion in the context of social sciences). From a practical perspective, numerous works observed improved inference when employing external information to guide structural learning \citep{stingo:2011, Boulesteix2017, chen_tinghuei:2021}. 

Despite this empirical success, a theoretical framework explaining precisely why and how leveraging external information may deliver improved structural learning is not currently available. 
Our main contribution is investigating how, in linear regression, integrating external information (and structural transfer learning as a particular case) would allow an oracle to push the mathematical conditions under which consistent variable selection is possible, and improving the corresponding rates. Consistency is understood as recovering the truly active variables with probability going to 1 as the sample size $n$, and possibly the dimension $p$, go to infinity. 
Our other main contribution is proposing an empirical Bayes data-based procedure that doesn't require oracle information, and showing that it also enjoys improved properties.

To make ideas concrete, consider the Gaussian linear regression
\begin{equation}\label{eq:linearmodel}
	\by \;=\; \bX \bbeta^* + \bepsilon,\qquad \bepsilon \sim N(0,\sigma^2 I_n),
\end{equation}
where $\bX\in \R^{n\times p}$, $\by\in \R^n$, $\sigma >0$, and $\bbeta^* \in \mathbb{R}^p$ are the data-generating parameters. Our results can be easily  extended to non-linear regression, where $\bX$ is given by a suitable basis such as splines. 
Penalized likelihood methods for variable selection are often based on optimizing the log-likelihood plus a penalty term driven by the $\ell_q$ "norm" of an estimated $\hat{\bbeta}$ for $q\in [0,1]$. 
Here we focus on $\ell_0$ penalties because they possess theoretically superior properties for support recovery compared to other penalties~\citep{infotheowainwright, gao2025optimalityl0}. That is, our goal is to study how external information can improve upon these best known properties. 

Besides, computational advances have made $\ell_0$ problems much more tractable: optimization methods can solve them exactly for $p$ in the thousands~\citep{bertsimas2020sparse} and Markov Chain Monte Carlo (MCMC) methods have linear cost in $p$ under sparsity conditions~\citep{Yang2016,zhou2022}. There is also a direct connection between $\ell_0$ penalties and Bayesian variable selection ~\citep{BIC,EBIC,rossell2022concentration}. Under mild regularity conditions, a choice of $\ell_0$ penalty corresponds to a choice of prior probability for variable inclusion. This connection converts $\ell_0$ penalization into a natural and intuitive way to encode external information on the relevance of variables for association with the outcome. 

We consider a variable selection framework where $\ell_0$ penalties may depend on external information $\bz=(z_1,\ldots,z_p) \in \R^p$. In transfer learning applications, each $z_j$ may be a $q$-dimensional vector measuring the importance of variable $j$ in $q$ source datasets. For theoretical tractability and to be able to provide sharp conditions and rates, here we focus on the case where $\bz$ partitions the $p$ variables into $b$ blocks, that is $z_j \in \{1,\ldots,b\}$.
We denote by $B_j \subset \{1,\ldots,p\}$ the set of variables in block$j=1,\ldots,b$.
When $\bz$ is informative for variable selection, key characteristics such as the proportion of truly active variables or the signal strength may vary across blocks, hence one allows the $\ell_0$ penalty to differ across blocks. 
We derive the variable selection properties of such informed $\ell_0$ penalties, first in general and then for structural transfer learning, and compare them to standard $\ell_0$ penalties.

Before proceeding, we review selected literature.
External information has been used to guide variable selection in many applications. For instance,~\cite{stingo:2011} and~\cite{cassese:2014} proposed Bayesian variable selection for gene expression where prior inclusion probabilities depend on biological knowledge and meta-variables.~\cite{chen_tinghuei:2021} predicted disease outcomes with $\ell_1$ penalties that depend on gene functional annotations. Several works discussed methodology, but not theory, for external-information dependent penalization or prior inclusion probabilities. In the Bayesian literature,~\cite{vandeWiel2019} suggested an empirical Bayes approach to regress prior inclusion probabilities on external data.~\cite{Velten2021} proposed a variational Bayes approximation for regression with external information-dependent prior inclusion probabilities. Recently,~\cite{busatto2023} considered a horseshoe prior that depends on external information. In the frequentist literature, in the context of multi-omics data,~\cite{Boulesteix2017} let the $\ell_1$ penalty depend on the modality of each variable.~\cite{Zeng2020} and~\cite{vanNee2023} proposed empirical Bayes to set externally-informed $\ell_1$ and elastic-net penalties respectively. See~\cite{vandewiel2024} for a review.

Previous theory on transfer learning in  regression has focused on parameter estimation, but provided no results on identifying the set of active variables as we do here.~\cite{Jiang2016} and~\cite{Li2021} showed improvements in convergence rates in prediction and estimation with $\ell_1$ penalties that depend on the discrepancy with source estimates. In a Bayesian setting,~\cite{lai2024} showed improvements in posterior contraction rates of parameters with horseshoe priors centered at a weighted average of source-specific estimates.

The paper is organized as follows.
In Section~\ref{sec:intro_blockpen}, we define externally-informed $\ell_0$ penalties and discuss their connection to Bayesian variable selection.
Section~\ref{sec:properties_legression} studies their theoretical properties, including oracle results characterizing the maximum gains achievable by informed $\ell_0$ penalties. In Section~\ref{sec:informed_l0}, we propose a data-driven empirical Bayes procedure for setting informed $\ell_0$ penalties.
Section~\ref{sec:TL} applies this procedure to transfer learning and establishes improved selection properties, including robustness to negative transfer in settings where the external data is uninformative regarding what variables are truly active, and to covariate shifts.
Section~\ref{sec:examplesinforml0} shows simulations and a gene expression study transferring findings from mice to humans.

\textbf{Notation:}
The parameters of interest in the target regression are denoted by $\bbeta\in \R^p$  
and $\bbeta^*$ denotes their true values. The set of variable indices is $V=\{1,\ldots,p\}$. For any $A\subseteq V$ and any vector $\bx\in \R^p$, $\bx_A$ denotes the subvector of $\bx$ with entries corresponding to indices in $A$. For any matrix $\bX \in \R^{n\times p}$, $\bX_A$ denotes the submatrix of $\bX$ obtained by selecting the \emph{columns} with indices in $A$. $S=\left\{i\in V:\,\,  \beta^*_i \neq 0 \right\}$ denotes the true support of $\bbeta^*$, its size is $s=|S|$, and hence $p-s$ is the number of truly inactive parameters. The set of candidate models is denoted by $\M$ and is given by subsets $M\subseteq V$. The set $V$ is partitioned into disjoint blocks $B_j \subseteq V$ for $j=1,\ldots,b$, where $b$ is fixed. $S_j=\left\{i\in B_j:\,\,  \beta^*_i \neq 0\right\}$ denotes the set of active parameters in block $B_j$, $s_j=|S_j|$ its size, $p_j-s_j$ the number of inactive parameters in the block. We denote by $\beta_{\min,j}^*$ and $\beta_{\min}^*$ lower bounds on the smallest active signal in $S_j$ and $S$ respectively, such that
the data-generating $\bbeta^*$ belongs to the set $\mathfrak{B}=\{\bbeta \in \R^p: \text{for all }j \in \{1,\ldots,b\} \text{ and }i \in B_j,\, |\beta_i|\geq  \beta^*_{\min,j} \text{ if } i\in S_j \text{ and }  \beta_i= 0 \text{ otherwise}\}$.

Given sequences $f(n) > 0$ and $g(n) >0$, $f(n)=O(g(n))$ means that there exists a constant $c<\infty$ such that $f(n) \leq c g(n)$ for all $n \geq n_0$ and some fixed $n_0$, $f(n)=o(g(n))$ means that $\lim_{n\to\infty}f(n)/g(n)=0$, and $f(n)\asymp g(n)$ means that $f(n) = O(g(n))$ and $g(n) = O(f(n))$. We write $f(n) \gtrsim g(n)$ when there exists a constant $c>1$ and a sequence $d(n)\to\infty$ such that $f(n) \geq c g(n)+d(n)$. For any set $A$, $A^C$ denotes the complement of $A$.

\section{Externally-informed penalties and transfer learning}\label{sec:intro_blockpen} 

We describe externally-informed $\ell_0$ penalization and their connection to Bayesian variable selection (Section~\ref{ssec:intro_blockpen}), and general assumptions used across our results (Section~\ref{ssec:genassumptions}).

\subsection{Externally-informed penalization}\label{ssec:intro_blockpen}

We recall the $\ell_0$-penalized approach to variable selection. Consider a set of candidate models $\M$ given by subsets $M\subseteq V$ such that $|M|\leq n$ and their parameter spaces
\[
\mathcal{L}_M := \{\beta\in \R^p: \beta_i=0 \text{ if } i\notin M\}.
\]
A standard $\ell_0$ procedure with penalty $\eta(M)=\kappa |M|$ for some $\kappa>0$ selects the model
\begin{equation}\label{eq:stdl0}
\hat{S}^{sd}= \arg\max_{M\in \M} \left\{\max_{\beta\in \mathcal{L}_M}\ell(\by; \bbeta)-\kappa |M|\right\},
\end{equation}
where $\ell(\by; \bbeta)$ denotes the log-likelihood function. 
Both $p$ and the number of models $|\mathcal{M}|$ may far exceed $n$, and we may only consider models of size $|M| \leq n$ without loss of generality (if $\kappa>0$ such models are never selected).

A useful way to interpret $\ell_0$ penalization is via its connection to Bayesian variable selection under discrete spike-and-slab priors where, for each variable $i$, one has a binary indicator $m_i=I(\beta_i\,\neq\,0)$ for the inclusion of variable $i$ in the model. Each model $M \in \M$ corresponds then to a particular value of $(m_1,\ldots,m_p)$. The joint prior on parameters and models is 
\begin{equation}\label{eq:bayesianframework}
    p(\bbeta,M \mid \theta)\;=\;p( \bbeta \mid  M) p(M \mid \theta)
\end{equation}
where $p( \bbeta \mid  M)$ is a prior on $\bbeta$ given model $M$, and 
variable inclusions are independent a priori with inclusion probability $\theta$, that is
\begin{equation} \label{eq:modelprior}
p( M \mid \theta) \propto
        \prod_{i=1}^p \operatorname{Bern}\left(m_i ; \theta\right).
\end{equation}
Posterior model probabilities are $p(M \mid \by, \theta) \propto p(\by \mid M) p(M \mid \theta)$, where $p(\by \mid M)$ is the so-called marginal likelihood of model $M$. The BIC approximation~\citep{BIC} to $p(M\mid \by, \theta)$ gives that, for a wide family of priors $p(\bbeta \mid M)$, 
\begin{equation}\label{eq:dfbapon}
\ln p(M \mid\by, \theta) \;\approx\; \ln p(\by \mid \tilde{\bbeta}^{(M)}) - \Big(\frac{1}{2}\ln(n) + \ln \big(1/\theta-1\big)\Big)|M| + c_M,
\end{equation}
where $\tilde{\bbeta}^{(M)}$ is the maximum likelihood estimator (MLE) under model $M$, and $c_M$ is a constant that may depend on $M$.
This approximation makes explicit the close relationship between Bayesian model selection and $\ell_0$ penalties. Neglecting $c_M$ and retaining only terms that grow with $n$, simple algebra shows that the $\ell_0$ selector in~\eqref{eq:stdl0} with $\kappa = \frac12 \ln(n) + \ln(1/\theta - 1)$ approximately maximizes $p(M \mid \by, \theta)$. This gives a correspondence between a choice of prior inclusion probability $\theta$ and a choice of $\ell_0$ penalty $\kappa$. 

This Bayesian perspective also suggests how external information can be incorporated into variable selection. Assume that one has external information $\bz=(z_1,\ldots,z_p)$ on the relevance of variables for association with $\by$. In a spike-and-slab prior, this information can be naturally reflected by setting prior inclusion probabilities that depend on $\bz$, that is replacing $\theta$ in~\eqref{eq:modelprior} by $\theta(z_i)$, $i=1,\ldots,p$. When $\bz$ is discrete, it defines blocks of variables that share similar prior relevance. In this work, we assume that $\bz$ partitions the variables into blocks $B_1,\ldots,B_b$, 
obtaining block-dependent prior probabilities $\theta(z_i)= \theta_j$ for all $i \in B_j$,
 \begin{eqnarray} \label{eq:block_modelprior}
 p( M \mid \btheta) \propto
         \prod_{i=1}^p \operatorname{Bern}\left(m_i ; \theta_j\right) I(i \in B_j),
\end{eqnarray}
The BIC approximation for this model prior gives
\begin{eqnarray}\label{eq:block_modelprob}
\ln p(M \mid\by, \mathbf{\theta}) \;\approx\; \ln p(\by \mid \tilde{\bbeta}^{(M)}) - \sum_{j=1}^b\Big(\frac{1}{2}\ln(n) +\ln \big(1/\theta_j-1\big)\Big)|M_j| + c_M,
\end{eqnarray}
where $M_j \subseteq B_j$ are
the selected variables in block $j$ and $M=\cup_{j=1}^b M_j$ the model.

Motivated by this correspondence, we now return to a frequentist perspective. We study in this work the $\ell_0$ selector analogue to maximizing $p(M \mid \by, \mathbf{\theta})$ in~\eqref{eq:block_modelprob}. That is, an informed $\ell_0$ selector that penalizes differently the blocks $B_1,\ldots,B_b$,
\begin{equation}\label{eq:Mhat}
  \hat S^{ei}\;\in\;\arg\max_{M\in \M} \left\{\max_{\bbeta\in \mathcal{L}_M}\ell(\by; \bbeta)-\sum_{j=1}^b\kappa_j |M_j| \right\},
\end{equation}
where $\kappa_1>0,\ldots,\kappa_b>0$. Under the BIC approximation in~\eqref{eq:block_modelprob}, $\hat S^{ei}$ corresponds to the posterior mode with block prior inclusion probabilities $\theta_j=(\exp(\kappa_j-\ln(n)/2)+1)^{-1}$. In the case that $b=1$ (all variables form a single block), $\hat S^{ei}$ recovers $\hat{S}^{sd}$ in \eqref{eq:stdl0}. Throughout, the superscripts “sd” and “ei” stand for “standard” and “externally-informed,” respectively.

\subsection{General assumptions of theoretical results}\label{ssec:genassumptions}

We study the properties of $\hat{S}^{ei}$ as the sample size $n \to \infty$.
Although not explicitly denoted, $p$, $s$ and $\beta_{\min}^*$ are functions of $n$, and so are $s_j$, $p_j$ and $\beta_{\min,j}^*$. In particular the numbers of inactive and active variables ($p_j-s_j$ and $s_j$) can be either fixed or grow with $n$. We make no assumption about the regime linking $n$ and $p$, but the main interest is in $n= o(p)$ settings. For convenience, we assume that in each block $j$, $p_j-s_j\geq1$ and $s_j\geq1$, but our results can easily be adapted to $p_j-s_j=0$ and $s_j=0$.

We assume that $\M$ is a well-specified set of candidate models, that is $S$, the support of $\bbeta^*$, lies in $\M$, and that one constrains attention to models with full-rank design, such that for any model $M \in \M$, $\operatorname{rank}(\bX_M^{\top}\bX_M)=|M|$. This assumption implies $s\pch{=|S|}\leq n$. We also assume that $\hat S^{ei}$ is unique and that the error variance $\sigma^2$ is known. To simplify notation, without loss of generality, we set $\sigma^2=1$. Non-uniqueness of $\hat{S}^{ei}$, non-full rank models, and unknown $\sigma^2$ can be accommodated, at the expense of a slightly more involved treatment.

\section{Oracle externally-informed penalties}\label{sec:properties_legression}

We discuss the properties of the externally-informed model selector $\hat S^{ei}$ in \eqref{eq:Mhat} in the linear model~\eqref{eq:linearmodel}. Our goal is to show that $P(\hat S^{ei}=S)\to 1$ as $n\to\infty$, and that this occurs under milder conditions or at a faster rate than for the standard selector $\hat S^{sd}$ in \eqref{eq:stdl0}, which uses a common penalty across blocks. In Section~\ref{ssec:oracleprop}, we prove the consistency of $\hat S^{ei}$ under sufficient conditions. In Section \ref{ssec:necessary_conditions} we discuss matching necessary conditions that are important not only to ensure the tightness of our analysis, but also to describe the benefits of $\hat S^{ei}$ over $\hat{S}^{sd}$. 
In Section~\ref{ssec:regression_gains}, we discuss these benefits in relation to the relevance of the external information, that is to what extent the blocks discriminate between truly active and inactive signals. In this section we assume that penalties are set by an oracle that knows the true pattern of sparsity and signal strength. In Section~\ref{ssec:nobetaminhyp}, we establish a generalized convergence result for $\hat S^{ei}$ under weak conditions on signal strength, which plays an important role for the data-based procedures in Section~\ref{sec:informed_l0}.

\subsection{Oracle properties}\label{ssec:oracleprop}

We first outline our proof strategy. 
We remark that, although the strategy builds upon that in \cite{rossell2022concentration}, there are significant innovations. \cite{rossell2022concentration} did not consider block penalties and, when applied to the single penalty case, our sufficient conditions to attain consistency are milder and the associated rates are sharper. We also obtain new necessary conditions (Section \ref{ssec:necessary_conditions}) that nearly match the sufficient conditions. 
We also compared our results to those obtained under a Gaussian sequence model, a simplified setting where a sharp characterization is possible, obtaining analogous results to our regression setting. Finally, the proof and results in Section \ref{ssec:nobetaminhyp} are new, and these are crucial for the theory of our data-based procedure in Section \ref{sec:informed_l0}.

For any model $M\in \M$, let $\tilde{\bbeta}^{(M)}= (\bX_{\!\!M}^\top \bX_{\!\!M})^{-1}\bX_{\!\!M}^\top\by \in \R^{|M|}$ be the MLE under model $M$, and 
\begin{equation}\label{eq:CM}
    C(M)\;: =\;\tfrac{1}{2}\|\bX_M\tilde{\bbeta}^{(M)}\|^2-\sum_{j=1}^b\kappa_j |M_j|\quad\text{and}\quad NC(M)\;:=\;\frac{\exp(C(M))}{\underset{M' \in \M}{\sum} \exp(C(M'))}.
\end{equation}
\begin{lem}\label{lem:usefulreformulation} 
$\hat{S}^{ei}$ satisfies $\hat{S}^{ei}\;=\;\arg\max_{M\in \M} NC(M)$ 
\end{lem}

By Lemma~\ref{lem:usefulreformulation}, $\hat{S}^{ei}$ selects
$M\in \M$ with the largest $NC(M)$, which relates to the sum-of-squares explained by $M$. By the BIC approximation in~\eqref{eq:block_modelprob}, $C(M)$ and $NC(M)$ can be understood as, respectively, the unnormalized and normalized pseudo-posterior probability for model $M$ in a Bayesian framework. 
We refer to $C(M)$ and $NC(M)$ as the unnormalized and normalized scores for $M$, respectively.
Lemma~\ref{lem:L0toL1convergence}, reproduced from~\cite{rossell2022concentration}, proves that the expected sum of the normalized scores for $M\neq S$ bounds the probability of an incorrect model selection. 
\begin{lem}\label{lem:L0toL1convergence}
\begin{enumerate}[label= (\roman*)]\tightlist
\item
$P(\hat{S}^{ei} \neq S) \,\,\leq \,\, 2 \sum_{M \in \M\setminus \{S\}} \E\left(NC(M)\right)$.
\item  For any $M,M'\in\M$, such that $M\neq M'$, 
$NC(M)\;\leq \;\big(1+\exp(C(M')-C(M))\big)^{-1}$.
\end{enumerate}
\end{lem}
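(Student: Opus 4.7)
The plan is to prove the two parts separately, since part (ii) is a one-line algebraic bound and part (i) reduces to an indicator-to-posterior-probability estimate that hinges on the observation that, when $\hat S^b \neq S$, the model $S$ cannot be the global maximizer of $C(\cdot)$ and hence its normalized score is at most $1/2$.

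For part (ii), I would write the definition \eqref{eq:normalizedscore} explicitly and lower-bound the denominator using only two terms in the sum. Since $M \neq M'$ both lie in $\M$,
\[
\sum_{M''\in\M} e^{C(M'')} \;\geq\; e^{C(M)} + e^{C(M')},
\]
so dividing $e^{C(M)}$ by the right-hand side and factoring out $e^{C(M)}$ gives precisely $\big(1+e^{C(M')-C(M)}\big)^{-1}$. No probability is needed; it is just a property of the pseudo-posterior.

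For part (i), the key observation is the following deterministic statement: on the event $\{\hat S^b \neq S\}$, by definition of $\hat S^b$ as the (assumed unique) maximizer of $C$ over $\M$, we have $C(\hat S^b) > C(S)$, so
\[
\sum_{M\in\M} e^{C(M)} \;\geq\; e^{C(S)} + e^{C(\hat S^b)} \;>\; 2\,e^{C(S)},
\]
which yields $NC(S) < 1/2$. Since the $NC(M)$ sum to one over $\M$, this forces $\sum_{M\in\M\setminus\{S\}} NC(M) > 1/2$. Consequently the indicator bound
\[
\I\{\hat S^b \neq S\} \;\leq\; 2\sum_{M\in\M\setminus\{S\}} NC(M)
\]
holds pointwise (trivially on the complementary event, where the left side is zero and the right side is non-negative). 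Taking expectations yields the claim.

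Neither step presents a real obstacle: part (ii) is a mechanical manipulation, and part (i) is the standard ``$\pi(S\mid y) \leq 1/2$ on the event of a selection error'' device familiar from Bayesian model selection (indeed, the lemma is imported from \cite{rossell2022concentration}). The only subtlety worth flagging is the uniqueness assumption on $\hat S^b$, which is what licenses the strict inequality $C(\hat S^b) > C(S)$ used above; without it one needs the mild strengthening that $\arg\max$ is interpreted as attaining a value strictly greater than $C(S)$ whenever $\hat S^b \neq S$, which is harmless under the tie-breaking convention already adopted in the paper.
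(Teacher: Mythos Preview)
Your proof is correct and follows essentially the same route as the paper. For part (ii) the paper gives the identical two-term lower bound on the denominator; for part (i) the paper (via its more general Lemma~\ref{lem:L0toL1convergencegen} with $k=1$) argues the contrapositive ``$NC(S)>1/2\Rightarrow \hat S^b=S$'' and then applies Markov's inequality to $P(\sum_{M\neq S}NC(M)\geq 1/2)$, which is exactly your pointwise indicator bound in disguise.
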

Using Lemma~\ref{lem:L0toL1convergence}(i), we control $P(\hat{S}^{ei} \neq S)$ by showing that
$\sum_{M\neq S}\E\{NC(M)\}$ vanishes asymptotically.
That is, we establish strong consistency in the sense that $NC(S) \stackrel{L_1}{\longrightarrow} 1$.
By Lemma~\ref{lem:L0toL1convergence}(ii) with $M'=S$, it suffices to control
$\E\!\left\{(1+\exp(C(S)-C(M)))^{-1}\right\}$, which reduces to bounding quadratic forms
of least-squares estimators.
We obtain these bounds using concentration inequalities and then summing over $M\neq S$.

We now state two assumptions that are sufficient to asymptotically recover $S$.

\hypertarget{lab:A1}{(A1)} \quad For each $j$, there exists $f_j\to \infty$ such that, 
\begin{equation*}
    \kappa_j \;=\;\ln(p_j-s_j) + f_j
\end{equation*} 
\hypertarget{lab:A2}{(A2)} \quad For each $j$, there exists $g_j\to \infty$ such that,
    \begin{equation*}
        \sqrt{\frac{(1-\gamma) n\rho(\bX)}{6}}{\beta_{\min,j}^*} \,- \,\sqrt{\kappa_j} \;=\; \sqrt{\ln(s_j)} + g_j,
    \end{equation*}
where $\rho(\bX)=\min_{M \in \M:\, M\not \supseteq S} \lambda_{\min}\big({n}^{-1} \bX_{S \setminus M}^{\top}\left(I_n- \bX_{M}\left(\bX_{M}^{\top} \bX_{M}\right)^{-1} \bX_{M}^{\top}\right) \bX_{S \setminus M}\big)$, and $\gamma:=\tfrac12(1+\max_j\ln(p_j-s_j)/\kappa_j)\in (\tfrac12, 1)$.

Assumption~\hyperlink{lab:A1}{A1} sets a lower-bound the block penalties $\kappa_j$, whereas Assumption~\hyperlink{lab:A2}{A2} imposes a beta-min condition on the block-wise signal strengths $\beta_{\min,j}^*$.
In the simplest orthonormal setting where $\bX^{\top}\bX=n\I_p$, the quantity $\rho(\bX)=1$. More generally, 
$\rho(\bX)\ge 0$ measures the correlation between truly active variables $\bX_S$ and other variable sets $\bX_M$. For further discussion see, e.g.,~\cite{infotheowainwright}.
Our proofs allow slightly weaker conditions, but we present~\hyperlink{lab:A1}{A1}--\hyperlink{lab:A2}{A2} in this form to facilitate interpretation and comparison with existing results.

We next state our main theorems on strong variable selection consistency of $\hat{S}^{ei}$ and the associated rates. In Theorem~\ref{prop:fullrecovreg}, the constants $\delta\in(0,1)$ and $\gamma^*\in(1/2,1)$ should be taken arbitrarily close to 1 and $1/2$, respectively. After presenting these results, we examine the tightness of Assumptions \hyperlink{lab:A1}{A1}–\hyperlink{lab:A2}{A2} relative to necessary conditions for selection consistency.

\begin{thm}\label{theo:suffcondlinearmodel}

Assume~\hyperlink{lab:A1}{A1} and \hyperlink{lab:A2}{A2}, and recall that $\mathfrak{B}=\{\bbeta \in \R^p: \text{for all }j \in \{1,\ldots,b\} \text{ and }i \in B_j,\, |\beta_i|\geq  \beta^*_{\min,j} \text{ if } i\in S_j \text{ and }  \beta_i= 0 \text{ otherwise}\}$. Then,
\[\lim_{n \to \infty}\, \sup_{\bbeta^*\in\mathfrak{B}}\sum_{M \in \M\setminus \{S\}}\E\left(NC(M)\right)= 0 \qquad \mbox{and} \qquad \lim_{n \to \infty}\, \inf_{\bbeta^*\in\mathfrak{B}} P(\hat{S}^{ei} = S)= 1.\]
\end{thm}

Theorem \ref{theo:suffcondlinearmodel} guarantees that the correct selection probability $P(\hat{S}^{ei} = S)$ converges to 1, uniformly across the set of $(s_1,\ldots,s_b)$-sparse $\bbeta^* \in \mathfrak{B}$.

\begin{thm}\label{prop:fullrecovreg}
Assume~\hyperlink{lab:A1}{A1} and \hyperlink{lab:A2}{A2}.
Then, there exists a constant $c>0$ such that for all sufficiently large $n$ and any $\delta \in (0,1)$,
\begin{equation}\label{eq:rateconvreg}
\sup_{\bbeta^*\in\mathfrak{B}}P(\hat{S}^{ei} \neq S)\leq c\sum_{j=1}^b\exp\big[-\tfrac{\delta}{2}(\kappa_j-\ln(p_j-s_j))\big] + 
   \exp\Big[- \tfrac{\delta}{2}\Big(\Big(\sqrt{\tfrac{(1-\gamma) n\rho(\bX)\beta_{\min,j}^*}{6}} - \sqrt{\kappa_j}\Big)^2 -\ln(s_j)\Big) \Big].
\end{equation}
Moreover, suppose that for some fixed $\gamma^*\in(1/2,1)$ and all $j=1,\ldots,b$, the $\kappa_j$ are set at the oracle values $\kappa^*_j$ satisfying
	\begin{equation} \label{eq:oraclepenreg}
\sqrt{\kappa^*_j\,} \;=\; \frac12\sqrt{\frac{(1-\gamma^*) n\rho(\bX)}{6}}{\beta_{\min,j}^*}+\frac12\sqrt{\frac{6}{(1-\gamma^*) n\rho(\bX)}}\frac{1}{\beta_{\min,j}^*}\big(\ln({p_j-s_j})-\ln({s_j})\big),
\end{equation}
and that $\lim_{n\to \infty}g(\gamma^*)\sqrt{ n\rho(\bX)/6}\;\beta_{\min,j}^* /(\sqrt{\ln(p_j-s_j)} + \sqrt{\ln(s_j)}) >1$ where $g(\gamma^*)=\sqrt{(-1 + 2 \gamma^*)(1-\gamma^*)}/(1 + \sqrt{2 - 2 \gamma^*})$.
Then Assumptions~\hyperlink{lab:A1}{A1} and \hyperlink{lab:A2}{A2} hold, and, for all sufficiently large $n$ and any $\delta \in (0,1)$, we have
\begin{equation}\label{eq:oraclerateconvreg}
\sup_{\bbeta^*\in\mathfrak{B}}P(\hat{S}^{ei} \neq S) \; \leq \; 2c \sum_{j=1}^b \exp\big[-\tfrac{\delta}{2}\big(\tfrac{(1-\gamma^*) n\rho(\bX)}{24}{\beta_{\min,j}^*}^2-\ln \max\{p_j-s_j,s_j\}\big)\big].
\end{equation}
\end{thm}

Theorem \ref{prop:fullrecovreg} gives the rate at which $P(\hat{S}^{ei} \neq S)$ vanishes.
In~\eqref{eq:rateconvreg}, each summand features a 
first term that decays exponentially in the penalty $\kappa_j$, and a second term that decays exponentially in 
$
(\sqrt{\tfrac{(1-\gamma)n\rho(\bX)}{6}}\,\beta_{\min,j}^* - \sqrt{\kappa_j})^2
$. These two terms capture competing type I and II error contributions.
The choice $\kappa_j=\kappa_j^*$ balances these contributions by equating their exponential rates, and thus approximately minimizes \eqref{eq:rateconvreg}.
We refer to $\kappa_j^*$ as \emph{oracle penalties}, since they depend on the unknown $s_j$ and $\beta_{\min,j}^*$. In~\eqref{eq:oraclerateconvreg}, we further derive a simplified upper bound for the rate achieved by $\kappa_j^*$.
This bound is sharpest in regimes where the signal strength term $n\rho(\bX)\beta_{\min,j}^{*2}$ grows faster than $\ln(p_j/s_j-1)$.

The results of Theorem~\ref{prop:fullrecovreg}, when applied to uninformed $\ell_0$ selector $\hat{S}^{sd}$, nearly match the minimax rates for the orthonormal case, where $\bX^{\top}\bX=n\I_p$ and $\rho(\bX)=1$.~\cite{butucea2018} show that, in this case, the choice $\sqrt{\tilde{\kappa}}=(1/2)\sqrt{n/2}\beta_{\min}^*+(1/2)\sqrt{2/n}\ln(p/s-1)/\beta_{\min}^*$ is exactly minimax for the Hamming loss and asymptotically minimax for the probability of incorrect recovery, when $\beta_j^* \geq 0$ for all $j$. The minimax rate for the Hamming loss is then shown to be the sum of a term decreasing exponentially in $\tilde{\kappa}-\ln(p-s)$ and a term decreasing exponentially in $(\sqrt{n/2}\beta_{\min}^*-\sqrt{\tilde{\kappa}})^2-\ln(s)$, similarly to our rate in~\eqref{eq:rateconvreg}.

\subsection{Necessary conditions}
\label{ssec:necessary_conditions}
We compare our sufficient Assumptions~\hyperlink{lab:A1}{A1} and~\hyperlink{lab:A2}{A2} to assumptions on $\kappa_j$ and $\beta_{\min,j}$ that are necessary for consistent variable selection with $\hat{S}^{ei}$. To obtain tight necessary assumptions, it suffices to require that $S$ is preferred over models that differ by  one variable. Consider, for each $j$, the subset of models that over-fit by only one variable from block $B_j$,
\begin{align}
    &\OO_j \:=\,\{M\in \M\;|\; M=S\cup\{i\} \text{ where } i\in B_j\setminus S_j\} \label{eq:defOj}.
\end{align}
Our necessary conditions involve the next two quantities
\begin{equation}\label{eq:ovpsk}
\underline{\lambda}_j\;:=\;\lambda^2_{\min}(C^j) 
\quad\text{ and}\quad
\bar{\lambda}\,:=\,\lambda_{\max}\Big(\frac1n \bX_{S}^\top\bX_{S}\Big), 
\end{equation}
where $C^j$ is the matrix in $\R^{(p_j-s_j) \times (p_j-s_j)}$ such that for any $M^k,M^l \in \OO_j$, $C^j_{k,l}=\operatorname{corr}\big(Z_{M^k},Z_{M^l}\big)$ and $Z_{M^k}^2=\|\bX_{M^k}\tilde{\bbeta}^{(M^k)}\|^2-\|\bX_{S}\tilde{\bbeta}^{(S)}\|^2$. That is, $Z_{M^k}^2$ is an increase in explained sum-of-squares measuring the extent to which $M^k$ overfits the observed data relative to $S$,
and hence small $\underline{\lambda}_j$ indicates that truly inactive variables in block $j$ are highly correlated with each other, whereas $\underline{\lambda}_j=1$ that they are uncorrelated. Similarly, $\bar{\lambda}$ relates to the correlation between active variables. 

\begin{prop}\label{prop:nectypeItypeII}
 \begin{enumerate}[label=(\roman*)]\tightlist
    \item If for some $j=1,\ldots,b$, $p_j-s_j\to\infty$ and $\lim_{n\to\infty}\kappa_j/(\underline{\lambda}_j\ln(p_j-s_j)) <1$, then 
     $\lim_{n \to \infty} \, \sup_{\bbeta^*\in\mathfrak{B}}\, P(\hat{S}^{ei}=S)= 0$.
    \item If for some $j=1,\ldots,b$, $\kappa_j\to\infty$  and
      $\lim_{n\to \infty}\sqrt{n\bar{\lambda}}\beta_{\min,j}^*-\sqrt{2\kappa_j} < \infty$ then there exists $\bbeta^*\in\mathfrak{B}$ such that $\lim_{n \to \infty} \, P(\hat{S}^{ei} = S) < 1$.
    \item If for some $j\in\{1,\ldots,b\}$,   $\lim_{n\to+\infty}(p_j-s_j)^{\underline{\lambda}_j}=+\infty$ and $\lim_{n\to\infty}\sqrt{n\bar{\lambda}}\beta_{\min,j}^*-\sqrt{2\underline{\lambda}_j\ln(p_j-s_j)} < \infty$, then there exists $\bbeta^*\in\mathfrak{B}$ such that $\lim_{n \to \infty} \,P(\hat{S}^{ei}=S) < 1$.
\end{enumerate}
\end{prop}

Proposition~\ref{prop:nectypeItypeII} implies that Assumptions \hyperlink{lab:A1}{A1}-\hyperlink{lab:A2}{A2} are fairly tight. By Part (i), the block penalty $\kappa_j$ must grow at least as fast as $\ln(p_j-s_j)$.
Since $\underline{\lambda}_j \in [0,1]$, in the worst case where $\underline{\lambda}_j=1$ Part (i) implies that it is necessary that 
$\kappa_j \geq \ln(p_j-s_j)$ 
nearly matching \hyperlink{lab:A1}{A1}. 
Part (ii) shows that the signal strength $\beta_{\min,j}^*$ must be sufficiently large relative to $\kappa_j$, and closely resembles the requirement that $\sqrt{(1-\gamma) n \rho(\bX) \beta_{\min,j}^*/6} - \sqrt{\kappa_j} - \sqrt{\ln(s_j)} \to \infty$ made by \hyperlink{lab:A2}{A2},
up to logarithmic term in $s_j$, and to replacing $\rho(\bX) (1 - \gamma)$ by $\bar{\lambda}$, where, recall, $1-\gamma \in (0,1/2)$ and it is bounded away from 0 if $\ln(p_j-s_j)=O(f_j)$ for all $j$.
Part (iii) follows by combining Parts (i)-(ii), and implies that $\beta_{\min,j}^*$ must satisfy
$\lim_{n\to\infty}\sqrt{n\bar{\lambda}}\beta_{\min,j}^*-\sqrt{ 2\underline{\lambda}_j\ln(p_j-s_j)}$,
which again resembles \hyperlink{lab:A2}{A2} in the worst case where $\underline{\lambda}_j=1$.
That is, Assumption~\hyperlink{lab:A2}{A2} and Parts (ii)-(iii) imply the same scalings of $(n,p_j,s_j,\beta_{\min,j}^*)$, up to logarithmic terms.

\subsection{Benefits of externally-informed penalties}\label{ssec:regression_gains}

This section compares the oracle-informed selector $\hat S^{ei}$ with the standard $\ell_0$ selector $\hat S^{sd}$, which possesses the best-known properties 
in terms of variable selection consistency \citep{gao2025optimalityl0}.
We highlight two types of improvements in $\hat S^{ei}$: 
(i) weaker conditions for variable selection consistency and (ii) faster convergence rates when both $\hat S^{ei}$ and $\hat S^{sd}$ are consistent.
Both improvements depend on how the blocks differ in their sparsity levels $s_j$ and signal strengths $\beta_{\min,j}^*$, that is on how informative the blocks truly are.

By Theorem~\ref{prop:fullrecovreg}, Assumptions~\hyperlink{lab:A1}{A1}–\hyperlink{lab:A2}{A2} are sufficient for consistent support recovery, both for $\hat S^{ei}$ and $\hat S^{sd}$ (as the particular case where $b=1$).
To compare $\hat S^{ei}$ and $\hat S^{sd}$, we plug in the smallest penalties $\kappa_j$  allowed by \hyperlink{lab:A1}{A1}–\hyperlink{lab:A2}{A2} and inspect the implied scaling of the sample size $n$.

Consider a high-dimensional setting where for all $j$, $p_j-s_j\to\infty$. For the informed selector $\hat S^{ei}$, with block penalties $\kappa_j \asymp \ln(p_j-s_j)$, consistency requires that
\begin{equation}\label{eq:rangeblockpenex}
\sqrt{n}
\;\gtrsim\;
\frac{\sqrt{\ln(p_j-s_j)}+\sqrt{\ln(s_j)}}{\sqrt{\rho(\bX)}\,\beta^*_{\min,j}}\quad\text{for each }j=1,\ldots,b.
\end{equation}
For the standard $\hat S^{sd}$, with a single penalty $\kappa \asymp \ln(p-s)$, consistency requires
\begin{equation}\label{eq:rangepenex}
\sqrt{n}
\;\gtrsim\;
\frac{\sqrt{\ln(p-s)}+\sqrt{\ln(s)}}{\sqrt{\rho(\bX)}\,\beta^*_{\min}}.
\end{equation}
Since $p_j-s_j \le p-s$, $s_j \le s$, and $\beta^*_{\min,j} \ge \beta^*_{\min}$ for all $j$, ~\eqref{eq:rangeblockpenex} is always weaker than ~\eqref{eq:rangepenex}.

The magnitude of this improvement depends on how strongly $(\beta_{\min,j}^*, s_j, p_j-s_j)$ vary across blocks, whether the blocks truly capture variables with different signal strengths or sparsity levels. To illustrate, we discuss next the requirement on signals strength implied by \eqref{eq:rangeblockpenex} and \eqref{eq:rangepenex}. That is,
for the informed selector $\hat S^{ei}$,
\begin{equation}\label{eq:betaminblockpenex}
\beta^*_{\min,j}
\;\gtrsim\;
\frac{\sqrt{\ln(p_j-s_j)}+\sqrt{\ln(s_j)}}{\sqrt{\rho(\bX)n}}\quad\text{for each }j=1,\ldots,b,
\end{equation}
and for the standard $\hat S^{sd}$,
\begin{equation}\label{eq:betaminpenex}
\beta^*_{\min}
\;\gtrsim\;
\frac{\sqrt{\ln(p-s)}+\sqrt{\ln(s)}}{\sqrt{\rho(\bX)n}}.
\end{equation}
The right-hand sides in \eqref{eq:betaminblockpenex} and \eqref{eq:betaminpenex} are smallest signal strength allowed by the sufficient assumptions for consistency (\hyperlink{lab:A1}{A1}–\hyperlink{lab:A2}{A2}) with $\hat S^{ei}$ and $\hat S^{sd}$ respectively. 
Consider a $b=2$ blocks example with $p-s=n^2$ inactive variables:
$p_1-s_1=n^{2}-n^{1/2}$ in block 1 and $p_2-s_2=n^{1/2}$ in block 2.
Suppose also that $s_1=s_2=O(1)$ and that the design is orthonormal, so that $\rho(\bX)=1$. For the standard selector $\hat S^{sd}$, consistency requires $\beta^*_{\min}\gtrsim\sqrt{2\ln n / n}$.
In contrast, for the informed selector $\hat S^{ei}$, consistency requires $\beta^*_{\min,1}\gtrsim\sqrt{2\ln n / n}$ in block~1, but only $\beta^*_{\min,2}\gtrsim\sqrt{\ln n / (2n)}$ in block~2. That is, $\hat S^{ei}$ can recover signals twice smaller than $\hat S^{sd}$ in block 2. Additional scenarios illustrating how gains depend on block sparsity and signal heterogeneity across blocks are provided in Supplement~\ref{app:exbenef}.

The comparison above is based on sufficient conditions.
Proposition~\ref{prop:nectypeItypeII}(iii) shows that a similar comparison holds for necessary conditions.
In particular, there exist regimes in which the sample size $n$ grows insufficiently quickly for consistent recovery using $\hat S^{sd}$, but sufficiently for $\hat S^{ei}$.
Corollary~\ref{cor:selconstimpossiblereg} formalizes this result.
Corollary~\ref{cor:selconstimpossiblereg} assumes a high-dimensional setting, where $\underline{\lambda} \ln(p-s)$ diverges, as in Proposition~\ref{prop:nectypeItypeII} (iii). 

\begin{cor}\label{cor:selconstimpossiblereg}
Assume \hyperlink{lab:A1}{A1}, \hyperlink{lab:A2}{A2} and $\lim_{n\to\infty}(p-s)^{\underline{\lambda}}=\infty$ for $\underline{\lambda}$ as in~\eqref{eq:ovpsk} for the $b=1$ block case. If
$\lim_{n\to\infty}\sqrt{n\bar{\lambda}}\beta_{\min}^*-\sqrt{2\underline{\lambda}\ln(p-s)}< \infty$,
then there exists $\bbeta^*\in\mathfrak{B}$ such that
$\lim_{n \to \infty} \, P(\hat{S}^{sd} = S) < 1$ and $\lim_{n \to \infty} \, \inf_{\bbeta^*\in\mathfrak{B}}P(\hat{S}^{ei} = S) = 1$.
\end{cor}
Supplement~\ref{app:exbenef} provides an illustration (simulation scenario 2).

We next compare the probabilities of correct selection when both procedures are consistent. Assume that block penalties are set to their oracle values $\kappa_j^*$ in Theorem \ref{prop:fullrecovreg}, and that the standard penalty is set to $\kappa^*$. Let $OR^{ei}$ and $OR^{sd}$ denote the corresponding oracle convergence rates in \eqref{eq:oraclerateconvreg}. Up to constant factors, their ratio satisfies
\begin{equation}\label{eq:fewkn}
\frac{OR^{ei}}{OR^{sd}}
=
\sum_{j=1}^b
\exp\!\left(
-\frac{\delta}{2}
\Big[
\frac{n\rho(\bX)(1-\gamma^*)}{24}
\big({\beta^*_{\min,j}}^2-{\beta_{\min}^*}^2\big)
+
\ln\!\frac{\max\{p-s,s\}}{\max\{p_j-s_j,s_j\}}
\Big]
\right),
\end{equation}
where $\delta$ can be taken arbitrarily close to~1.  Since $\beta^*_{\min,j}\ge\beta^*_{\min}$ and $\max\{p_j-s_j,s_j\}\le\max\{p-s,s\}$, it follows that $OR^{ei}=O(OR^{sd})$ for any block partition.
Moreover, if for every $j$, $\beta^*_{\min,j}>\beta^*_{\min}$ or $(p-s) / (p_j - s_j) \to \infty$, then $OR^{ei}=o(OR^{sd})$, implying faster convergence. Consider a sparse setting where $s_j < p_j - s_j$ for all $j$. There are then two sources of improvement in convergence rates: in any block $j$, a smaller number of truly zero parameters, that is $p_j-s_j< p-s$, or a larger signal, $\beta^*_{\min,j} > \beta_{\min}^*$. In blocks where $\beta^*_{\min,j} \approx \beta_{\min}^*$, improvement comes mainly from having a smaller number of truly zero means. In those blocks,
the oracle sets $\kappa^*_j < \kappa^*$, so that smaller signals are detected. If $\beta^*_{\min,j}$ is much greater than $\beta_{\min}^*$, the oracle sets $\kappa^*_j > \kappa^*$ and the probability of false positives is reduced. Further illustrations are provided in Supplement~\ref{app:exbenef}.

\subsection{Generalized convergence}\label{ssec:nobetaminhyp}

We now discuss a convergence result for $\hat S^{ei}$ that is a key ingredient for the data-driven procedures in Section~\ref{sec:informed_l0}. This result generalizes Theorem~\ref{theo:suffcondlinearmodel} by dropping the beta-min Assumption~\hyperlink{lab:A2}{A2}. Instead, active signals are separated into three categories---\emph{small}, \emph{intermediate}, and \emph{large}---relative to the penalties $\kappa_j$. We define next these three categories.

For fixed block penalties $\kappa_1,\ldots,\kappa_b$, and for $\gamma$ and $g_j$ as in Assumption~\hyperlink{lab:A2}{A2}, define
\begin{eqnarray}\label{eq:defSes}
    S_j^{L}(\kappa_j)
    &:=&
    \Big\{ i \in S_j \,\Big|\, \sqrt{\frac{(1-\gamma) n\rho(\bX)}{6}}\,|\beta_i^*|-\sqrt{\kappa_j}
    \;\geq\;\sqrt{\ln(s_j)} + g_j\Big\},\label{eq:defslsi}\\
    S_j^{S}(\kappa_j)
    &:=&
    \Big\{ i \in S_j \,\Big|\, \sqrt{n\bar{\lambda}}\,|\beta_i^*|=o\!\big(\sqrt{\kappa_j}\big) \Big\},\nonumber\\
    S_j^{I}(\kappa_j)
    &:=&
    S_j \setminus \big(S_j^{L}(\kappa_j) \cup S_j^{S}(\kappa_j)\big).\nonumber
\end{eqnarray}
where $\bar{\lambda}=\lambda_{\max}\Big(\bX_{S}^\top\bX_{S}/n\Big)$.
The set $S_j^{L}(\kappa_j)$ contains the large parameters, in that they meet the beta-min Assumption~\hyperlink{lab:A2}{A2}. The set $S_j^{S}(\kappa_j)$ contains small signals relative to the penalty $\kappa_j$. The remaining active variables form the intermediate set $S_j^{I}(\kappa_j)$.

Let $\bkappa=(\kappa_1,\ldots,\kappa_b)$ and define
$S^{L}(\bkappa)=\cup_{j=1}^b S_j^{L}(\kappa_j)$,
$S^{S}(\bkappa)=\cup_{j=1}^b S_j^{S}(\kappa_j)$, and
$S^{I}(\bkappa)=\cup_{j=1}^b S_j^{I}(\kappa_j)$.
Theorem \ref{thm:convtoT} shows that, under Assumption~\hyperlink{lab:A1}{A1}, the normalized scores $NC(M)$ in \eqref{eq:CM} concentrate on models that include all large signals, exclude all inactive variables and all small signals, and differ only in which intermediate variables they include:
\begin{equation} \label{eq:Tkappa}
\T(\bkappa)
\;:=\;
\Big\{M \in \M \,\Big|\,
S^{L}(\bkappa)\subseteq M \subseteq S^{L}(\bkappa)\cup S^{I}(\bkappa)
\Big\}.
\end{equation}
We require the following assumption.

\hypertarget{lab:A3}{(A3)} \quad For each block $j$, $|S_j^{I}(\kappa_j)|=O(1)$ and $|S_j^{S}(\kappa_j)|=O(p_j-s_j)$.

Assumption~\hyperlink{lab:A3}{A3} is mild and can be relaxed:  
$|S_j^{I}(\kappa_j)|$ can grow moderately with $p_j-s_j$ in our proof,
and one expects $|S_j^{I}(\kappa_j)|$ to be small because there is a relatively small gap between the ranges of $\beta_i^*$'s included in $S_j^S$ and $S_j^L$. 
Further, if $|S_j^{S}(\kappa_j)|/ (p_j-s_j) \to \infty$ then Theorem \ref{thm:convtoT} still holds, replacing, $p_j-s_j$ by $|S_j^{S}(\kappa_j)|$ in \hyperlink{lab:A1}{A1}.
\begin{thm}\label{thm:convtoT}
    Assume \hyperlink{lab:A1}{A1} and \hyperlink{lab:A3}{A3}, for every $j=1,\ldots,b$,  then, \\$\lim_{n \to \infty} \sum_{M \in \M\setminus \T(\bkappa)} \, \sup_{\bbeta^*\in\mathfrak{B}}\, \E\left(NC(M)\right)= 0 \mbox{  and  } 
    \lim_{n \to \infty} \, \inf_{\bbeta^*\in\mathfrak{B}}\, P(\hat{S}^{ei} \in \T(\bkappa))= 1.$
\end{thm}

\section{Data-based externally-informed penalties}\label{sec:informed_l0}

We now propose a data-driven method to set the block penalties $\kappa_1,\ldots,\kappa_b$ without relying on an oracle.
The main idea is to adapt $\kappa_1,\ldots,\kappa_b$ to the sparsity in each block. 
Our construction uses the Bayesian interpretation of $\ell_0$ penalties via the BIC approximation in~\eqref{eq:block_modelprob}.
This link suggests an empirical Bayes strategy for estimating, in each block $B_j$, the number active coefficients $s_j$.
Although the motivation is Bayesian, the procedure is fully data-based and does not require specifying a prior on $(\bbeta,M)$.

Recall that under the BIC approximation in~\eqref{eq:block_modelprob}, choosing $\kappa_j$ is equivalent to choosing a block-wise prior inclusion probability 
$\theta_j=(\exp(\kappa_j-\ln(n)/2)+1)^{-1}$,
which is 
a prior guess for $s_j/p_j$, the proportion of active coefficients in block $B_j$. A standard empirical Bayes strategy to set $\hat{\theta}_j$ is to maximize the marginal likelihood of $\by$ given $\btheta=(\theta_1,\ldots,\theta_b)$, i.e.
\begin{eqnarray}
\hat{\btheta}
\,:=\, \arg\max_{\btheta} p(\by \mid \btheta)=
\arg\max_{\btheta} \int p(\by \mid \bbeta, M) d P(\bbeta, M \mid \btheta).
\nonumber
\end{eqnarray}

Lemma~\ref{lem:empiricalbayesinterpret} shows that $\hat{\btheta}$ satisfies a fixed point equation, whereby the prior inclusion probability $\hat{\theta}_j$ for block $j$ equals the average posterior inclusion probability across variables in that block given $\hat{\btheta}$.
\begin{lem}\label{lem:empiricalbayesinterpret} 
Let $p(M \mid \btheta)= \prod_{i=1}^p \operatorname{Bern}\left(m_i ; \theta_j\right) I(i \in B_j)$ as in~\eqref{eq:block_modelprior} and $P(\beta_j \neq 0 \mid \by, \btheta)= \sum_{M\in\M: j\in M} P(M \mid \by,\btheta)$ be the posterior inclusion probability of variable $j$. Then $\hat{\btheta}$ satisfies 
\[\hat{\theta}_j= \frac{1}{p_j}\sum_{i\in B_j} P(\beta_i \neq 0 \mid \by, \hat{\btheta}).
\]
\end{lem}

Motivated by Lemma~\ref{lem:empiricalbayesinterpret} we estimate $s_j$, the number of active variables in block $B_j$ by the sum of pseudo-posterior inclusion probabilities,
computed using $NC(M)$ in \eqref{eq:CM} as a proxy for $P(M\mid \by,\btheta)$:
\begin{equation}\label{eq:shat}
    \hat{s}_j \,:=\, \sum_{i\in B_j}\,\,\sum_{M\in \M : i \in M}NC(M).
\end{equation}
We have the following asymptotic bounds on the expectation of $\hat{s}_j/p_j$. 
\begin{prop}
    \label{prop:shatconsistence}
Assume \hyperlink{lab:A1}{A1} and \hyperlink{lab:A3}{A3}. Then
\[ \,\, \frac{|S_j^{L}(\kappa_j)|}{p_j}\leq \lim_{n\to\infty}\E\bigg(\frac{\hat{s}_j}{p_j}\bigg)\leq \frac{s_j-|S_j^{S}(\kappa_j)|}{p_j}\quad\text{for all }j=1,\ldots,b \text{ and }\bbeta^*\in\mathfrak{B}.\]

\end{prop}
As a consequence, if the betamin condition in Assumption~\hyperlink{lab:A2}{A2} also holds, then $\hat{s}_j/p_j$ is an asymptotically unbiased
estimator of $s_j/p_j$ because $S_j=S_j^{L}(\kappa_j)$ and $S_j^{S}(\kappa_j)=\emptyset$ for all $j$. 
The proposition also shows that, when Assumption~\hyperlink{lab:A2}{A2} is not met, $\hat{s}_j/p_j$ is asymptotically downward biased by at least the proportion of small signals $|S_j^{S}(\kappa_j)|/p_j$, but it is guaranteed to be larger than the proportion of large signals $|S_j^L(\kappa)|/p_j$ (i.e., those satisfying \hyperlink{lab:A2}{A2}).

We propose a two-step procedure that uses $\hat{s}_j$ in~\eqref{eq:shat} to set adaptive block penalties that leverage the difference in sparsity across blocks. 
In the first step,  we use a single common penalty across blocks $\kappa^{\circ}$ and obtain the corresponding $\hat{s}_j$.
In the second step, we use $\hat{s}_j$ to set block penalties $\kappa_j^{A}= \ln(p_j-\hat{s}_j)+ \tfrac{1}{2}\ln(n)$.
This choice of penalties amounts to setting $\theta_j= (p_j-\hat{s}_j+1)^{-1}$ in the BIC approximation~\eqref{eq:block_modelprob}.
We briefly discuss alternative choices of $\theta_j$ at the end of the section.

\textbf{\hypertarget{lab:AlgoAIS}{Algorithm 1} - Adaptive informed selector $\hat{S}^{A,ei}$}
\begin{enumerate}\tightlist
    \item Set $\kappa_j=\kappa^{\circ}=\ln(p)+\frac{1}{2}\ln(n)$ 
    for $j=1,\ldots,b$. Compute $\hat{s}_j/p_j$ in~\eqref{eq:shat} for $j=1,\ldots,b$.
    \item Obtain $\hat{S}^{A,ei}$ solving~\eqref{eq:Mhat} with
    $\kappa_j^{A} = \ln(p_j-\hat{s}_j)+ \tfrac{1}{2}\ln(n)$.
\end{enumerate}

We refer to $\hat{S}^{A,ei}$ as the adaptive informed selector.  
Step 1 can be carried out with any Bayesian computational method that returns posterior inclusion probabilities $P(\beta_i \neq 0 \mid \by, \btheta)$. 
Step 2 can be done with any computational method to find the model with best informed $\ell_0$ criterion.
In our examples in Section~\ref{sec:examplesinforml0} we use MCMC for both steps. 

In the oracle analysis of Section~\ref{sec:properties_legression}, the selection properties depend on the number of inactive variables $p_j-s_j$ in each block. In practice, in Algorithm~1, Step~1 may fail to detect small signals, so $p_j - \hat{s}_j$ captures the number of variables that remain effectively indistinguishable from zero in Step~1. That is, $p_j-|S_j^{L}(\kappa^\circ)|$ where $S_j^{L}(\kappa^\circ)$ is the set of large signals as defined in~\eqref{eq:defSes}, evaluated at the Step~1 penalty $\kappa^\circ$.
This leads to a beta-min condition stated in terms of $p_j-|S_j^{L}(\kappa^\circ)|$ for $j=1,\ldots,b$,

\hypertarget{lab:A4}{(A4)}\quad For each block $j$, there exists $d_j\to\infty$, such that
\begin{equation*}
        \sqrt{\frac{ (1-\xi)n\rho(\bX)}{2}}\beta_{\min,j}^* \,- \,\sqrt{\ln\big(p_j-|S^{L}_j(\kappa^{\circ})|\big)+\frac12\ln(n)} \;=\; \sqrt{\ln(s_j)} + d_j,
    \end{equation*}
where $\xi=\tfrac12(1+\max_j \ln(p_j-s_j)/(\ln(p_j-s_j)+\tfrac 12 \ln(n)))$.

Under Assumptions~\hyperlink{lab:A3}{A3} and \hyperlink{lab:A4}{A4}, $\hat{S}^{A,ei}$ consistently recovers the true support $S$.

\begin{thm}\label{theo:informedl0consist}
    Assume that $\kappa^{\circ}$ satisfies \hyperlink{lab:A3}{A3} and that \hyperlink{lab:A4}{A4} holds. Then 
$\lim_{n \to \infty} \,\inf_{\bbeta^*\in\mathfrak{B}} P(\hat{S}^{A,ei} = S) = 1$.
\end{thm}

We compare next $\hat{S}^{A,ei}$ to its uninformed counterpart $\hat{S}^{A,sd}$ that sets in Step 2 a single common penalty $\kappa^{A}=\ln(p-\hat{s})+ \tfrac{1}{2}\ln(n)$. By Theorem~\ref{theo:informedl0consist}, $\hat{S}^{A,sd}$ is variable selection consistent under the betamin assumption
\begin{equation}\label{eq:betaminA}
        \sqrt{\frac{ (1-\xi')n\rho(\bX)}{2}}\beta_{\min}^* \,- \,\sqrt{\ln\big(p-|S^{L}(\kappa^{\circ})|\big)+\frac{\ln(n)}{2}} \;=\; \sqrt{\ln(s)} +  d,
    \end{equation}
where $\xi'=\tfrac12(1+\ln(p-s)/(\ln(p-s)+\tfrac12 \ln(n)))$ and $d\to \infty$. 
Since
$\ln(p-|S^{L}(\kappa^{\circ})|)\geq \ln(p_j-|S_j^{L}(\kappa^{\circ})|)$ and $\ln(s)\geq \ln(s_j)$, Assumption \hyperlink{lab:A4}{A4} for $\hat S^{A,ei}$ is milder than~\eqref{eq:betaminA} for $\hat S^{A,sd}$, in particular in less sparse blocks where $p_j-|S_j^{L}(\kappa^{\circ})|$ is smaller than $p-|S^{L}(\kappa^{\circ})|$. Although not shown here, necessary betamin assumptions are also milder for $\hat S^{A,ei}$ than for $\hat S^{A,sd}$.

A natural alternative is to set prior inclusion probabilities to $\theta_j=\hat{s}_j/p_j$, the empirical Bayes estimate of the proportion of active parameters in block $j$, which leads to setting $\kappa^{EB}_j=\ln(p/\hat{s}_j-1)$ in Step 2 of Algorithm 1.
The consistency of such a procedure can be proven under a mild additional assumption that bounds the number of active parameters. Compared to the uninformed counterpart with common penalty  $\kappa^{EB}=\ln(p/\hat{s}-1)$, the resulting informed selector requires milder conditions for consistency in denser blocks where $\hat s_j/p_j>\hat s/p$, and harsher conditions in the sparser blocks where $\hat s_j/p_j<\hat s/p$. For example, if the global $\beta_{\min}^*$ is located in a sparse block (small $\hat s_j/p_j$), then this alternative procedure results in low power relative to Algorithm 1.

\section{Structural transfer learning}\label{sec:TL}

We now study a structural transfer learning method built on the adaptive informed selector $\hat S^{A,ei}$.
We assume that for each source dataset $k=1,\ldots,K$ we are given a set of selected variables $\hat{S}^{(k)}$, and we want to use these sets to improve variable selection in a target dataset.
The hope is that the sources and target are similar regarding the \emph{support} (truly active variables),
in contrast to transfer learning for estimation and prediction~\citep{Li2021,Tian2023,abba2024} where similarity is expected in the parameter values.

A naive strategy would force the inclusion of any variable selected in at least one source dataset.
This can lead to \emph{negative transfer}: if a variable is truly active in a source dataset but not in the target dataset, one commits a type I error.
In the worst case, the set of truly active variables in the source and target may not overlap at all.
Instead, we use the source selections $\hat{S}^{(k)}$ to define  external information $\bz$ that partitions variables into blocks. In a first step, for each variable $i=1,\ldots,p$, we define $z_i= I (i\in \cup_{k=1}^K\hat{S}^{(k)})$. 
That is, variables in the target dataset are split into a first block containing those that were selected in $\geq 1$ source datasets, and a second block gathering the rest.
In a second step, we use the adaptive informed selector $\hat{S}^{A,ei}$ with that partition to select variables in the target dataset. This procedure provably prevents negative transfer (asymptotically).

\textbf{\hypertarget{lab:AlgoTL}{Algorithm 2} - Tran-s-ell0 (a simple two-block transfer rule)}
\begin{enumerate}[label=(\roman*)]\tightlist
    \item Construct $B_1= \cup_{k=1}^K\hat{S}^{(k)}$ and $B_2=V\setminus B_1$.
    \item Run \hyperlink{lab:AlgoAIS}{Algorithm 1} on the target dataset using the block partition $(B_1,B_2)$, and denote the output by $\hat{S}^{\rm tr}$.
\end{enumerate}

Alternative strategies that construct more than two blocks from the source selections are possible, e.g., by grouping variables by how many source datasets selected them. Our theory continues to apply once a block partition is fixed.
More flexible approaches regress inclusion probabilities on the full vector $(I\{i\in \hat S^{(1)}\},\ldots,I\{i\in \hat S^{(K)}\})$~\citep{vandeWiel2019}. Such approaches require a separate theoretical treatment and are left for future research.

The theoretical properties of our algorithm
follow directly from Theorem~\ref{theo:informedl0consist}, applied to the blocks defined using the $\hat{S}^{(k)}$'s. Let
\begin{equation}\label{eq:afihgr}
\begin{aligned}
   & s_1 = |B_1 \cap S| = |\cup_{k=1}^K\hat S^{(k)}\cap S|,\\
   & s_2 = |B_2 \cap S| = |S\setminus \cup_{k=1}^K  \hat S^{(k)}|,\\
   & p_1-s_1 = |B_1 \setminus S| = |(\cup_{k=1}^K\hat S^{(k)})\setminus S |,\\
   & p_2-s_2 = |B_2 \setminus S| = |V \setminus (\cup_{k=1}^K \hat S^{(k)}\cup S)|,\\
   & p_1-|S_1^L(\kappa^{\circ})| = |B_1 \setminus S^L(\kappa^{\circ})| = |(\cup_{k=1}^K\hat S^{(k)})\setminus S^L(\kappa^{\circ}) |,\\
   & p_2-|S_2^L(\kappa^{\circ})| = |B_2 \setminus S^L(\kappa^{\circ})| = |V \setminus (\cup_{k=1}^K \hat S^{(k)}\cup S^L(\kappa^{\circ}))|,\\
\end{aligned} 
\end{equation}
where $S^L(\kappa^{\circ})$ is the subset of large active signals in $S$ as defined in~\eqref{eq:defSes} for $\kappa^{\circ}$ as in Step 1 of \hyperlink{lab:AlgoAIS}{Algorithm 1}.  

Consider the betamin assumption\\
\hypertarget{lab:A5}{(A5)}\quad There exist $d_1,d_2\to \infty$ such that,
\begin{align*}
        &\sqrt{\frac{ (1-\xi)n\rho(\bX)}{6}}\beta_{\min,1}^*- \,\sqrt{\ln\big(|(\cup_{k=1}^K\hat S^{(k)})\setminus S^L(\kappa^{\circ})|\big)+\frac{\ln(n)}{2}}= \sqrt{\ln\big(|\cup_{k} S^{(k)}\cap S|\big)} + d_1,\\\text{ and,}\\
    &\sqrt{\frac{ (1-\xi)n\rho(\bX)}{6}}\beta_{\min,2}^*- \sqrt{\ln\big(|V \setminus (\cup_{k=1}^K \hat S^{(k)}\cup S^L(\kappa^{\circ}))|\big)+\frac{\ln(n)}{2}}= \sqrt{\ln\big(|S\setminus \cup_{k=1}^K  \hat S^{(k)}|\big)} + d_2,
\end{align*} where $\xi=\tfrac12(1+\max_j \ln(p_j-s_j)/(\ln(p_j-s_j)+\tfrac12\ln(n)))$.
Assumption~\hyperlink{lab:A5}{A5} is Assumption~\hyperlink{lab:A4}{A4}, plugging in the number of active ($s_j$) and inactive ($p_j-s_j$) parameters in~\eqref{eq:afihgr}.

\begin{cor}\label{cor:TLconsist}
    Assume that $\kappa^{\circ}$ satisfies \hyperlink{lab:A3}{A3} and that \hyperlink{lab:A5}{A5} holds. Then $\lim_{n \to \infty} \,\inf_{\bbeta^*\in\mathfrak{B}}P(\hat{S}^{tr} = S) = 1$.
\end{cor}
Corollary~\ref{cor:TLconsist} shows that Tran-s-ell0 is selection consistent under milder conditions than a standard, uninformed, $\ell_0$ selector $\hat S^{A,sd}$, which, recall, is consistent under~\eqref{eq:betaminA}. The larger the overlap between the set of truly active variables $S$ in the target dataset and those selected in the source datasets, $\cup_{k=1}^K\hat S^{(k)}$,
the milder \hyperlink{lab:A5}{A5} becomes relative to~\eqref{eq:betaminA}.
Corollary~\ref{cor:TLconsist} continues to apply even when $\cup_{k=1}^K\hat S^{(k)}$ provides a poor guess for $S$, making Tran-s-ell0 robust to negative transfer. In that case however, Trans-s-ell0 provides no advantage over standard $\ell_0$ criteria.

Remarkably, Tran-s-ell0 achieves gains under weaker requirements than transfer learning algorithms for parameter estimation and prediction. The latter either assume that the sets of variables are the same between source and target datasets \citep{abba2024}, or make strong assumptions on the similarity of design matrices \citep{Li2021,Tian2023}. This is required to avoid negative transfer because parameter estimates are highly sensitive to covariate correlation structures. The gains in estimation that are achievable through transfer learning were in fact shown to be inherently limited by the dissimilarity between source and target design matrices~\citep{liu2025}, an issue also known as the \textit{covariate shift} problem. Our results on structural transfer learning require no assumption on the similarity of design matrices: the variable sets in the source and target datasets may differ largely, even in size.

In addition, many (but not all) transfer learning algorithms for parameter estimation and prediction, e.g.~\cite{Jiang2016} and~\cite{Li2021}, require access to the source datasets. Tran-s-ell0 only requires sets of selected variables as input from the source. It is then more widely applicable and enjoys stronger privacy and communication efficiency guarantees. 

\section{Numerical illustrations}\label{sec:examplesinforml0}

We illustrate the performance of the adaptive informed selector $\hat{S}^{A,ei}$ (Algorithm~1) on simulated data under different levels of relevance of the external information (Section~\ref{sec:simulationsss}).
We then illustrate the performance of Tran-s-ell0 (Algorithm~2) on simulated and empirical data (Section~\ref{sec:TLexample}).
In Step~1 of Algorithm~1 (and hence also within Algorithm~2), we use $\kappa^{\circ}=\tfrac12\ln(n)+\ln(p)$ and
the MCMC algorithm in function \texttt{bestIC} in R package \texttt{mombf}. Briefly, each model is assigned a
pseudo-posterior probability given by the BIC approximation~\eqref{eq:dfbapon}, and an MCMC is used to sample from this distribution. 
In Step~2 of Algorithm~1, we obtain $\hat S^{A,ei}$ by re-scoring the models visited by the MCMC in Step~1 and selecting the best-scoring one. We obtain similarly, for comparison purposes, $\hat S^{A,sd}$, the uninformed counterpart of $\hat{S}^{A,ei}$ (using a single common penalty). We also compare $\hat S^{A,ei}$ and Tran-s-ell0 to the standard $\ell_0$ penalty EBIC~\citep{EBIC}, the LASSO~\citep{lasso} and SCAD~\citep{Fan01SCAD}. For the last two methods, the penalization parameter is chosen by cross-validation. The R code to reproduce our analyses is at 
\url{https://github.com/paulrognonvael/varsel.extdata}.

\subsection{Adaptive informed selector}\label{sec:simulationsss}

We simulate data from the linear model~\eqref{eq:linearmodel} with standard Gaussian errors ($\sigma^2=1$), sample size $n \in [20,700]$, and variable values drawn from a multivariate Gaussian with zero means, unit variances, and pairwise correlations $\mbox{cor}(x_{ij}, x_{il})= 0.5$ for $j \neq l$.

\begin{table*}[ht]
\caption{Sparsity and block assumptions in simulation scenarios 1--3}
\label{tab:examples_bis}
\begin{center}
\begin{tabular}[]{@{}cccccccc@{}}
\hline
Scenario & $p-s$ & $s$ & $s_1/(p_1-s_1)$ &$s_2/(p_2-s_2)$ & $\beta_{\min,1}^*$ & $\beta_{\min,2}^*=\beta_{\min}^*$ \\
\hline
$1$    & $3n/2$ & $3\ln(n)$ & $3\ln(n)/(3n-2\sqrt{n})$ & $3\ln(n)/(2\sqrt{n})$  & 0.5 & 0.33\\
$2$    & $n$ & $3\ln(n)$ & $3\ln(n)/(2n-2\ln(n))$ & $3/2$   & 0.5 & 0.33 \\
$3$    & $n$ & $3\ln(n)$ & $3\ln(n)/n$ & $3\ln(n)/n$  & 0.5 & 0.5\\
\hline
\end{tabular}
\end{center}
\end{table*}

Table~\ref{tab:examples_bis} summarizes our three simulation scenarios, which differ in the sparsity and the informativeness of the blocks. In Scenarios 1 and 2, the blocks are informative: block $B_1$ is sparser than $B_2$. In Scenario 1, the blocks are moderately informative, in that $[s_2/(p_2-s_2)]/[s_1/(p_1-s_1)]= O(\sqrt{n})$. In Scenario 2, the blocks are highly informative: $\bbeta^*$ is sparse overall, but it is non-sparse within block 2 ($s_2 > p_2-s_2)$. In Scenario 3, the blocks are non-informative: each
has half of the inactive parameters. In all scenarios, the nonzero $\beta_i^*$, $i\in S$, are set uniformly spaced in $(1/2, 1)$ except for the smallest parameter in each block, which takes the values $\beta_{\min,1}^*$ and $\beta_{\min,2}^*$ specified in the last two columns of Table~\ref{tab:examples_bis}.

\begin{figure}[ht]
\begin{tabular}{M{49mm}M{49mm}M{49mm}}
        Scenario 1& Scenario 2 & Scenario 3\\
         \includegraphics{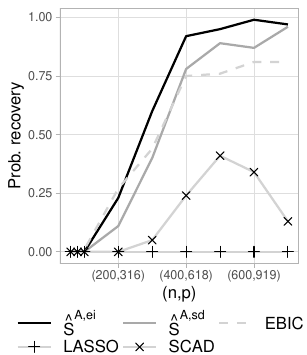} & \includegraphics{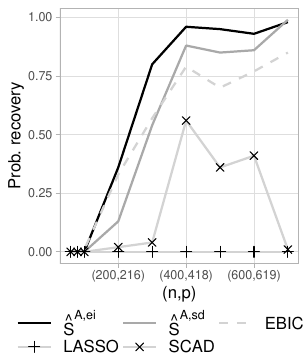} & \includegraphics{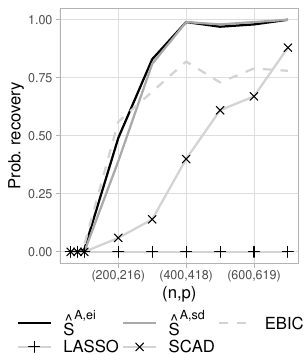}
\end{tabular}
\caption{Probability of correct selection with externally-informed $\hat S^{ei}$, non-informed $\hat S^{A,sd}$, EBIC, LASSO and SCAD in Scenario 1 (left), 2 (middle), 3 (right).}
\label{fig:linregprobrec}
\end{figure}

\begin{figure}[ht]
\begin{tabular}{M{49mm}M{49mm}M{49mm}}
         FDR& Power & MSE\\
         \includegraphics{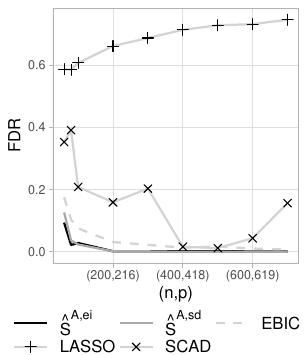}  & 
         \includegraphics{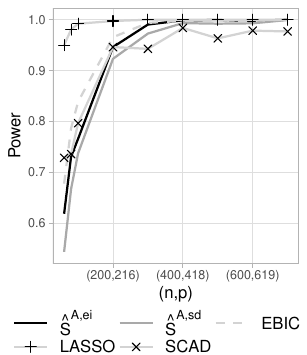} & 
         \includegraphics{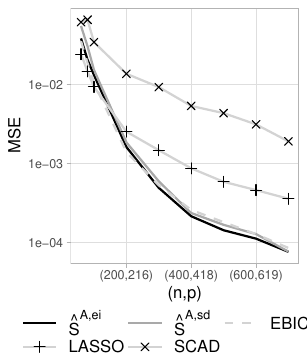} 
\end{tabular}
\caption{Simulation scenario 2. FDR (left), power (middle) and estimation MSE (right) with $\hat S^{A,ei}$, $\hat S^{A,sd}$, EBIC, LASSO and SCAD}
\label{fig:linregfdrpowmse}
\end{figure}

For each sample size, we generate 100 datasets and report in Figure~\ref{fig:linregprobrec} the empirical probability of correct recovery for $\hat S^{A,ei}$, $\hat S^{A,sd}$, EBIC, LASSO and SCAD. In Scenario 1 and 2 where blocks are discriminative, our externally-informed $\hat S^{A,ei}$ outperforms the non-informed $\hat S^{A,sd}$, and the EBIC.
In Scenario 3, where blocks are non-discriminative, $\hat S^{A,ei}$ performs similarly to $\hat S^{A,sd}$. LASSO and SCAD have significantly lower probability of recovery than the $\ell_0$ methods. The average false discovery rate (FDR) and power in Scenario 2, reported in Figure~\ref{fig:linregfdrpowmse}, shed light on these differences. Compared to the uninformed $\hat S^{A,sd}$, our informed $\hat S^{A,ei}$ has equally low or lower FDR and higher power. LASSO and EBIC have higher power than $\hat S^{A,ei}$ but higher FDR. In terms of mean squared error, $\hat S^{A,ei}$, $\hat S^{A,sd}$ and EBIC perform similarly, and better than SCAD and LASSO.
Similar findings are observed in Scenarios 1 and 3 
(Figure~\ref{fig:linregfdrpowmsesc13} in Section~\ref{app:fdrpowersim}). 
We also report in Figure~\ref{fig:runtime} in Section~\ref{app:fdrpowersim} the average computation time to obtain $\hat{S}^{A,sd}$ as a function of dimension $p$. In all three settings, the average time was below 20 seconds for all $(n,p)$.

\subsection{Structural transfer learning with Tran-s-ell0}\label{sec:TLexample}
\begin{figure}[ht]
\begin{tabular}{M{52mm}M{52mm}M{52mm}}
         Scenario 1, $h=0$& Scenario 2, $h=0.4$ & Scenario 3, $h=1$\\
         \includegraphics{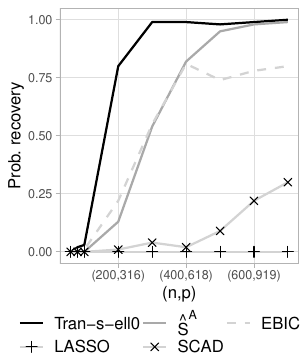} &          \includegraphics{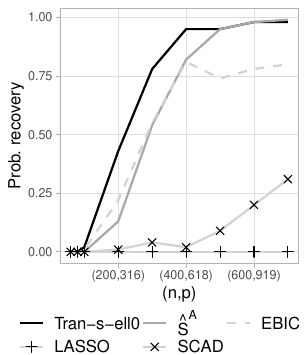} &      \includegraphics{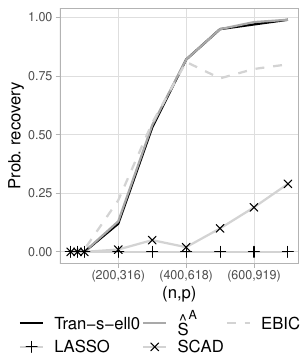} 
\end{tabular}
\caption{Probability of correct selection with Tran-s-ell0 algorithm, $\hat S^{A,sd}$, EBIC, LASSO and SCAD in Scenario 1 (left), 2 (middle), 3 (right).}
\label{fig:TLpr}
\end{figure}

We apply next the \hyperlink{lab:AlgoTL}{Tran-s-ell0} algorithm to simulated data. We consider target dataset sample sizes $n \in [20,700]$ and, for each sample size, obtain 100 simulations.
In each target dataset, the data-generating $\bbeta^*$ has $p-s=3n/2$ zero entries and $s=3\ln(n)$ non-zero entries uniformly spaced in $(1/3,1)$. For each target dataset, we generate two source datasets with sample size 700 and data-generating parameters that are equal to the target $\bbeta^*$, except that a proportion $h$ of the nonzero $\beta_i^*$'s are set to 0 and an equal number of the zero $\beta_i^*$'s are set to be uniformly spaced in $(1/3,1)$. 
We consider three scenarios. In Scenario 1, the truly active variables in the source and target sets are the same, that is $h=0$. In Scenario 2, there is a moderate overlap, with $h=0.4$. Scenario 3 is an example of completely irrelevant transfer where $h=1$. In the target datasets, variable values are drawn from a multivariate Gaussian with zero means, unit variances and pairwise correlations $\mbox{cor}(x_{ij}, x_{il})= 0.5$. In the source datasets, variable values are drawn from a multivariate t-distribution with 8 degrees of freedom and pairwise correlations $\mbox{cor}(x_{ij}, x_{il})= 0.5^{|i-j|}$.
We select variables in the source datasets with a standard $\ell_0$ penalty $\kappa=\tfrac12\ln(n)+\ln(p)$ and feed the selected sets to Tran-s-ell0. 

Figure~\ref{fig:TLpr} shows the average probability of recovery.
In Scenarios 1 and 2, \hyperlink{lab:AlgoTL}{Tran-s-ell0 algorithm} outperforms all other methods. In Scenario 3, Tran-s-ell0 performs similarly to $\hat S^{A,sd}$, better than LASSO and SCAD, slightly worse than EBIC for small $n$ but significantly better for large $n$.
These results support that Tran-s-ell0 is robust to negative transfer.

\subsection{Colon cancer data}

In molecular biology, it is common to use animal models (e.g. mice) to study genomic mechanisms. Findings from such studies often serve as a basis to study human diseases, for example, to assess whether genes that played key roles in the animal model are also important in humans. Specifically,~\cite{calon:2012} found a list of 172 genes that were associated with gene TGFB in mice, and showed that TGFB plays a crucial role in colon cancer progression. 
Our goal is to identify what genes within a set of $p=1000$ genes analyzed in~\cite{rossell:2017} are associated with TGFB in humans, using data from $n=262$ patients, by regressing TGFB expression on that of the $p=1000$ genes.
To this end, we run Tran-s-ell0 defining two blocks of variables: one with the 172 genes found in the mouse study, and another with the remaining genes. All variables are standardized to zero mean and unit variance. In this dataset, the runtime of Tran-s-ell0 was 49 seconds. 
\begin{table*}[ht]
\caption{Selected genes in each block}
\label{tab:selgenes}
\centering
\begin{tabular}{lC{0.1\linewidth}C{0.22\linewidth}C{0.1\linewidth}C{0.22\linewidth}}
\hline
 & \multicolumn{2}{c}{In mouse list} & \multicolumn{2}{c}{Outside of mouse list} \\ [3pt] \cline{2-5} 
 Method & \multicolumn{1}{C{0.15\linewidth}}{Number of discoveries} & Genes & \multicolumn{1}{C{0.15\linewidth}}{Number of discoveries} & Genes\\ 
\hline
Tran-s-ell0 & \multicolumn{1}{C{0.1\linewidth}}{4} & ESM1, GAS1, HIC1, CILP & \multicolumn{1}{C{0.1\linewidth}}{0} & - \\
\hline
$\hat S^{A,sd}$ & \multicolumn{1}{C{0.1\linewidth}}{2} & IGFBP3, CILP & \multicolumn{1}{C{0.1\linewidth}}{2} & LGALS1, ARHGEF1  \\
\hline
EBIC & \multicolumn{1}{C{0.1\linewidth}}{4} &  ESM1, GAS1, HIC1, CILP & \multicolumn{1}{C{0.1\linewidth}}{3} & KCNJ5-AS1, ZBTB46, OXER1 \\
\hline
LASSO& \multicolumn{1}{C{0.1\linewidth}}{22} & GAS1, HIC1, IGFBP3, CILP, DNAJB5, \ldots & \multicolumn{1}{C{0.1\linewidth}}{55} &  KCNJ5-AS1, ZBTB46, OXER1, LGALS1, \ldots \\
\hline
SCAD & \multicolumn{1}{C{0.1\linewidth}}{11} & GAS1, HIC1, IGFBP3, CILP, DNAJB5, \ldots & \multicolumn{1}{C{0.1\linewidth}}{5} &  POLQ, CRIP2, ZBTB46, VSIG4, LGALS1 \\
\hline
\end{tabular}
\end{table*}
\begin{table*}[ht]
\caption{Tran-s-ell0  vs EBIC: cross-validated MSE and pseudo-posterior probabilities}
\label{tab:cvmse.pseudoprob}
\centering
\begin{tabular}{lccccc}
\hline
 & & \multicolumn{4}{c}{Pseudo-posterior probabilities}  \\[3pt] \cline{3-6} 
 Method & CV-MSE & ESM1 & GAS1 & HIC1 & CILP\\ 
\hline
EBIC & 0.54& 0.55 & 0.88 & 0.81 & 0.95\\
\hline
Tran-s-ell0 & 0.49 & 0.74 & 0.90 & 0.95 & 0.99\\
\hline
\end{tabular}
\end{table*}

Table~\ref{tab:selgenes} reports the genes selected with Tran-s-ell0, $\hat S^{A,sd}$, EBIC, LASSO and SCAD. Compared to the uninformed selector $\hat{S}^{A,sd}$, Tran-s-ell0
selects more genes that were in the mouse list, and fewer genes from outside the list. LASSO and SCAD select a much larger number of genes than Tran-s-ell0, although recall that in our simulations in Section~\ref{sec:simulationsss} these methods exhibited high  false discovery rates (see also \cite{slope}). The extra discoveries of LASSO and SCAD should hence be considered with caution. Finally, the EBIC selects the same genes from the mouse list as Tran-s-ell0 but also three more genes from outside the list. Recall that in our simulations, EBIC had a slightly larger FDR than our informed selector but also a slightly larger power, for small sample sizes. 

To assess the relevance of the extra discoveries of EBIC and the quality of the models selected by EBIC and Tran-s-ell0, we report in Table~\ref{tab:cvmse.pseudoprob} the associated cross-validated mean squared error (CV-MSE) in estimation.
As a measure of variable relevance, we also report pseudo-posterior inclusion probabilities $\pi(\beta_i)=\sum_{M\in\M} NC(M)I(i\in M)$. 
Compared to the EBIC, Tran-s-ell0 leads to lower mean squared error and to higher pseudo-posterior inclusion probabilities for the genes in the mouse list. These findings suggest that there is value in structural transfer learning from mice to humans.

\section{Discussion}
We studied how 
variable selection in high-dimensional regression can be improved when one has external information that suggests that variables in some blocks are more likely than those in other blocks to be associated to the response. 
We first characterized the gains achievable by an oracle choice of block penalties, which serves as a benchmark for what external information can provide. 
We then proposed a data-based procedure that learns block penalties from the data and retains the main qualitative benefits of the oracle selector. 
As an important special case, we analyzed a structural transfer learning setting in which sets of variables selected in source studies are used as external information to guide selection in a target study, yielding a method that is robust to negative transfer.

Several directions remain open. 
First, it would be interesting to understand whether improvements can be obtained for estimation (not only support recovery) when such external information is available; while our simulations suggest potential gains, we did not develop estimation procedures or theory in this direction. 
Second, our results focus on linear regression. Extending the analysis to other structural learning problems—such as graphical model selection—could clarify when and how external information can relax the conditions required for consistent structure recovery.
Finally, it would be interesting to develop theory for cases where the external information does not simply split variables into blocks, but rather provides a richer description such as continuous measures of variable relevance found in several previous studies. 

\bibliography{references.bib}

\phantomsection\label{supplementary-material}
\bigskip

\begin{center}

{\large\bf SUPPLEMENTARY MATERIAL}

\end{center}

\renewcommand{\thesection}{S\arabic{section}}
\renewcommand{\theequation}{S\arabic{equation}}
\renewcommand{\thefigure}{S\arabic{figure}}
\renewcommand{\thetable}{S\arabic{table}}

\begin{description}
\item[\ref{app:exbenef} Examples of benefits of $\hat S^{ei}$:]
Illustrations in four examples of sparsity and informativeness of the blocks of the benefits of $\hat S^{ei}$ in oracle selection properties.
\item[\ref{app:fdrpowersim} Complementary material on numerical illustrations:]
Reports of false discovery rates, power, mean squared error and computation time in the numerical illustrations of the performance of $\hat S^{ei}$ and Tran-s-ell0.
\item[\ref{app:proofs} Proofs]
\end{description}

\setcounter{section}{0}
\setcounter{equation}{0}
\setcounter{figure}{0}
\setcounter{table}{0}

\section{Examples of benefits of $\hat S^{ei}$}\label{app:exbenef}

We illustrate here the benefits of $\hat S^{ei}$ over $\hat S^{sd}$ in four concrete examples contemplating different scenarios of sparsity regimes and informativeness of the blocks, summarized in Table~\ref{tab:examples}. We assume an orthonormal design such that $\rho(\bX)=1$ for simplicity and focus on a sparse setting where $\ln(s)=o(\ln(p-s))$, with two blocks ($b=2$). The smallest non-zero $|\beta_i^*|$, $\beta_{\min}^*$, is assumed to be located in block $2$, without loss of generality. 

\begin{table*}[ht]
\caption{Sparsity and block assumptions in Scenarios 1--4}
\label{tab:examples}
\begin{center}
\begin{tabular}[]{@{}cccccc@{}}
\hline
Scenario & $p-s$ & $p_1-s_1$ & $p_2-s_2$ & $s_1$ 
& $s_2$\\
\hline
$1$    & $3n/2$ & $3n/2-\sqrt{n}$ & $\sqrt{n}$  & $3\ln(n)/2$  & $3\ln(n)/2$ \\
$2$    & $\exp(n/16)$ & $\exp(n/16)-n^2$ & $n^2$  & $3\ln(n)/2$  & $3\ln(n)/2$   \\
$3$    & $n$ & $n-\ln(n)$ & $\ln(n)$  & $3\ln(n)/2$  & $3\ln(n)/2$   \\
$4$    & $n$ & $n/2$ & $n/2$  & $3\ln(n)/2$  & $3\ln(n)/2$\\
\hline
\end{tabular}
\end{center}
\end{table*}

\begin{figure}[ht]
\begin{tabular}{M{70mm}M{70mm}}
        Scenario 1& Scenario 2\\
         \includegraphics{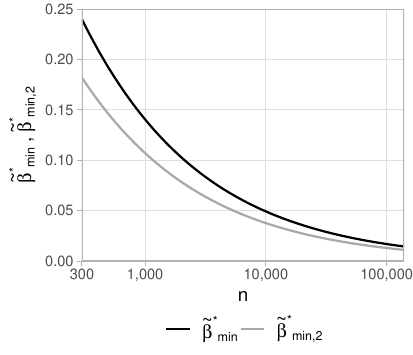} & \includegraphics{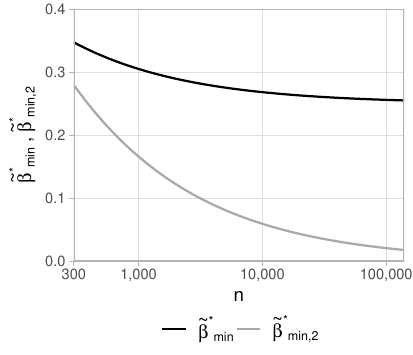} \\
        Scenario 3& Scenario 4\\
         \includegraphics{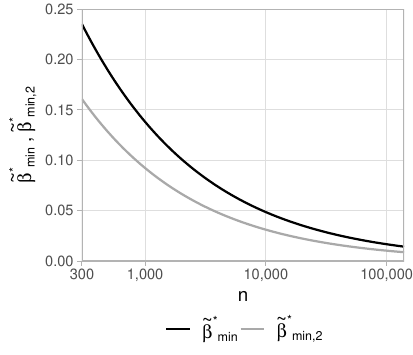} & \includegraphics{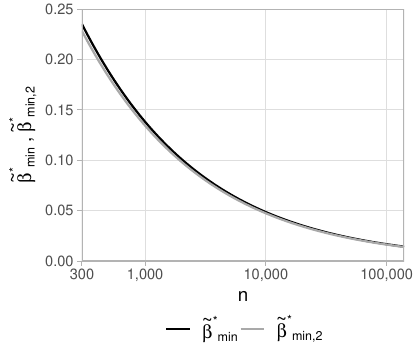} \\
\end{tabular}
\caption{Smallest signal strength $\tilde{\beta}^*_{\min,2}$ and $\tilde{\beta}^*_{\min}$ in Scenario 1 (top left), 2 (top right), 3 (bottom left), and 4 (bottom right) as in Table~\ref{tab:examples} (Section~\ref{app:exbenef})}
\label{fig:boundbetamin}
\end{figure}

\begin{figure}[ht]
\begin{center}
\includegraphics{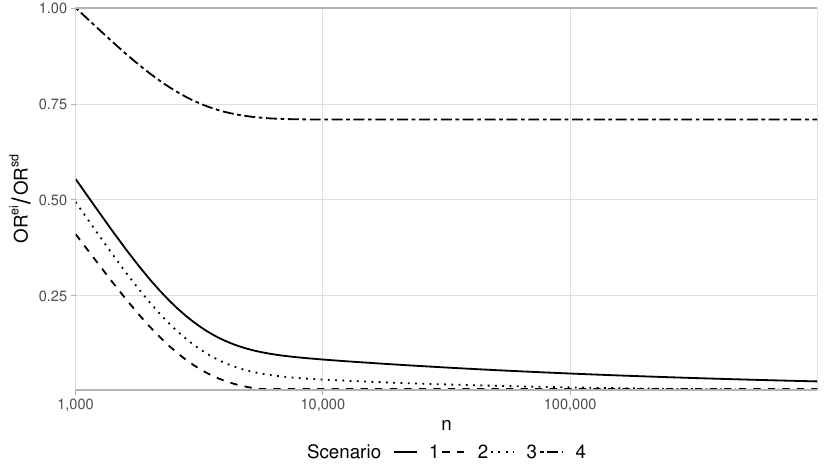}    
\end{center}
\caption{Ratio $OR^{ei}/OR^{sd}$ in Scenario 1 (solid), 2 (dashed), 3 (dotted), and 4 (two-dashed) as in Table~\ref{tab:examples} (Section~\ref{app:exbenef})}
\label{fig:ratioorconvrate}
\end{figure}

In Scenarios 1 to 3, blocks are informative as block $B_1$ is sparser than block $B_2$. In Scenario 1 the blocks are moderately informative, in that $[s_2/(p_2-s_2)]/[s_1/(p_1-s_1)]= O(\sqrt{n})$. In Scenario 2 they are highly discriminative, in that $[s_2/(p_2-s_2)]/[s_1/(p_1-s_1)]= O(\exp(n/16))$. In Scenario 3, they are highly discriminative in that $\bbeta^*$ is sparse overall, but it is non-sparse within block 2 ($s_2 > p_2-s_2)$. In Scenario 4, the block partition is non-informative of the distribution of inactive parameters: each block has half of them. 

Recall that the right-hand sides in \eqref{eq:betaminblockpenex} and \eqref{eq:betaminpenex} are the smallest signal strength allowed by the sufficient assumptions for consistency (\hyperlink{lab:A1}{A1}–\hyperlink{lab:A2}{A2}) with $\hat S^{ei}$ and $\hat S^{sd}$ respectively. Denote them $\tilde{\beta}^*_{\min,j}$ and $\tilde{\beta}^*_{\min}$ respectively, that is, for the informed selector $\hat S^{ei}$,
\begin{equation*}
\tilde{\beta}^*_{\min,j}
=
\frac{\sqrt{\ln(p_j-s_j)}+\sqrt{\ln(s_j)}}{\sqrt{\rho(\bX)n}}\quad\text{for each }j=1,\ldots,b,
\end{equation*}
and for the standard $\hat S^{sd}$,
\begin{equation*}
\tilde{\beta}^*_{\min}
=
\frac{\sqrt{\ln(p-s)}+\sqrt{\ln(s)}}{\sqrt{\rho(\bX)n}}.
\end{equation*}
Figure~\ref{fig:boundbetamin} plots $\tilde{\beta}^*_{\min,2}$ and $\tilde{\beta}^*_{\min}$ in Scenarios 1--4. The smallest signal strength allowed for $\hat S^{ei}$ in block 2 is smaller than that allowed for $\hat S^{sd}$, with the magnitude of the difference depending on the informativeness of the block partition. It is the largest in the highly informative Scenario 2 and essentially null in the non-informative Scenario 4. Scenario 2 also provides a straightforward example where incorporating external information makes possible consistent recovery. For any constant $\beta_{\min}^*=\beta_{\min,2}^*<0.25$, consistent recovery is impossible with $\hat S^{sd}$, but possible with $\hat S^{ei}$. 

We compare next the convergence rates associated with $\hat S^{ei}$ and $\hat S^{sd}$ for penalties chosen by an oracle, that is, ${OR}^{ei}$ and $OR^{sd}$ respectively (see Theorem~\ref{prop:fullrecovreg}). Figure~\ref{fig:ratioorconvrate} plots the ratio ${OR}^{ei}/OR^{sd}$ as given in~\eqref{eq:fewkn} in Scenario 1 to 4 where $\beta_{\min }^*=\beta_{\min,2}^*=1/2$ and  $\beta_{\min,1}^*=1.3\beta_{\min }^*$ to guarantee asymptotic recovery is possible in all four scenarios. Recall that $\gamma^*\in(1/2,1)$ and $\delta>0$ is arbitrarily close to 1. We set $\gamma^*=3/4$ and $\delta=0.99$ for illustration purposes. Changes in $\gamma^*$ and $\delta$ impact how much the ratios differ between scenarios but not their asymptotic behavior. In the informative Scenarios 1 to 3 convergence with $\hat S^{ei}$ is much faster than with $\hat S^{sd}$ and the ratios vanish. The vanishing is the fastest in Scenario 3, followed by 2 and 1. In Scenario 4, the ratio converges to a constant just below 0.75. In that scenario, where the partition is non-informative of the distribution of inactive parameters, with $\hat S^{ei}$, the oracle is still capable to leverage the difference in signal strength between blocks.

\section{Complementary material on numerical illustrations}\label{app:fdrpowersim}

\begin{figure}[ht]
\begin{tabular}{M{49mm}M{49mm}M{49mm}}
         Sc.1 FDR& Sc. 1 Power & Sc.1 Est. MSE\\
         \includegraphics{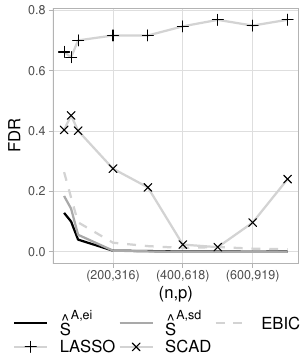}  & 
         \includegraphics{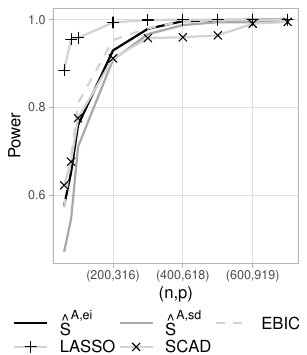} & 
         \includegraphics{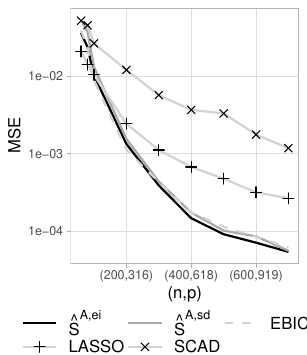} \\
         Sc.3 FDR& Sc.3 Power & Sc.3 Est. MSE\\
         \includegraphics{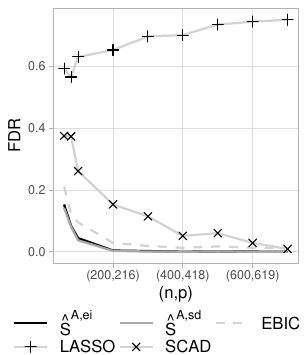}  & 
         \includegraphics{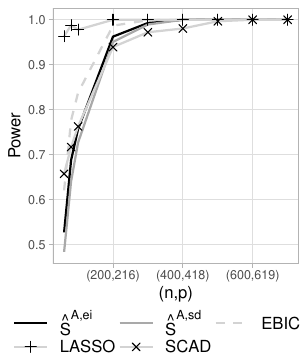} & 
         \includegraphics{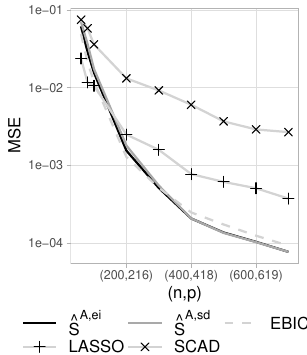} 
\end{tabular}
\caption{FDR (left), power (middle) and estimation MSE (right) with $\hat S^{A,ei}$, $\hat S^{A,sd}$, EBIC, LASSO and SCAD in Scenario 1 (top) and 3 (bottom) as in Table~\ref{tab:examples_bis} (Section~\ref{sec:simulationsss})}
\label{fig:linregfdrpowmsesc13}
\end{figure}

Figure~\ref{fig:linregfdrpowmsesc13} shows the average FDR, power and mean squared error in Scenario 1 and 3 of the simulations comparing the adaptive informed selector $\hat S^{A,ei}$ to other selection methods in Section~\ref{sec:simulationsss}. Recall, Scenario 1 is in informative block partition setting and Scenario 3 a non-informative block partition setting. See Table~\ref{tab:examples_bis} for a complete description of the scenarios. The results in Scenario 1 confirm the findings in Scenario 2 discussed in Section~\ref{sec:simulationsss}. That is, when the block partition is informative and compared to the uninformed $\hat S^{A,sd}$, $\hat S^{A,ei}$ has equal or lower FDR and higher power. LASSO and EBIC have higher power than $\hat S^{A,ei}$ but higher FDR. In terms of mean squared error, $\hat S^{A,ei}$, $\hat S^{A,sd}$ and EBIC perform similarly, and better than SCAD and LASSO. The results in Scenario 3 show that when the block partition is non-informative, $\hat S^{A,ei}$ and $\hat S^{A,sd}$ perform very similarly. 

\begin{figure}[ht]
\begin{tabular}{M{49mm}M{49mm}M{49mm}}
         Sc.1 Runtime & Sc. 2 Runtime & Sc.3 Runtime\\
         \includegraphics{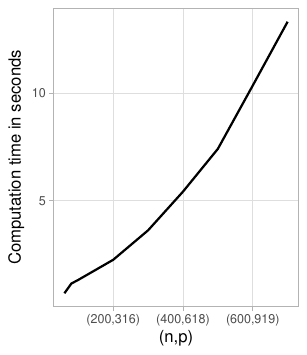}  & 
         \includegraphics{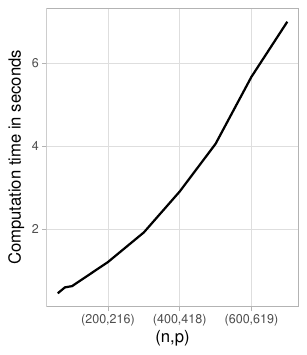} & 
         \includegraphics{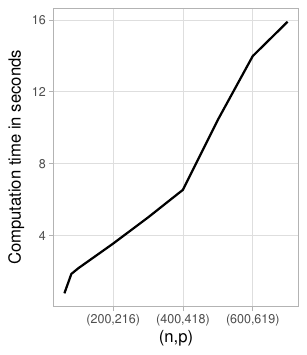} 
\end{tabular}
\caption{Average runtime of Algorithm 1 to obtain $\hat S^{A,ei}$ in Scenario 1 (left), 2 (middle) and 3 (right) as in Table~\ref{tab:examples_bis} (Section~\ref{sec:simulationsss})}
\label{fig:runtime}
\end{figure}

Figure~\ref{fig:runtime} shows the average runtime of Algorithm 1 to obtain $\hat S^{A,ei}$ in Scenario 1--3 as a function of $(n,p)$ (Windows laptop, Intel Core Ultra i7, 32Gb RAM). Runtime increased with the dimension but remained below 20 seconds in all the asymptotic regimes considered.

\begin{figure}[ht]
\begin{tabular}{M{49mm}M{49mm}M{49mm}}
         Sc.1 FDR& Sc. 1 Power & Sc.1 Est. MSE\\
         \includegraphics{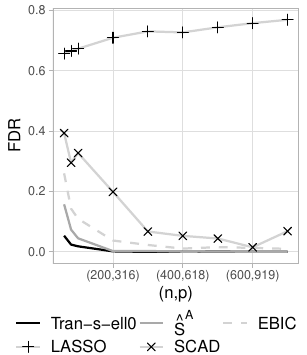}  & 
         \includegraphics{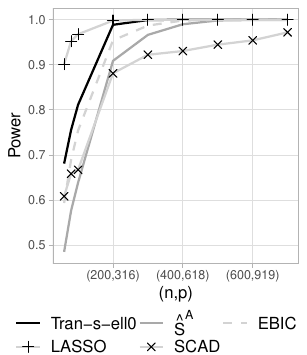} & 
         \includegraphics{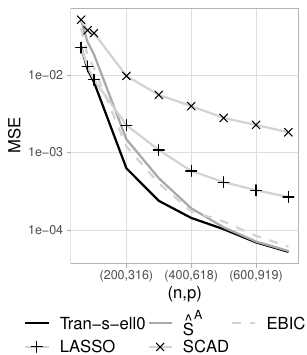} \\
         Sc.2 FDR& Sc.2 Power & Sc.2 Est. MSE\\
         \includegraphics{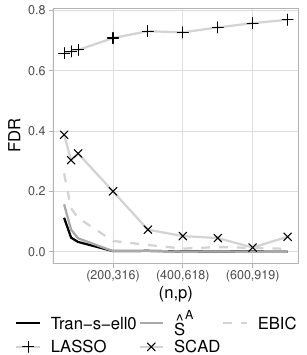}  & 
         \includegraphics{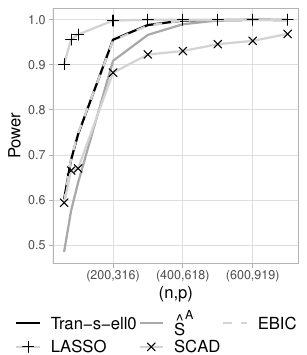} & 
         \includegraphics{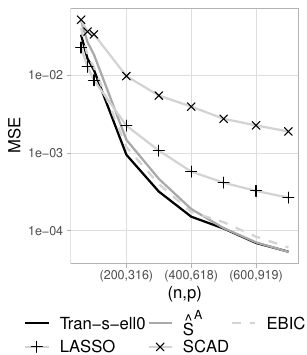}  \\
         Sc.3 FDR& Sc.3 Power & Sc.3 Est. MSE\\
         \includegraphics{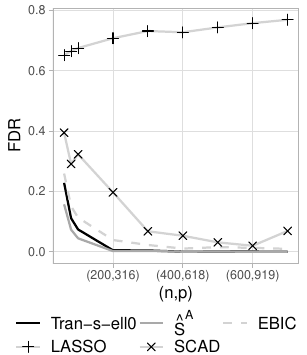}  & 
         \includegraphics{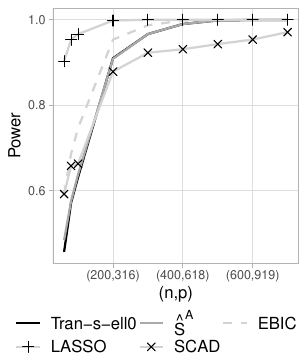} & 
         \includegraphics{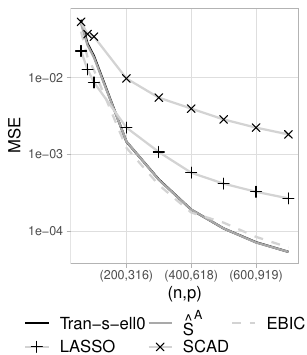} 
\end{tabular}
\caption{FDR (left), power (middle) and estimation MSE (right) with Tran-s-ell0, $\hat S^{A,sd}$, EBIC, LASSO and SCAD in Scenario 1 (top), 2 (middle) and 3 (bottom) as in Section~\ref{sec:TLexample}}
\label{fig:TLfdrpowmsesc123}
\end{figure}

Figure~\ref{fig:TLfdrpowmsesc123} shows the average FDR, power and mean squared error in Scenario 1 to 3 of the simulations comparing Trans-s-ell0 to other selection methods in Section~\ref{sec:TLexample}. Recall, Scenario 1 and 2 are examples of relevant transfer with respectively complete and moderate overlap of source and target supports. Scenario 3 is an example of irrelevant transfer with no overlap between source and target supports. Again, when the transfer is relevant and compared to the uninformed $\hat S^{A,sd}$, Trans-s-ell0 has equally low or lower FDR and higher power. Compared to EBIC, Trans-s-ell0 has higher power in Scenario 1 and similar power in Scenario 2 and has lower FDR in both scenarios. Compared to LASSO, Trans-s-ell0 has lower power but lower FDR in both Scenario 1 and 2. In terms of MSE, Trans-s-ell0 outperforms all the other selection methods in both Scenario 1 and 2. In Scenario 3, Trans-s-ell0 performs very similarly to $S^{A,sd}$ except it has slightly more FDR for small sample sizes.

\section{Proofs}\label{app:proofs}

Section~\ref{suppsec:auxiliary} presents results auxiliary to the rest of the proofs, as well as the proofs of that auxiliary results. Section~\ref{suppsec:proofseclinreg} the proofs of Section~\ref{sec:properties_legression} and Section~\ref{suppsec:proofsempbayes} the proofs of Sections~\ref{sec:informed_l0} and~\ref{sec:TL}. 

\subsection{Auxiliary results}\label{suppsec:auxiliary}

In this section we collect some technical results auxiliary to the rest of the proofs.

\subsubsection{Tail bounds}

\begin{lem}\label{lemma:3tailbounds}
If $y\sim N(\mu,\sigma^2)$ and $a\leq|\mu|$, $$P(|y|>a)  = P\left(z<\frac{|\mu|-a}{\sigma}\right) + P\left(z<-\frac{a+|\mu|}{\sigma}\right)$$
\end{lem}

\begin{proof} We have
$$P(|y|>a) = P(y>a) + P(y<-a) = P\Big(z>\frac{a-\mu}{\sigma}\Big) + P\Big(z<-\frac{a+\mu}{\sigma}\Big)$$
where $z\sim N(0,1)$. If $\mu>0$, $P(z>\frac{a-\mu}{\sigma})  =  P(z>\frac{a-|\mu|}{\sigma})= P(z<\frac{|\mu|-a}{\sigma})$ by symmetry of the standard Gaussian, and $P(z<-\frac{a+\mu}{\sigma})=P(z<-\frac{a+|\mu|}{\sigma})$, then $P(|y_i|>a) =  P(z<\frac{|\mu|-a}{\sigma}) + P(z<-\frac{a+|\mu|}{\sigma})$.
If $\mu\leq0$, then $P(z>\frac{a-\mu}{\sigma})=P(z>\frac{a+|\mu|}{\sigma})=P(z<-\frac{a+|\mu|}{\sigma})$, by symmetry of the standard Gaussian, and $P(z<-\frac{a+\mu}{\sigma})=P(z<\frac{|\mu|-a}{\sigma})$. Then for any $\mu$, $P(|y_i|>a)  = P(z<\frac{|\mu|-a}{\sigma}) + P(z<-\frac{a+|\mu|}{\sigma}) $.
\end{proof}

\begin{lem}\label{lemma:s1s3}
Let $W\sim\chi_{\nu}^2(\mu)$ with $\mu \geq 0$, then for any $w>\mu+\nu$
$$P(W > w) \leq e^{-\big(\frac{w+\mu}{2}  - \sqrt{2w( 2 \mu+ \nu)-2 \mu \nu - \nu^2 } \big)}. $$
Moreover, assume $w$, $\nu$ and $\mu$ are functions of $n$ such that $w$ is increasing, $\nu=o(w)$, and $\mu=o(w)$. Then, for any $\phi\in(0,1)$ and $n$ large enough
$$P(W > w)  \leq  e^{-\phi\frac{w}{2}}.$$
\end{lem}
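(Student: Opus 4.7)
\textbf{Proof plan for Lemma~\ref{lemma:s1s3}.}

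My plan is to derive the first inequality from a Chernoff bound on the noncentral chi--squared moment generating function, and then recover the second inequality by asymptotic expansion. Recall that $W\sim \chi_\nu^2(\mu)$ has MGF
\[
E[e^{tW}]\;=\;(1-2t)^{-\nu/2}\exp\!\Big(\tfrac{\mu t}{1-2t}\Big),\qquad t\in(0,\tfrac12).
\]
For any such $t$, Markov's inequality yields
\[
P(W>w)\;\leq\;\exp\!\Big(-tw-\tfrac{\nu}{2}\ln(1-2t)+\tfrac{\mu t}{1-2t}\Big).
\]
Applying the standard estimate $-\ln(1-u)\leq u+u^{2}/(2(1-u))$ at $u=2t$ and rearranging gives the Birg\'e (2001) concentration bound
\[
P\!\big(W\geq \nu+\mu+2\sqrt{(\nu+2\mu)x}+2x\big)\;\leq\;e^{-x}\qquad \text{for all }x>0.
\]
This inequality is the workhorse behind the statement we want.

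The first inequality then reduces to a change of variables: given $w>\mu+\nu$, I solve $w=\nu+\mu+2\sqrt{(\nu+2\mu)x}+2x$ for $x$. This is a quadratic in $\sqrt{x}$ whose positive root is
\[
x\;=\;\tfrac{1}{2}\bigl(w+\mu-\sqrt{(\nu+2\mu)(2w-\nu)}\bigr)\;=\;\tfrac12\bigl(w+\mu-\sqrt{2w(2\mu+\nu)-2\mu\nu-\nu^{2}}\bigr),
\]
and it is strictly positive precisely because $w>\mu+\nu$. Substituting into Birg\'e's inequality produces a tail bound of the same shape as claimed (indeed, slightly tighter: the bound I obtain has a $\tfrac12$ in front of the square root, so the lemma's looser version follows immediately, as subtracting a larger quantity only worsens the exponent).

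For the asymptotic part, I use that $\mu=o(w)$ and $\nu=o(w)$ imply
\[
(\nu+2\mu)(2w-\nu)\;=\;o(w^{2})
\qquad\text{hence}\qquad
\sqrt{2w(2\mu+\nu)-2\mu\nu-\nu^{2}}\;=\;o(w).
\]
Combined with $\mu=o(w)$, the exponent $\tfrac{w+\mu}{2}-\sqrt{2w(2\mu+\nu)-2\mu\nu-\nu^{2}}$ equals $\tfrac{w}{2}(1+o(1))$. Therefore, for any fixed $\phi\in(0,1)$, this exponent exceeds $\phi\,w/2$ once $n$ is large enough, giving $P(W>w)\leq e^{-\phi w/2}$ as desired.

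The main obstacle I foresee is not the algebra of the change of variables nor the asymptotic expansion, both of which are routine, but the derivation of Birg\'e's clean bound $P(W\geq \nu+\mu+2\sqrt{(\nu+2\mu)x}+2x)\leq e^{-x}$. If I cite it, the rest is bookkeeping; if I instead derive it in-house, I must choose a near-optimal tilt $t$ in the Chernoff bound and carefully bound the cumulant $-\tfrac{\nu}{2}\ln(1-2t)+\tfrac{\mu t}{1-2t}$ by an expression that factors through the quadratic $(\nu+2\mu)x+x^{2}/\,\cdots$, which requires some care with the range of $t$ ensuring positivity and convergence of the MGF.
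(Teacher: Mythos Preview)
Your proposal is correct and matches the paper's proof essentially step for step: the paper cites Birg\'e's bound $P(W\geq \nu+\mu+2\sqrt{(\nu+2\mu)x}+2x)\leq e^{-x}$ directly, inverts the monotone map $x\mapsto \nu+\mu+2\sqrt{(\nu+2\mu)x}+2x$ to solve for $x$ in terms of $w$, and then expands the exponent as $\tfrac{w}{2}(1+o(1))$ under $\nu=o(w)$, $\mu=o(w)$. Your observation about the factor $\tfrac12$ in front of the square root is accurate---the exact inverse carries a $\tfrac12$ and the lemma's stated bound (without it) is simply a looser but valid consequence; the paper glosses over this and writes the weaker exponent as if it were $f^{-1}(w)$.
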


\begin{proof}
By~\cite{birge}, Lemma 8.1 we have that for any $x>0$
$$P(W> (\nu+\mu)+2 \sqrt{(\nu+2 \mu) x}+2 x) \leq e^{-x}.$$
The function $f:x\mapsto (\nu+\mu)+2 \sqrt{(\nu+2 \mu) x}+2 x$ is  one-to-one between $\R^+$ and $( \nu+
\mu,\infty)$. 
Hence, we have that for any $w>\mu+\nu$,
$$P(W > w) \leq e^{-f^{-1}(w)} \:=\; e^{-\big(\frac{w+\mu}{2}  - \frac{1}{2}\sqrt{2w(2 \mu+\nu)-2 \mu \nu + \nu^2 }\big) } .$$
Observe that
$$\frac{w+\mu}{2}  - \frac{1}{2}\sqrt{2w( 2 \mu+ \nu)-2 \mu \nu - \nu^2 }= \frac{w}{2}\bigg(1+\frac{\mu}{w} -\frac{1}{2}\sqrt{\frac{8(2 \mu+\nu)}{w}\bigg(1-\frac{2\nu\mu-\nu^2}{2(2w\mu+w\nu)} \bigg)}\bigg).$$
 Since $\nu=o(w)$ and $\mu=o(w)$ by assumption, we have $\frac{w+\mu}{2}  - {2}^{-1}\sqrt{2w( 2 \mu+ \nu)-2 \mu \nu - \nu^2 }= \frac{w}{2}(1+o(1))$. Therefore, for any $\phi\in(0,1)$ and every $n$ large enough.
$$P(W > w) \leq  e^{-\phi\frac{w}{2}}.$$
\end{proof}

\begin{lem}\label{lemma:s2} Let $W \sim \chi_\nu^2(\mu)$ with $\mu>0$. For any $w<\mu$, 
$$
P(W<w) \leq \frac{e^{-\frac{1}{2}(\sqrt{\mu}-\sqrt{w})^2}}{(\mu / w)^{\nu / 4}}.$$
\end{lem}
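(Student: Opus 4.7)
The strategy will be a standard Chernoff-style argument applied to the lower tail of a non-central chi-squared, with a carefully chosen tilt parameter that makes the two factors in the bound line up with the stated form.

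First I would write down the moment generating function of $W\sim\chi_\nu^2(\mu)$,
\[
\E[e^{tW}]\;=\;\frac{1}{(1-2t)^{\nu/2}}\exp\!\left(\frac{\mu t}{1-2t}\right),\qquad t<\tfrac12,
\]
and apply Markov's inequality for lower tails: for any $t<0$, $\{W\leq w\}=\{e^{tW}\geq e^{tw}\}$, so
\[
P(W\leq w)\;\leq\;e^{-tw}\cdot\frac{1}{(1-2t)^{\nu/2}}\exp\!\left(\frac{\mu t}{1-2t}\right).
\]
This gives a one-parameter family of upper bounds indexed by $t<0$.

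Next, rather than optimizing by calculus and dealing with the resulting quadratic, I would just guess the tilt $t^\star=\tfrac12(1-\sqrt{\mu/w})$, which is negative since $w<\mu$. With this choice $1-2t^\star=\sqrt{\mu/w}$, so $(1-2t^\star)^{\nu/2}=(\mu/w)^{\nu/4}$, producing the denominator in the target bound exactly. For the exponent, direct substitution gives
\[
-t^\star w\;=\;\tfrac12(\sqrt{\mu w}-w),\qquad \frac{\mu t^\star}{1-2t^\star}\;=\;\tfrac12(\sqrt{\mu w}-\mu),
\]
and adding these yields $\tfrac12(2\sqrt{\mu w}-w-\mu)=-\tfrac12(\sqrt{\mu}-\sqrt{w})^2$. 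Plugging back in gives exactly
\[
P(W\leq w)\;\leq\;\frac{e^{-\tfrac12(\sqrt{\mu}-\sqrt{w})^2}}{(\mu/w)^{\nu/4}},
\]
as required.

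There is no real obstacle here beyond noticing that the guess $t^\star=(1-\sqrt{\mu/w})/2$ is what makes the exponent collapse into the clean square $(\sqrt{\mu}-\sqrt{w})^2/2$; one can verify after the fact that this $t^\star$ is also the true minimizer of the Chernoff bound (set the derivative of the log-bound in $s=-2t$ to zero and solve), but for the proof it is enough to verify that $t^\star<0$ and $t^\star<1/2$ so that the MGF is finite and the Markov step is valid, both of which follow immediately from $0<w<\mu$.
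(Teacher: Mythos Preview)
Your Chernoff argument is correct: the MGF is valid for all $t<\tfrac12$, the choice $t^\star=\tfrac12(1-\sqrt{\mu/w})$ is negative exactly when $w<\mu$, and the algebra collapsing the exponent to $-\tfrac12(\sqrt{\mu}-\sqrt{w})^2$ and the prefactor to $(\mu/w)^{-\nu/4}$ checks out line by line.

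The paper does not actually prove this lemma; it simply cites Lemma~S2 of \cite{rossell2022concentration} and stops. Your self-contained derivation therefore supplies strictly more than the paper does here, and the Chernoff-with-tilt approach is almost certainly what the cited reference does as well, since it is the standard route to this bound.
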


\begin{proof}
The result follows directly from~\cite{rossell2022concentration}, Lemma S2.
\end{proof}

\begin{lem}\label{lemma:s18} 
    Let $W\sim \chi_\eta^2(\mu)$ with $\mu \geq 0$. Assume that $h$, $c$, $\eta$ and $\mu$ are functions of $n$ such that $h$ is positive and increasing, $c$ is positive with $\lim_{n\to\infty}c=l \in(0,\infty)$, $\eta=o(c\ln(h))$, and $\mu=o(c\ln(h))$. Let $\bar{u},\underline{u}$ in $(0,1)$ such that $1>\bar{u}>\underline{u}\geq\left(1+h^{\frac{\psi c}{2}}e^{-(\eta+\mu)}\right)^{-1}$ where $\psi\in(0,\min\{1,2/l\})$, then for every $n$ large enough, we have 
$$
\int_{\underline{u}}^{\bar{u}} P\left(W>c\ln \left(\frac{h}{1 / u-1}\right)\right) d u \leq  \frac{1}{h^{\frac{\psi c}{2}}}\Big(\bar{u}-\underline{u}+\ln\Big(\frac{\bar{u}}{\underline{u}}\Big)\Big).
$$
\end{lem}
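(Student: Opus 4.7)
My plan is to apply the chi-squared tail bound of Lemma~\ref{lemma:s1s3} uniformly across the integration domain, and then integrate a simple upper bound. First I would introduce the shorthand $w(u):=2\ln(g/(1/u-1)) = 2\ln g + 2\ln(u/(1-u))$, so the integrand $P(W > w(u))$ is a decreasing function of $u$ on $(0,1)$. The proof then has two pieces: (i) establish a uniform lower bound on $w(u)$ for $u \in [\underline u, \bar u]$ that makes the hypotheses of Lemma~\ref{lemma:s1s3} applicable with a constant $\phi'$ slightly larger than $\phi$, and (ii) integrate the resulting bound.

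For piece (i), I would translate the hypothesis $\underline u \geq (1+g^{\phi}e^{-(\nu+\mu)/2})^{-1}$ into a lower bound on $w(\underline u)$: rearranging gives $1/\underline u - 1 \leq g^{\phi}e^{-(\nu+\mu)/2}$, hence $g/(1/\underline u - 1) \geq g^{1-\phi}e^{(\nu+\mu)/2}$, and therefore
\[
w(u) \;\geq\; w(\underline u) \;\geq\; 2(1-\phi)\ln g + (\nu+\mu) \qquad \text{for every } u \in [\underline u, \bar u].
\]
Since $g\to\infty$ while $\nu,\mu = o(\ln g)$, this lower bound diverges and $\mu/w(u), \nu/w(u) \to 0$ uniformly in $u$. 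Hence, picking any $\phi' \in (\phi, 1)$, the second part of Lemma~\ref{lemma:s1s3} applies uniformly, yielding for all sufficiently large $n$ and every $u \in [\underline u, \bar u]$:
\[
P(W > w(u)) \;\leq\; e^{-\phi' w(u)/2} \;=\; g^{-\phi'} \Big(\tfrac{1-u}{u}\Big)^{\phi'}.
\]

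For piece (ii), note $g \geq 1$ eventually, so $g^{-\phi'} \leq g^{-\phi}$. Because $1-u < 1$ and $\phi' < 1$ with $1/u \geq 1$, we have $\big((1-u)/u\big)^{\phi'} \leq (1/u)^{\phi'} \leq 1/u$. Integrating gives
\[
\int_{\underline u}^{\bar u} P(W > w(u))\, du \;\leq\; g^{-\phi} \int_{\underline u}^{\bar u} \frac{du}{u} \;=\; g^{-\phi}\,\ln(\bar u/\underline u) \;\leq\; g^{-\phi}\bigl(\bar u - \underline u + \ln(\bar u/\underline u)\bigr),
\]
where the last inequality holds because $\bar u > \underline u$. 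This yields the claim, in fact slightly stronger than stated.

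The only delicate point I anticipate is justifying the \emph{uniformity} in $u$ when invoking Lemma~\ref{lemma:s1s3}, because $\underline u$ depends on $n$. This reduces to checking that the explicit lower bound on $w(u)$ makes $\mu/w(u) \to 0$ and $\nu/w(u) \to 0$ uniformly in $u$, which is immediate from $w(u) \geq 2(1-\phi)\ln g + (\nu+\mu)$ together with $\nu, \mu = o(\ln g)$. I do not foresee any substantive obstacle beyond this bookkeeping.
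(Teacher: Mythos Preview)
Your proof is correct and follows the paper's approach closely in the first step: both establish the uniform lower bound $w(u)\geq 2(1-\phi)\ln g+(\nu+\mu)$ from the hypothesis on $\underline u$, verify $\nu,\mu=o(w(u))$ uniformly in $u$, and apply Lemma~\ref{lemma:s1s3} to obtain $P(W>w(u))\leq g^{-\phi}(1/u-1)^{\phi}$ (your detour through $\phi'>\phi$ is harmless but unnecessary---the same $\phi$ works directly).

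The genuine difference is in how the resulting integral $\int_{\underline u}^{\bar u}(1/u-1)^{\phi}\,du$ is handled. The paper substitutes $v=1/u-1$ and uses the Bernoulli-type inequality $(1+x)^r\leq 1+rx$ to bound $v^{\phi}\leq 1+\phi(v+1)$, arriving at $\bar u-\underline u+\phi\ln(\bar u/\underline u)$. Your route is more elementary: you simply bound $((1-u)/u)^{\phi}\leq 1/u$ (using $(1-u)^{\phi}<1$ and $(1/u)^{\phi}\leq 1/u$ for $u\in(0,1)$, $\phi\leq 1$) and integrate to $\ln(\bar u/\underline u)$, then add the nonnegative term $\bar u-\underline u$ to match the stated form. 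Your argument is shorter and in fact yields the sharper bound $g^{-\phi}\ln(\bar u/\underline u)$; the paper's version retains the $\bar u-\underline u$ term organically, which is convenient when this lemma feeds into Lemma~\ref{lemma:s20}, but is not strictly needed.
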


\begin{proof}
For any $u\in [\underline{u},\bar{u}]$, we have $c\ln \left(\frac{h}{1 / u-1}\right) \geq c\ln \left(\frac{h}{1 / \underline{u}-1}\right)$. Since $\underline{u}\geq (1+h^{\frac{\psi c}{2}}e^{-(\eta+\mu)})^{-1}$ by assumption  we also have that, for any $u\in (\underline{u},\bar{u})$,
$$c\ln \left(\frac{h}{1 / u-1}\right) \geq c\big(1-\frac{\psi c}{2}\big)\ln(h)+c(\eta+\mu).$$
It follows,
$$\frac{\eta}{c\ln \left(\frac{h}{1 / u-1}\right)}\leq \frac{\eta}{c\big(1-\frac{\psi c}{2}\big)\ln(h)+c(\eta+\mu)}
\quad\text{and}\quad
\frac{\mu}{c\ln \left(\frac{h}{1 / u-1}\right)}\leq\frac{\mu}{ c\big(1-\frac{\psi c}{2}\big)\ln(h)+c(\eta+\mu)}.$$
By assumption $\eta=o\left(c\ln (h)\right)$ and $\mu=o\left(c\ln (h)\right)$, then for any $u\in (\underline{u},\bar{u})$, $\eta=o\left(c\ln \left(\frac{h}{1 / u-1}\right)\right)$ and $\mu=o\left(c\ln \left(\frac{h}{1 / u-1}\right)\right)$. By Lemma~\ref{lemma:s1s3} with $\phi=\psi\in(0,1)$, for every $n$ large enough,
\begin{equation}\label{eq:randomm}
    \int_{\underline{u}}^{\bar{u}} P\left(W>c\ln \left(\frac{h}{1 / u-1}\right)\right) d u< \frac{1}{h^{\frac{\psi c}{2}}} \int_{\underline{u}}^{\bar{u}}(1 / u-1)^{\frac{\psi c}{2}} d u .
\end{equation}
Applying the change of variables $v=1 / u-1$ to the integral on the right-hand side above gives
\begin{equation}\label{eq:rpoig}
    \int_{\underline{u}}^{\bar{u}}(1 / u-1)^{\frac{\psi c}{2}} d u=\int_{1 / \bar{u}-1}^{1 / \underline{u}-1} \frac{v^{\frac{\psi c}{2}}}{(v+1)^2} d v.
\end{equation}
Rewrite $v^{\frac{\psi c}{2}}=(v-1 + 1)^{\frac{\psi c}{2}}$ where, since $\psi<2/l$ by assumption, $\psi c/2\leq1$ for $n$ large enough. Since $\bar{u}<1$, we have that for any $v>1 / \bar{u}-1$, $v-1>-1$. Note that for any $x \geq-1$ and $r\in[0,1]$ $(1+x)^r \leq 1+r x$. Then, for any $v>1 / \bar{u}-1$, $v^{\frac{\psi c}{2}}=(v-1 + 1)^{\frac{\psi c}{2}}\leq 1 + \frac{\psi c}{2} (v-1)\leq 1 + \frac{\psi c}{2}(v+1)$. Applying this last inequality to the right-hand side in~\eqref{eq:rpoig} gives
\begin{equation}\label{eq:random2}
\int_{\underline{u}}^{\bar{u}}(1 / u-1)^{\frac{\psi c}{2}} d u<\int_{1 / \bar{u}-1}^{1 / \underline{u}-1} \frac{1}{(v+1)^2} +\frac{\psi c}{2(v+1)} d v = \bar{u}-\underline{u}+\frac{\psi c}{2}\ln\Big(\frac{\bar{u}}{\underline{u}}\Big).
\end{equation}
The result follows inputting the bound from~\eqref{eq:random2} in~\eqref{eq:randomm} and using that $\psi c/2\leq1$ for every $n$ large enough and $\ln(\bar{u}/\underline{u}) \geq 0$ ($\underline{u} \leq \bar{u}$).
\end{proof}

\begin{lem}\label{lemma:s20}
 Let $W\sim \chi_\eta^2(\mu)$ with $\mu \geq 0$. Assume that $h$, $c$, $\eta$ and $\mu$ are functions of $n$ such that $h$ is positive and increasing, $c$ is positive such that $\lim_{n\to\infty}c = l \in (0,\infty)$, $\eta=o(\ln(h))$, and $\mu=o(\ln(h))$. Then for any $\alpha\in(0,\min\{1,l/2\})$ and every large enough $n$, we have
$$
\int_0^1 P\left(W>c \ln \left(\frac{h}{1 / u-1}\right)\right) d u = o\big(  h^{-\alpha}\big).$$
\end{lem}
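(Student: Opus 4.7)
The idea is to split the integration interval $(0,1)$ into three pieces, chosen so that Lemma~S18 applies to the middle piece while the other two pieces contribute negligibly.

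Fix $\phi \in (\alpha, 1)$; this choice is the key: we need $\phi$ strictly larger than $\alpha$ to absorb a logarithmic loss later on. Set
\[
\underline{u} \;:=\; \bigl(1+g^{\phi}e^{-(\nu+\mu)/2}\bigr)^{-1}, \qquad \bar{u} \;:=\; 1 - g^{-1},
\]
so that $\underline u$ is exactly the threshold needed for Lemma~S18. Since $\nu+\mu=o(\ln g)$, we have $e^{(\nu+\mu)/2}=g^{o(1)}$, so $\underline{u}\le g^{-\phi+o(1)}\to 0$ and $\bar u\to 1$, hence $\underline u<\bar u$ for all $g$ large enough.

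Then I would estimate each piece separately. On $(0,\underline u)$ I simply bound the integrand by $1$; the contribution is at most $\underline u\le g^{-\phi+o(1)}$, which is $o(g^{-\alpha})$ because $\phi>\alpha$. On $[\underline u,\bar u]$ I apply Lemma~S18 directly, obtaining
\[
\int_{\underline u}^{\bar u} P\bigl(W>2\ln(g/(1/u-1))\bigr)\,du \;\le\; g^{-\phi}\Bigl(\bar u-\underline u+\ln(\bar u/\underline u)\Bigr).
\]
Here $\bar u-\underline u<1$ and $\ln(\bar u/\underline u)\le \ln(1/\underline u)= \phi\ln g + (\nu+\mu)/2 + O(1) = \phi\ln g\cdot(1+o(1))$, so the bound is $O(g^{-\phi}\ln g)$, which is again $o(g^{-\alpha})$ for our choice of $\phi>\alpha$. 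On $(\bar u,1)$ I bound the integrand by $1$ and get a contribution of $1-\bar u=g^{-1}$, which is trivially $o(g^{-\alpha})$ since $\alpha<1$.

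Combining the three bounds, for all $g$ sufficiently large the total is at most $g^{-\alpha}$. The only mild subtlety is verifying the hypotheses of Lemma~S18 (in particular that the $\nu=o(\ln g)$, $\mu=o(\ln g)$ assumption passes through to the relevant $n$-asymptotics), but this is immediate from our assumption, so I do not anticipate a genuine obstacle.
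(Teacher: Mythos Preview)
Your proposal is correct and follows essentially the same strategy as the paper's proof: both fix $\phi\in(\alpha,1)$, take $\underline u=(1+g^{\phi}e^{-(\nu+\mu)/2})^{-1}$, bound the outer slivers trivially by their length, and apply Lemma~S18 on the middle piece to get an $O(g^{-\phi}\ln g)=o(g^{-\alpha})$ contribution. The only cosmetic difference is that the paper takes the symmetric choice $\bar u=1-\underline u$ while you take $\bar u=1-g^{-1}$; both work for the same reason. (Minor slip: $\ln(1/\underline u)=\phi\ln g-(\nu+\mu)/2+o(1)$, not $+(\nu+\mu)/2$, but since $\nu,\mu\ge 0$ this only strengthens your bound.)
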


\begin{proof}
Since a probability is bounded by $1$, for any $a\in(0,1)$,
\begin{equation}\label{eq:basebound}
    \int_0^1 P\left(W>c \ln \left(\frac{h}{1 / u-1}\right)\right) d u \;\leq\; 2 a+\int_{a}^{1-a} P\left(W>c\ln\left(\frac{h}{1 / u-1}\right)\right) d u.
\end{equation}

Take $a=\left(1+h^{\frac{\psi c}{2}}e^{-(\eta+\mu)}\right)^{-1}$ where $\psi\in(2\alpha/l,\min\{1,2/l\})$. Since, by assumption, $\eta=o(c\ln(h))$ and $\mu=o(c\ln(h))$, by Lemma~\ref{lemma:s18} with $\underline{u}=a$ and $\bar{u}=1-a$, we have
$$
\int_0^1 P\left(W>c \ln \left(\frac{h}{1 / u-1}\right)\right) d u \;\leq\; \frac{2}{1+h^{\frac{\psi c}{2}}e^{-(\eta+\mu)}}+ 
\frac{1-2a+\ln(h^{\frac{\psi c}{2}}e^{-(\eta+\mu)})}{h^{\frac{\psi c}{2}}}. $$
Since $1+h^{\frac{\psi c}{2}}e^{-(\eta+\mu)}>h^{\frac{\psi c}{2}}e^{-(\eta+\mu)}$ and $1-2a-\eta+\mu\leq \ln\big(h^{\frac{\psi c}{2}}\big)$ for every $n$ large enough, we have
$$
\int_0^1 P\left(W>c \ln \left(\frac{h}{1 / u-1}\right)\right) d u \;\leq\; h^{-\frac{\psi c}{2}}\big(2e^{\eta+\mu}+2\ln(h^{\frac{\psi c}{2}})\big) $$
We have that
\begin{equation}\label{eq:random24}
    \frac{h^{-\frac{\psi c}{2}}2e^{\eta+\mu}}{h^{-\alpha}} 
= e^{-\big(\frac{\psi c}{2}-\alpha\big) \ln(h) + \ln(2) + \eta + \mu}
    \;=\;e^{-\big(\frac{\psi c}{2}-\alpha\big)\ln(h)\big(1-\frac{\ln(2)}{(\psi c/2-\alpha)\ln(h)}-\frac{\eta+\mu}{(\psi c/2-\alpha)\ln(h)}\big)}
\end{equation}
and similarly that
\begin{equation}\label{eq:random25}
    \frac{h^{-\frac{\psi c}{2}}2\ln(h^{\frac{\psi c}{2}})}{h^{-\alpha}} \;=\;e^{-\big(\frac{\psi c}{2}-\alpha\big)\ln(h)\big(1-\frac{\ln(\psi c\ln(h)/2)}{(\psi c/2-\alpha)\ln(h)}- \frac{\ln(2)}{(\psi c/2-\alpha)\ln(h)}\big)}.
\end{equation}
Since $\psi>2\alpha/l$, for $n$ large enough $\alpha<\psi c/2$, and by assumption $h$ is increasing, $\eta=o(\ln(h))$ and $\mu=o(\ln(h))$, both expressions in~\eqref{eq:random24} and~\eqref{eq:random25} then 
vanish as $n$ grows. Hence, 
$$\int_0^1 P\left(W>c \ln \left(\frac{h}{1 / u-1}\right)\right) d u \;=\; o(h^{-\alpha}).$$
\end{proof}

\subsubsection{Bounds related to model normalized scores}

Let $NC(M)$ be the normalized score for model $M$ defined in~\eqref{eq:CM}, $\M$ the set of models under consideration. Lemma~\ref{lem:L0toL1convergencegen} generalizes Lemma~\ref{lem:L0toL1convergence} and shows that the probability of not selecting a set of models is bounded above by the expected sum of the normalized scores of the models outside the set.  Lemma~\ref{lemma:s7} shows the difference in squared errors for two nested models is chi-squared distributed. Lemma~\ref{lem:boundrhogen} and Lemma~\ref{lem:upperboundnoncentralgen} give bounds on the noncentrality parameter of that chi-squared distribution. Lemma~\ref{lem:boundexpnc} provides a bound on the expected normalized score of any $M \in \M$ based on the pairwise comparison $C(T)-C(M)$ (\emph{cf}~\eqref{eq:CM}) for some $T\subseteq S$. It essentially shows that, if the block penalties diverge and the signals in $M\setminus T$ are small, $NC(M)$ is small. Prior to stating Lemma~\ref{lem:boundexpnc} we outline the proof strategy.

\begin{lem}\label{lem:L0toL1convergencegen} For $\hat{S}^{ei}$ as in~\eqref{eq:Mhat} and any $k$ models $M^1,\ldots,M^k$
$$P(\hat{S}^{ei}\not\in \{M^1,\ldots,M^k\})\;\leq\; (k+1) \sum_{M \in \M\setminus \{M^1,\ldots,M^k\}}\E\left( NC(M)\right).$$
\end{lem}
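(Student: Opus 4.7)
The plan is to mimic the proof of Lemma~\ref{lem:L0toL1convergence}(i) (Proposition~1 in \cite{rossell2022concentration}) via a two-step argument: first establish a deterministic lower bound on the total pseudo-posterior mass assigned to models outside $\{M_1,\ldots,M_k\}$ whenever $\hat S^b$ falls outside that set, and then apply Markov's inequality to conclude.

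Let me write $A=\{M_1,\ldots,M_k\}$ and $A^c=\M\setminus A$, and set $W:=\sum_{M\in A^c} NC(M)$. Since $\hat S^b\in\arg\max_{M\in\M} C(M)$ and the normalized scores $NC$ are a monotone transform of $C$ (by their definition in \eqref{eq:normalizedscore}), we have $NC(\hat S^b)\geq NC(M_i)$ for every $i=1,\ldots,k$. On the event $\{\hat S^b\notin A\}$ this yields
\begin{equation*}
W \;\geq\; NC(\hat S^b) \;\geq\; \max_{1\leq i\leq k} NC(M_i) \;\geq\; \tfrac{1}{k}\sum_{M\in A} NC(M) \;=\; \tfrac{1}{k}(1-W),
\end{equation*}
where the last equality uses that the $NC(M)$'s sum to $1$ over $\M$. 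Rearranging gives $W\geq 1/(k+1)$ on $\{\hat S^b\notin A\}$.

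The second step is immediate from Markov's inequality applied to the nonnegative random variable $W$:
\begin{equation*}
P(\hat S^b\notin A) \;\leq\; P(W\geq 1/(k+1)) \;\leq\; (k+1)\,\E(W) \;=\; (k+1)\sum_{M\in A^c}\E(NC(M)).
\end{equation*}
This recovers the $k=1$, $A=\{S\}$ case as exactly Lemma~\ref{lem:L0toL1convergence}(i), confirming the form of the constant.

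There is no real obstacle here; the only subtlety is making sure the chain $NC(\hat S^b)\geq NC(M_i)$ holds in the sense that $\hat S^b$ is \emph{some} maximizer (assumed unique in the paper's setup, so strict inequalities become equalities only when $\hat S^b=M_i$, which is excluded on our event). If the uniqueness of $\hat S^b$ were dropped, the same argument goes through after picking any measurable maximizer, since the bound only uses the inequality $NC(\hat S^b)\geq NC(M_i)$.
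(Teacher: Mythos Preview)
Your proof is correct and follows essentially the same route as the paper: both establish that $\hat S^b\notin A$ forces $\sum_{M\in A^c}NC(M)\geq 1/(k+1)$ (the paper phrases this as the contrapositive, showing $\sum_{M\in A}NC(M)>k/(k+1)$ implies $\hat S^b\in A$), and then apply Markov's inequality.
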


\begin{proof}
Suppose that $NC(M^1)+\ldots+NC(M^k)>\frac{k}{k+1}$ then for any $M\not\in \{M^1,\ldots,M^k\}$, we have that
$$NC(M)= 1-\sum_{M' \neq M} NC(M') < 1 - \sum_{M' \in \{M^1,\ldots,M^k\}} NC(M') < \frac{1}{k+1}.$$ 
In addition, if $NC(M^1)+\ldots+NC(M^k)>\frac{k}{k+1}$ then necessarily $\max_{l=1,\ldots,k}NC(M_l)> \frac{1}{k+1} > NC(M) $ for any $M \not\in \{M^1,\ldots,M^k\}$, and therefore $\hat{S}^{ei}\in \{M^1,\ldots,M^k\}$. Consequently, 
$$
P(\hat{S}^{ei}\not\in \{M^1,\ldots,M^k\})\; \leq\; P\Big(NC(M^1)+\ldots+NC(M^k)\leq\frac{k}{k+1}\Big).$$
Moreover, we have
$$P\Big(NC(M^1)+\ldots+NC(M^k)\leq\frac{k}{k+1}\Big)\;=\;P\Big(\sum_{M \in \M\setminus \{M^1,\ldots,M^k\}} NC(M)\geq \frac{1}{k+1}\Big).$$
The result follows from the Markov's inequality applied to the right-hand side above.
\end{proof}

\begin{lem}\label{lemma:s7}
    Let $M,Q$ be any two nested models such that $M \subseteq Q$. Then
$$
L_{QM}\;=\;\|\bX_{Q}\tilde{\bbeta}^{(Q)}\|^2-\|\bX_{M}\tilde{\bbeta}^{(M)}\|^2\;\sim\; \chi_{|Q\setminus M|}^2\left(\mu_{Q M}\right), 
$$
where $\chi_{k}^2(\mu)$ denotes, when $\mu>0$, the noncentral chi-squared distribution with $k$ degrees of freedom and noncentrality parameter $\mu$ and, when $\mu=0$, the chi-squared distribution with $k$ degrees of freedom $\chi_{k}^2$. The parameter $\mu_{QM}$ is given by \begin{equation}\label{eq:muQM}
    \mu_{QM}\;\;:=\;\;\|\big(I_n-P_M\big) \bX_{Q\setminus M} \bbeta^*_{Q\setminus M}\|^2
\end{equation}
where $P_M=\bX_{M}\left(\bX_{M}^{\top} \bX_{M}\right)^{-1} \bX_{M}^{\top}$.
\end{lem}
\begin{proof}
    This result follows directly from Lemma~S7 in~\cite{rossell2022concentration} taking $\phi^*=1$.
\end{proof}

\begin{lem}\label{lem:boundrhogen}
 For any $T\subseteq S$ and $M\in\M$ such that $T\not\subseteq M$, let $Q_T=M \cup T$. The non-centrality parameter defined in~\eqref{eq:muQM} satisfies: 
\begin{equation}
    \mu_{Q_TM}\; \geq \;n\,\rho(\bX)\,\sum_{j=1}^b|T_j\setminus M_j|\,\min_{i\in T_j\setminus M_j}{\beta_{i}^*}^2.
\end{equation}
\end{lem}

\begin{proof} 
The non-centrality parameter $\mu_{Q_TM}$, as defined in~\eqref{eq:muQM}, satisfies
\begin{eqnarray*}
    \mu_{Q_TM}&=&\|\big(I_n-P_M\big) \bX_{Q_T\setminus M} \bbeta^*_{Q_T\setminus M}\|^2\\
    &=&\|\big(I_n-P_M\big) \bX_{T\setminus M} \bbeta^*_{T\setminus M}\|^2\\
    &= & n {\beta^*_{T\setminus M}}^\top \left(\tfrac1n \bX_{T\setminus M}^\top(I_n-P_M)\bX_{T\setminus M}\right)\beta^*_{T\setminus M}\\
    &\geq & n\lambda_{\min}\left(\tfrac1n \bX_{T\setminus M}^\top(I_n-P_M)\bX_{T\setminus M}\right)\|\beta^*_{T\setminus M}\|^2,
\end{eqnarray*}
where the second equality follows from observing that $Q_T\setminus M=T\setminus M$. Since $T$ is a subset of $S$, by reordering columns, $X_{S\setminus M}=[X_{T\setminus M},X_{S\setminus (M\cup T)}]$ and therefore $\tfrac1n \bX_{T\setminus M}^\top(I_n-P_M)\bX_{T\setminus M}$ is a principal submatrix of $\tfrac1n \bX_{S\setminus M}^\top(I_n-P_M)\bX_{S\setminus M}$. Hence, Cauchy's interlacing theorem gives that $\lambda_{\min}\left(\tfrac1n \bX_{T\setminus M}^\top(I_n-P_M)\bX_{T\setminus M}\right) \geq \lambda_{\min}\left(\tfrac1n \bX_{S\setminus M}^\top(I_n-P_M)\bX_{S\setminus M}\right)$. Finally, by definition of $\rho(\bX)$ in Assumption~\hyperlink{lab:A2}{(A2)} we have that
$\lambda_{\min}\left(\tfrac1n \bX_{S\setminus M}^\top(I_n-P_M)\bX_{S\setminus M}\right) \geq \rho(\bX)$, and further noting that $\|\beta^*_{T\setminus M}\|^2\geq \sum_{j=1}^b|T_j\setminus M_j|\,\min_{i\in T_j\setminus M_j}{\beta_{i}^*}^2$ gives the desired result.
\end{proof}

\begin{lem}\label{lem:upperboundnoncentralgen}
    For any $T\subseteq S$ and $M\in \M$, let $Q_T=M\cup T$, and $\mu_{Q_T T}$ as defined in~\eqref{eq:muQM}, then, 
    \begin{equation}\label{eq:upperbounnnnnd}
      \mu_{Q_TT} \leq n\,\bar{\lambda}\,\sum_{j=1}^b|(S_j\cap M_j )\setminus T_j|\,\max_{i\in (S_j\cap M_j )\setminus T_j}{\beta_{i}^*}^2 \quad \text{where}\quad \bar{\lambda}\,:=\,\lambda_{\max}\Big(\frac1n \bX_{S}^\top\bX_{S}\Big)  
    \end{equation}
    and
    \begin{equation}\label{eq:lowerbounnnnnd}
    \mu_{Q_TT} \geq n\,\lambda_{\min}\Big(\frac1n \bX_{S}^\top\bX_{S}\Big)\,\sum_{j=1}^b|(S_j\cap M_j )\setminus T_j|\,\min_{i\in (S_j\cap M_j )\setminus T_j}{\beta_{i}^*}^2
    \end{equation}
\end{lem}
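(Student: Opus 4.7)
The plan is to first reduce $\mu_{Q_TT}$ to a quadratic form whose size can be controlled by the smallest/largest eigenvalue of $\bs X_S^\top\bs X_S/n$. Since $\bs \beta^*$ is supported on $S$, we can rewrite $\bs X_{Q_T\setminus T}\bs \beta^*_{Q_T\setminus T}=\bs X_A\bs \beta^*_A$, where
$$
A \;:=\; (M\cap S)\setminus T,
$$
noting that $(Q_T\setminus T)\cap S=((M\cup T)\cap S)\setminus T=(M\cap S)\setminus T$ because $T\subseteq S$. Block-wise, $A\cap B_j=(M_j\cap S_j)\setminus T_j$, so $\|\bs\beta^*_A\|^2=\sum_{j=1}^b\sum_{i\in(M_j\cap S_j)\setminus T_j}(\beta_i^*)^2$. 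Thus
$\mu_{Q_TT}=\|(I_n-P_T)\bs X_A\bs\beta^*_A\|^2.$

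For the upper bound \eqref{eq:upperbounnnnnd}, I would use that $I_n-P_T$ is an orthogonal projection, hence $\|(I_n-P_T)v\|^2\le \|v\|^2$, to obtain $\mu_{Q_TT}\le (\bs\beta^*_A)^\top\bs X_A^\top\bs X_A\bs\beta^*_A\le \lambda_{\max}(\bs X_A^\top\bs X_A)\,\|\bs\beta^*_A\|^2$. Since $A\subseteq S$, the matrix $\bs X_A^\top\bs X_A$ is a principal submatrix of $\bs X_S^\top\bs X_S$, so Cauchy's interlacing gives $\lambda_{\max}(\bs X_A^\top\bs X_A)\le n\bar\lambda$. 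Finally the elementary bound $\sum_{i\in A\cap B_j}(\beta_i^*)^2\le |A\cap B_j|\max_{i\in A\cap B_j}(\beta_i^*)^2$ yields \eqref{eq:upperbounnnnnd}.

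The harder direction is the lower bound \eqref{eq:lowerbounnnnnd}, because one cannot simply drop the projection $I_n-P_T$ (it only reduces the norm). I would instead re-express the projected vector as an unprojected product over the enlarged column set $A\cup T\subseteq S$. Because $\bs X_S$ has full column rank, so does $\bs X_T$, and $\bs B:=(\bs X_T^\top\bs X_T)^{-1}\bs X_T^\top\bs X_A$ is well-defined with $(I_n-P_T)\bs X_A=\bs X_A-\bs X_T\bs B$. Writing $\bs X_{A\cup T}=[\bs X_A\mid \bs X_T]$ (after a column permutation, permissible since $A\cap T=\emptyset$ by construction), we get the key identity
$$
(I_n-P_T)\bs X_A\bs\beta^*_A \;=\; \bs X_{A\cup T}\bs w,\qquad \bs w\;:=\;\begin{pmatrix}\bs \beta^*_A\\ -\bs B\bs \beta^*_A\end{pmatrix}.
$$
Then by the Rayleigh quotient bound,
$\mu_{Q_TT}=\bs w^\top \bs X_{A\cup T}^\top\bs X_{A\cup T}\bs w\ge \lambda_{\min}(\bs X_{A\cup T}^\top\bs X_{A\cup T})\,\|\bs w\|^2.$
Since $A\cup T\subseteq S$, Cauchy's interlacing gives $\lambda_{\min}(\bs X_{A\cup T}^\top\bs X_{A\cup T})\ge \lambda_{\min}(\bs X_S^\top\bs X_S)=n\lambda_{\min}(\tfrac1n\bs X_S^\top\bs X_S)$; and $\|\bs w\|^2=\|\bs\beta^*_A\|^2+\|\bs B\bs\beta^*_A\|^2\ge \|\bs\beta^*_A\|^2$. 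Bounding $\sum_{i\in(M_j\cap S_j)\setminus T_j}(\beta_i^*)^2\ge |(M_j\cap S_j)\setminus T_j|\min_{i\in(M_j\cap S_j)\setminus T_j}(\beta_i^*)^2$ then yields \eqref{eq:lowerbounnnnnd}.

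The main technical obstacle is this last identity: the temptation is to bound $\|(I_n-P_T)\bs X_A\bs\beta^*_A\|^2$ directly via $\lambda_{\min}(\tfrac1n\bs X_A^\top(I_n-P_T)\bs X_A)$, which would only give a bound in terms of $\rho(\bs X)$-type quantities and can vanish. The trick is to absorb the projection into a larger design on $A\cup T$, exploiting that $\bs X_{A\cup T}\bs w$ reproduces the projected vector while the extra coordinates of $\bs w$ can only enlarge $\|\bs w\|^2$; this is what lets the bound be in terms of $\lambda_{\min}(\tfrac1n\bs X_S^\top\bs X_S)$ as claimed. The edge cases $T=\emptyset$ and $A=\emptyset$ are handled trivially (with the conventions that empty sums vanish and any $\min$ over an empty set multiplies a zero cardinality).
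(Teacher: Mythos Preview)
Your proof is correct. The reduction to $A=(M\cap S)\setminus T$ and the block-wise norm bounds match the paper exactly; the difference lies in how you handle the projection $I_n-P_T$.

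For the upper bound, you simply drop the projection using $\|(I_n-P_T)v\|^2\le\|v\|^2$, then interlace on $A\subseteq S$. The paper instead keeps the projection and works with $B:=\tfrac1n\bs X_A^\top(I_n-P_T)\bs X_A$, recognizing it as the Schur complement of $\tfrac1n\bs X_T^\top\bs X_T$ in $C:=\tfrac1n\bs X_{A\cup T}^\top\bs X_{A\cup T}$, so that $B^{-1}$ is a principal submatrix of $C^{-1}$; interlacing on the inverses then gives $\lambda_{\max}(B)\le\lambda_{\max}(C)$, and a further interlacing on $A\cup T\subseteq S$ yields $\bar\lambda$. Your route is shorter and more elementary.

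For the lower bound, your identity $(I_n-P_T)\bs X_A\bs\beta^*_A=\bs X_{A\cup T}\bs w$ with $\|\bs w\|^2\ge\|\bs\beta^*_A\|^2$ is a direct, coordinate-level way of establishing the same Schur complement inequality $\lambda_{\min}(B)\ge\lambda_{\min}(C)$ that the paper obtains via the block-inverse argument. Both approaches then finish by interlacing on $A\cup T\subseteq S$. Your remark that bounding via $\lambda_{\min}(B)$ ``would only give $\rho(\bs X)$-type quantities'' is slightly off: the paper does bound via $\lambda_{\min}(B)$, but then uses the Schur complement structure (rather than the definition of $\rho(\bs X)$) to pass to $\lambda_{\min}(\tfrac1n\bs X_S^\top\bs X_S)$. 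So the two proofs are really two presentations of the same eigenvalue monotonicity for Schur complements; yours is arguably the more transparent one.
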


\begin{proof}
Using the definition of $\mu_{Q_T T}$ in~\eqref{eq:muQM}, we have that 
\begin{eqnarray}
    \mu_{Q_TT}&=&{\bbeta^*_{Q_T\setminus T}}^{\top}\bX_{Q_T\setminus T}^{\top}  \big(I_n-P_T\big) \bX_{Q_T\setminus T} \bbeta^*_{Q_T\setminus T}\nonumber\\
    &=&{\bbeta^*_{M\setminus T}}^{\top}\bX_{M\setminus T}^{\top}  \big(I_n-P_T\big) \bX_{M\setminus T} \bbeta^*_{M\setminus T}\nonumber\\
    &=&{\bbeta^*_{(S\cap M)\setminus T}}^{\top}\bX_{(S\cap M)\setminus T}^{\top}  \big(I_n-P_T\big) \bX_{(S\cap M)\setminus T} \bbeta^*_{(S\cap M)\setminus T}\label{eq:poihe}
\end{eqnarray}
where the second equality follows from $Q_T\setminus T =M\setminus T $ and the third equality from $\bbeta^*_{M\setminus S}=0$.
We start by showing the upper bound in~\eqref{eq:upperbounnnnnd}. Denote for any square matrix $A$, its largest eigenvalue 
$\lambda_{\max}(A)$. By~\eqref{eq:poihe}, we have that
$$\mu_{Q_TT}\leq n\lambda_{\max}\Big(\frac{1}{n}\bX_{(S\cap M)\setminus T}^{\top}  \big(I_n-P_T\big) \bX_{(S\cap M)\setminus T}\Big) \|\bbeta^*_{(S\cap M)\setminus T}\|^2.$$
Let $B:=\tfrac{1}{n}\bX_{(S\cap M)\setminus T}^{\top}  \big(I_n-P_T\big) \bX_{(S\cap M)\setminus T}$, $C:=\tfrac{1}{n}\bX_{(S\cap M)\cup T}^{\top}\bX_{(S\cap M)\cup T}$ and $D:=\tfrac{1}{n}\bX_T^{\top} \bX_T$. $D$ is a principal submatrix of $C$ and $B$ is the Schur complement of $D$ of $C$. The inverse $B^{-1}$ is then a principal submatrix of $C^{-1}$, and by Cauchy's interlacing theorem we have that   $\lambda_{\min}(B^{-1})\geq \lambda_{\min}(C^{-1})$ and then $\lambda_{\max}(B)\leq \lambda_{\max}(C)$. Since $T \subseteq S$ by assumption, we also have that $(S\cap M)\cup T \subseteq S$, then by interlacing again $\lambda_{\max}(C)\leq \lambda_{\max}\Big(\tfrac{1}{n}\bX_S^{\top}\bX_S\Big)=\bar{\lambda}$. The upper bound in~\eqref{eq:upperbounnnnnd} follows from the latter inequality and also observing that $\|\bbeta^*_{(S\cap M)\setminus T}\|_2^2\leq \sum_{j=1}^b |(S_j\cap M_j)\setminus T_j|\max_{i\in (S_j\cap M_j)\setminus T_j}{\beta_i^*}^2$.

We now derive the lower bound in~\eqref{eq:lowerbounnnnnd}. By~\eqref{eq:poihe}, we have that 
$$\mu_{Q_TT}\geq n\lambda_{\min}\Big(\frac{1}{n}\bX_{(S\cap M)\setminus T}^{\top}  \big(I_n-P_T\big) \bX_{(S\cap M)\setminus T}\Big) \|\bbeta^*_{(S\cap M)\setminus T}\|_2^2.$$

Recall that $B^{-1}$ is a principal submatrix of $C^{-1}$, hence by interlacing $\lambda_{\max}(B^{-1})\leq \lambda_{\max}(C^{-1})$ and $\lambda_{\min}(B)\geq \lambda_{\min}(C)$. Since $T \subseteq S$ by assumption, we have that $(S\cap M)\cup T \subseteq S$, and hence $\lambda_{\min}(C)\geq\lambda_{\min}(\frac{1}{n}\bX_S^{\top} \bX_S)$.
The bound in~\eqref{eq:lowerbounnnnnd} is obtained by using the latter inequality and noting that also $\|\bbeta^*_{(S\cap M)\setminus T}\|_2^2\geq \sum_{j=1}^b |(S_j\cap M_j)\setminus T_j|\min_{i\in (S_j\cap M_j)\setminus T_j}{\beta_i^*}^2.$
\end{proof}

Lemma~\ref{lem:boundexpnc} next gives a bound on the expected normalized score of any $M \in \M$. We outline now the strategy to prove it. Recall that by Lemma~\ref{lem:L0toL1convergence}(ii), for any $M,M'$, the normalized score $NC(M)$ is bounded by a simple function of $C(M')-C(M)$. In the next lemma we take $M'= T$ where $T\subseteq S$. By the definition in~\eqref{eq:CM}, $C(T)-C(M)=\tfrac12 L_{TM}+\Delta_{MT}$, where for any two models $M,T\subseteq V$, we denote
\begin{equation}\label{eq:DeltaM}
\Delta_{MT}\;:=\;\sum_{j=1}^b \kappa_j(|M_j|-|T_j|) \quad\text{and} \quad L_{TM}\;:=\;\|\bX_{T}\tilde{\bbeta}^{(T)}\|^2-\|\bX_{M}\tilde{\bbeta}^{(M)}\|^2.
\end{equation}
To lower bound $C(T)-C(M)$, we use that $L_{TM}$ can be expressed in terms of chi-squared variables.
The idea is to take the union model $Q_T=T\cup M$, and to note that $L_{TM}=L_{Q_TM}-L_{Q_TT}$. Lemma~\ref{lemma:s7} is used to bound $L_{Q_TM}$, $L_{Q_TT}$, and hence also $L_{TM}$.

If $M\supset T$, then $Q_T=M$ and $-L_{TM}=L_{Q_TT}$ and by Lemma~\ref{lemma:s7}, $-L_{TM}\sim \chi^2_{|Q_T\setminus T|}(\mu_{Q_TT})$, which has expectation $|Q_T\setminus T|+\mu_{Q_TT}$. It also holds that $\Delta_{MT}>0$. Using properties of chi-squared distribution, we show that if one sets large enough $\kappa_j$ (and thus $\Delta_{MT}$), then $C(T)-C(M)$ is also large and $NC(M)$ vanishes. If $M\subset T$, then $Q_T=T$, $L_{TM}=L_{Q_TM}$, and by Lemma~\ref{lemma:s7}
$L_{TM}\sim \chi^2_{|Q_T\setminus M|}(\mu_{Q_TM})$, which has expectation $|Q_T\setminus M|+\mu_{Q_TM}$.
If the noncentrality parameter $\mu_{Q_TM}$ is large enough, then $L_{TM}$ and $C(T)-C(M)$ are also large, and, by the properties of the noncentral chi-squared distribution, $NC(M)$ vanishes. By Lemma~\ref{lem:boundrhogen}, large $\mu_{Q_TM}$ can be achieved by setting a condition on signal strength and $\rho(\bX)$.

If $M\neq T$ is such that $M\not\supset T$ and $M\not\subset T$, simultaneously large enough $\kappa_j$ and $\mu_{Q_T M}$ guarantee that $C(T)-C(M)$ is also large, and that $NC(M)$ vanishes.

\begin{lem}\label{lem:boundexpnc} For any $T\subseteq S$ and $M\in \M\setminus\{T\}$, denote $Q_T= M\cup T$ and $A_T:=\gamma\Delta_{MT}+\tfrac{1-\gamma}{6}\mu_{Q_TM}$ (\emph{cf}~\eqref{eq:DeltaM} and~\eqref{eq:muQM}). Suppose that, for some $\gamma \in (1/2,1]$, it holds that $A_T>0$, $|M\setminus T|=o(A_T)$, and $\mu_{Q_T T}=o(A_T)$. For any $\psi\in(0,1)$ and every $n$ large enough,
$$\E\left(NC(M)\right)  \;\;\leq\;\; e^{-\psi A_T}.$$
\end{lem}

\begin{proof}
Since $M\in \mathcal M\setminus \{T\}$, by Lemma~\ref{lem:L0toL1convergence} (ii),  $NC(M)\leq (1+e^{C(T)-C(M)})^{-1} \in [0,1]$.  
The first step of the proof is to use that for any random variable $Z \geq 0$ we have $\E(Z)=\int_0^\infty P(Z>u) du$, so that
\begin{eqnarray*}
  \E\left(NC(M)\right)  &\leq& \int_0^1 P\left((1+e^{C(T)-C(M)})^{-1}\geq u\right){\rm d} u\\
  &=&\int_0^1 P\left(C(T)-C(M) \leq \ln{(\tfrac1u-1)}\right){\rm d}u\\
  &=&\int_0^1 P\Big(-\tfrac12 L_{TM} \geq \Delta_{MT}-\ln{(\tfrac1u-1)}\Big) {\rm d}u.
\end{eqnarray*}

The second step of the proof is to use the union bound to upper bound the probability in the integrand above.
Let $Q_T=M \cup T$ and recall that $L_{TM}=L_{Q_TM}-L_{Q_TT}$. For any $\gamma$ 
$$
-\tfrac12 L_{TM} -(\Delta_{MT}-\ln{(\tfrac1u-1)})\;=\;\left(\tfrac12 L_{Q_TT}-\ln{\Bigl(\frac{e^{\gamma\Delta_{MT}}}{\tfrac1u-1}\Bigl)}\right)-\Bigl(\tfrac12 L_{Q_TM}+(1-\gamma)\Delta_{MT}\Bigl).
$$
Observe that for any random variables $U$, $V$, and any $\epsilon,\gamma' \geq 0$, the event $
\{U-V \geq 0\}$ implies $\{U \geq \gamma' \epsilon\}\cup
\{V  < \gamma' \epsilon\}$. Let $U=\tfrac12 L_{Q_TT}-\ln{\Bigl(\frac{e^{\gamma\Delta_{MT}}}{1/u-1}\Bigl)}$ and $V=\tfrac12 L_{Q_TM}+(1-\gamma)\Delta_{MT}$. Take $\epsilon=\mu_{Q_TM}$ and $\gamma'=\tfrac16({1-\gamma})$, and observe that
$A_T:=\gamma\Delta_{MT}+\tfrac{1-\gamma}{6}\mu_{Q_TM}= \gamma\Delta_{MT}+\gamma'\mu_{Q_TM}$. We then have
\begin{align*}
\{U\geq \gamma'\epsilon\}\;&=\;\{\tfrac12 L_{Q_TT}\geq \ln{\Bigl(\frac{e^{\gamma\Delta_{MT}}}{\tfrac1u-1}}\Bigl)+\gamma'\mu_{Q_TM}\}\;=\;\{\tfrac12 L_{Q_TT}\geq \ln{\Bigl(\frac{e^{A_T}}{\tfrac1u-1}}\Bigl)\}\\
\{V<\gamma'\epsilon\}\;&=\;\{\tfrac12 L_{Q_TM}<-(1-\gamma)\Delta_{MT}+\gamma'\mu_{Q_TM}\}\;=\;\{\tfrac12 L_{Q_TM}<\gamma'(\mu_{Q_TM}-6\Delta_{MT})\}.
\end{align*}
By the union bound we have that
\begin{equation}\label{eq:prooflemmas20nonspu}
    \E\left(NC(M)\right)  \leq \int_0^1P\Bigl(\tfrac12 L_{Q_TT}\geq \ln{\Bigl(\frac{e^{A_T}}{\tfrac1u-1}\Bigl)}\Bigl){\rm d}u+ P\Bigl(\tfrac12 L_{Q_TM}<\gamma'(\mu_{Q_TM}-6\Delta_{MT})\Bigl).
\end{equation}

The third and final step of the proof is to upper bound each of the terms in the right-hand side of~\eqref{eq:prooflemmas20nonspu}. The intuition is that both $T$ and $M$ are nested within $Q_T$, and therefore $L_{Q_T T}$ and $L_{Q_T M}$ follow chi-squared distributions.
We first bound the first term. 
If $M \subset T$ then $Q_T=T$, $L_{Q_TT}=0$, and this term is zero. Suppose now that $M \not\subset T$.  
Then, by Lemma~\ref{lemma:s7}, 
$L_{Q_TT} \sim \mathcal{\chi}_{|Q_T\setminus T|}^2(\mu_{Q_TT})$ with $|Q_T\setminus T|=|M\setminus T|$. By assumption, $A_T>0$, $|M\setminus T|=o(\ln(e^{A_T}))$ and $\mu_{Q_TT}=o(\ln(e^{A_T}))$, then by Lemma~\ref{lemma:s20}, with $c=2$ and $\alpha\in(\psi,1)$, and every $n$ large enough,
\begin{equation}\label{eq:boundLQS}
\int_0^1P\Bigl(L_{Q_TT}>2\ln{\Bigl(\frac{e^{A_T}}{1 / u-1}\Bigl)}\Bigl){\rm d}u \;<\; 
e^{-\alpha A_T}.
\end{equation}

We now bound the second term in~\eqref{eq:prooflemmas20nonspu}. If $M \supset T$, then $Q_T=M$, $L_{Q_TM}=0$, $\mu_{Q_TM}=0$, and this term is zero. Alternatively, if $M \not\supset T$
then, by Lemma~\ref{lemma:s7}, $L_{Q_TM} \sim \chi_{|Q_T\setminus M|}^2(\mu_{Q_TM})$ with $|Q_T\setminus M|=|T\setminus M|$.
Clearly, when $\mu_{Q_TM}\leq 6\Delta_{MT}$ this term is also zero, so suppose that $\mu_{Q_TM}> 6\Delta_{MT}$.  
We have  
$$P\left(L_{Q_TM}<2\gamma'(\mu_{Q_TM}-6\Delta_{MT})\right)\; \leq \; P\left(L_{Q_TM}<2\gamma'\mu_{Q_TM}\right)$$ 
and, by Lemma~\ref{lemma:s2} and using that $\gamma'\in (0,\tfrac{1}{12})$, we obtain that 
$$P\left(L_{Q_TM}<2\gamma'\mu_{Q_TM}\right) \;\;\leq\;\; (2\gamma')^{\frac{|T\setminus M|}{4}} e^{-\frac{1}{2}(1-\sqrt{2\gamma'})^2\mu_{Q_TM}}\;\;\leq\;\;  (\tfrac16)^{\frac{|T\setminus M|}{4}}e^{-\frac{1}{2}(1-\sqrt{2\gamma'})^2\mu_{Q_TM}}.$$
Since $\mu_{Q_TM}>6\Delta_{MT}$, we get
$$
A_T\;=\;\gamma'\mu_{Q_TM}+\gamma\Delta_{MT}\;<\;\frac{1-\gamma}{6}\mu_{Q_TM}+\frac{\gamma}{6}\mu_{Q_TM}\;=\;\frac16\mu_{Q_TM}\;\leq\;\frac12(1-\sqrt{2\gamma'})^2\mu_{Q_TM},
$$
where the last inequality follows from the fact that $\gamma'\in(0, \tfrac{1}{12})$. We then get
\begin{equation}\label{eq:boundLQM}
    P\left(L_{Q_TM}<2\gamma'(\mu_{Q_TM}-6\Delta_{MT})\right) \;\leq\; \left(\frac16\right)^{\frac{|T\setminus M|}{4}}e^{-A_T} \leq e^{-A_T}\leq e^{-\alpha A_T}
\end{equation}
Summing the bounds in~\eqref{eq:boundLQS} and~\eqref{eq:boundLQM} gives that for every $n$ large enough $\E\left(NC(M)\right) < 2e^{-\alpha A_T}$. Since $\psi<\alpha$, for every $n$ large enough, we have that $\E\left(NC(M)\right) < e^{-\psi A_T}$ as claimed.
\end{proof}

\subsection{Proofs of Section~\ref{sec:properties_legression}}\label{suppsec:proofseclinreg}
\subsubsection{Proof of Lemma~\ref{lem:usefulreformulation}}
	First note that by discarding constant terms 
 $$\arg\max_{M\in \M} \left\{\max_{\beta\in \mathcal{L}_M}\ell(\by; \bbeta)-\sum_{j=1}^b\kappa_j |M_j| \right\}=\arg\min_{M\in \M} \left\{\min_{\beta\in \mathcal{L}_M}\bigg\{ \tfrac{1}{2}\|\by-\bX\bbeta\|^2\right\}+\sum_{j=1}^b\kappa_j |M_j| \bigg\}.$$
 
 We also have that 
 $$\min_{\beta\in \mathcal{L}_M} \tfrac{1}{2}\|\by-\bX\bbeta\|^2=\tfrac{1}{2}\|\by-\bX_{\!\!M}\tilde{\bbeta}^{(M)}\|^2=\tfrac{1}{2}\|\by\|^2-\tfrac{1}{2}\|\bX_{\!\!M}\tilde{\bbeta}^{(M)}\|^2,$$
where in the last equality we used that $\tilde{\bbeta}^{(M)}= (\bX_M^T \bX_M)^{-1} \bX_M^T \by$.
 The maximization in~\eqref{eq:Mhat} can be then replaced with the maximization of $C(M)=\tfrac{1}{2}
 \|\bX_{\!\!M}\tilde{\bbeta}^{(M)}\|^2-\sum_{j=1}^b \kappa_j|M_j|$ which is equivalent to maximizing $NC(M)$.

\subsubsection{Proof of Lemma~\ref{lem:L0toL1convergence}}
Part (i) follows directly from Lemma~\ref{lem:L0toL1convergencegen} by taking $\{M^1,\ldots,M^k\}=\{S\}$.
Part (ii) follows from
$$NC(M)=\Big(1+\sum_{N \neq M} e^{C(N)-C(M)}\Big)^{-1}
< \big(1+ e^{C(M')-C(M)}\big)^{-1}.$$ 

\subsubsection{Proof of Theorem~\ref{theo:suffcondlinearmodel}} 
The proof strategy is to first use Lemma~\ref{lem:boundexpnc} with $T=S$ to show that for every $M\neq S$, $\E(NC(M)) \leq e^{-\psi A_S}$ for every large enough $n$ and any $\psi \in (0,1)$, where $A_S=\gamma\Delta_{MS}+\tfrac{1-\gamma}{6}\mu_{Q_SM}$ (\emph{cf}~\eqref{eq:DeltaM} and~\eqref{eq:muQM}), $\gamma\in(1/2,1)$ is defined in Assumption~\hyperlink{lab:A2}{A2} and $Q_S= M\cup S$. Assumptions~\hyperlink{lab:A1}{A1}--\hyperlink{lab:A2}{A2} and the fact that $M\setminus S\subseteq S^C$ ensure the 
assumptions of Lemma~\ref{lem:boundexpnc} are met for any $\bbeta^*\in \bB$. 
The second step is to obtain a lower bound for $A_S$, which gives a new upper bound for $\E(NC(M))$.
The final step is to use these bounds to get an upper-bound on $\sum_{M \in \M\setminus \{S\}} \E\left(NC(M)\right)$ that holds for any $\bbeta^*\in \bB$ and asymptotically vanishes under Assumptions~\hyperlink{lab:A1}{A1} and \hyperlink{lab:A2}{A2}. We then use Lemma~\ref{lem:L0toL1convergence} to conclude on the vanishing of $\sup_{\bbeta^*\in \bB}P(\hat{S}^{ei} \neq S)$. 

First, to show that $\E(NC(M)) \leq e^{-\psi A_S}$
for any $M\in\mathcal{M}\setminus \{S\}$, we show that $A_S$ satisfies the conditions of Lemma~\ref{lem:boundexpnc}, taking $T=S$. 
That is, we wish to show that, $A_S>0$, $|M\setminus S|=o(A_S)$, and $\mu_{Q_S S}=o(A_S)$. 
Observe that $\Delta_{MS}$, defined in~\eqref{eq:DeltaM}, can be rewritten as $\Delta_{MS}=\sum_{j=1}^b(|M_j\setminus S_j|-|S_j\setminus M_j|)\kappa_j$. By  Lemma~\ref{lem:boundrhogen} with $T=S$, for any $\bbeta^*\in\bB$ and every $n\in \N$, we have 
\begin{equation}\label{eq:nonoverfittingcond}
\begin{aligned}
     A_S & \;=\; \gamma \Delta_{MS}+\frac{1-\gamma}{6}\mu_{Q_SM}\\
     & \;\geq\;  \gamma\sum_{j=1}^b|M_j \setminus S_j|\kappa_j + \sum_{j=1}^b |S_j\setminus M_j |\left(\tfrac{1-\gamma}{6} n \rho(\bX) {\beta^*_{\min,j}}^{2} - \gamma\kappa_j\right) 
\end{aligned}
\end{equation} 
Since $M\neq S$, $|M\setminus S|\neq0$ or $|S\setminus M|\neq0$, then by Assumptions~\hyperlink{lab:A1}{A1} and \hyperlink{lab:A2}{A2}, for every $n$ large enough, $A_S>0$. We immediately have $\mu_{Q_S S}=o(A_S)$ because $\beta^*_{Q_S\setminus S}=\beta^*_{M\setminus S}=0$ (any parameter outside the true support $S$ is by definition 0) and hence $\mu_{Q_S S}=0$.
If $|M\setminus S|=0$, $|M\setminus S|=o(A_S)$ also immediately. Consider now the case $|M\setminus S|\neq 0$. By Assumption~\hyperlink{lab:A2}{A2}, the last term in~\eqref{eq:nonoverfittingcond} is nonnegative, and hence
$$\frac{|M\setminus S|}{A_S}
= \frac{|M\setminus S|}{\gamma \Delta_{MS}+\frac{1-\gamma}{6}\mu_{Q_SM}}
\leq \bigg[ \gamma\sum_{j=1}^b\frac{|M_j \setminus S_j|}{|M\setminus S|}\kappa_j \bigg]^{-1} \leq \Big[ \gamma\min_{j=1,\ldots,b}\kappa_j \Big]^{-1} $$
where the last inequality follows from $\sum_{j=1}^b\frac{|M_j \setminus S_j|}{|M\setminus S|}=1$. By Assumption~\hyperlink{lab:A1}{A1} we have that $\min_j \kappa_j \to \infty$ as $n \rightarrow \infty$, and hence $|M\setminus S|=o(A_S)$. Thus,
by Lemma~\ref{lem:boundexpnc}, for any $\psi\in(0,1)$ and $\bbeta^*\in \bB$, and all $n$ large enough,
$
\E(NC(M))\;\leq\;e^{-\psi A_S}.
$

For the second step of the proof, let $A_S^*$ be the lower bound for $A_S$ given in~\eqref{eq:nonoverfittingcond}. That is 
$$A_S^*\;:=\;\gamma\sum_{j=1}^b|M_j \setminus S_j|\kappa_j + \sum_{j=1}^b |S_j\setminus M_j |\left(\tfrac{1-\gamma}{6} n \rho(\bX) {\beta^*_{\min,j}}^{2} - \gamma\kappa_j\right).$$
By~\eqref{eq:nonoverfittingcond}, we have, for any $\bbeta^*\in \bB$ and all $n$ large enough,
\begin{equation}\label{eq:boundexp}
    \E(NC(M))\;\leq\;e^{-\psi A_S^*}.
\end{equation}
Assumption~\hyperlink{lab:A2}{A2} implies there exist $g'_j \to \infty$ such that
\begin{equation}\label{eq:ogbea}
    \frac{(1-\gamma) n\rho(\bX)}{6}{\beta_{\min,j}^*}^2 \,- \,\kappa_j \;=\; \ln(s_j) + g'_{j}.
\end{equation}
Let $\delta \in (0,1)$ and denote $\bar{m}_j=\max\big\{\frac{2\ln(p_j-s_j)}{f_j},\frac{2\ln(s_j)}{g'_{j}}\big\}$, where $f_j$ is given in Assumption~\hyperlink{lab:A1}{A1}. 
Take $\psi=\max_{j=1,\ldots,b}\frac{\xi + \delta + \bar{m}_j}{1+\bar{m}_j}$ for some $\xi \in(0, 1- \delta)$ then $\psi \in (0,1)$ and we have, for every $j=1,\ldots,b$,
\begin{align}
    &\psi \;>\; \frac{\delta + \frac{2\ln(p_j-s_j)}{f_j}}{1+\frac{2\ln(p_j-s_j)}{f_j}} \;=\; \frac{\delta f_j/2 + \ln(p_j-s_j)}{f_j/2 +\ln(p_j-s_j)} \label{eq:psif}\\
    &\psi \;>\; \frac{\delta + \frac{2\ln(s_j)}{g'_j}}{1+\frac{2\ln(s_j)}{g'_j}} \;=\;  \frac{\delta g'_j/2 + \ln(s_j)}{g'_j/2 +\ln(s_j)} \;\geq\; \frac{\delta g'_j/2 + \ln(s_j)}{g'_j +\ln(s_j)}.\label{eq:psig}
\end{align}
In Assumption~\hyperlink{lab:A2}{A2}, $\gamma$ is defined as $\gamma:=\tfrac12(1+\max_j\ln(p_j-s_j)/\kappa_j)$, we then have 
$$
\gamma\kappa_j\;\geq\;\frac12 \Big(1+\frac{\ln(p_j-s_j)}{\kappa_j}\Big)\kappa_j\;=\;\ln(p_j-s_j)+\frac12(\kappa_j-\ln(p_j-s_j)) \;=\; \ln(p_j-s_j)+ \frac12f_j.
$$
Hence, by~\eqref{eq:psif}, we have
\begin{equation}\label{eq:ooofjfj}
\psi\gamma\kappa_j \;\geq\; \psi\big(\ln(p_j-s_j)+ \frac12f_j\big) \;\geq\;  \ln(p_j-s_j)+ \delta\frac12f_j.
\end{equation}
Further,
\begin{equation}\label{eq:ooofjfj2}
\psi\left(\tfrac{1-\gamma}{6} n \rho(\bX) {\beta^*_{\min,j}}^{2} - \gamma\kappa_j\right) \;\geq\; \psi\left(\ln(s_j)+g_j'\right) \;\geq\; \ln(s_j)+\delta \frac{1}{2}g'_j
\end{equation}
where the first inequality follows from~\eqref{eq:ogbea} and the second inequality from~\eqref{eq:psig}.

In~\eqref{eq:boundexp}, $\psi A^*_S=\sum_{j=1}^b|M_j \setminus S_j|\psi\gamma\kappa_j + \sum_{j=1}^b |S_j\setminus M_j |\psi\left(\tfrac{1-\gamma}{6} n \rho(\bX) {\beta^*_{\min,j}}^{2} - \gamma\kappa_j\right)$. Then by~\eqref{eq:ooofjfj} and~\eqref{eq:ooofjfj2}, we get, for any $\bbeta^*\in \bB$,
\begin{equation}\label{eq:fyv}
\E(NC(M))\;\leq\;\exp\left\{- \sum_{j=1}^b |M_j\setminus S_j|(\ln(p_j-s_j)+\delta \tfrac{f_j}{2})-\sum_{j=1}^b |S_j\setminus M_j|(\ln(s_j)+\delta \tfrac{g'_j}{2})\right\}.    
\end{equation}

For the final step of the proof, denote $\mathcal{S}= \sum_{M \in \M\setminus \{S\}} \E\left(NC(M)\right)$ for convenience. Since~\eqref{eq:fyv} holds for any $\bbeta^*\in \bB$, we have,
\begin{align*}
   \sup_{\bbeta^*\in \bB} \mathcal{S} &\leq \sum_{M \in \M\setminus \{S\}}e^{- \sum_{j=1}^b |M_j\setminus S_j|\big(\ln(p_j-s_j)+\delta \tfrac{f_j}{2}\big)-\sum_{j=1}^b |S_j\setminus M_j|\big(\ln(s_j)+\delta \tfrac{g'_j}{2}\big)}.
\end{align*}
Observe that if $|M_j\setminus S_j|=0$ and $|S_j\setminus M_j|=0$ for all $j$, then $M=S$ and the summand in the right-hand side above is 1. Then by adding and subtracting 1 we get
$$\sup_{\bbeta^*\in \bB} \mathcal{S} \leq -1+ \sum_{M \in \M}e^{- \sum_{j=1}^b |M_j\setminus S_j|\big(\ln(p_j-s_j)+\delta \tfrac{f_j}{2}\big)-\sum_{j=1}^b |S_j\setminus M_j|\big(\ln(s_j)+\delta \tfrac{g'_j}{2}\big)}.$$
We can split the sum in the right-hand side above into sums over the models that have the same number of inactive variables and missing the same number of truly active variables in every block. That is, the models $M$ such that for all $j$, $|M_j\setminus S_j|=u_j$ and $|S_j\setminus M_j|=w_j$ with $u_j \in \{0,\ldots,p_j-s_j\}$ and $w_j \in \{0,\ldots,s_j\}$. Denote those sums
$$S^{\bu}_{\bw}=\sum_{M \in \M:\forall \,j |M_j\setminus S_j|=u_j, |S_j\setminus M_j|=w_j } e^{- \sum_{j=1}^b u_j\big(\ln(p_j-s_j)+\delta \tfrac{f_j}{2}\big)-\sum_{j=1}^b w_j\big(\ln(s_j)+\delta \tfrac{g'_j}{2}\big)}.$$
We get
\begin{equation}\label{eq:uygfew}
    \sup_{\bbeta^*\in \bB} \mathcal{S} \leq -1+\sum_{w_1=0}^{s_1}\cdots\sum_{w_b=0}^{s_b}\sum_{u_1=0}^{p_1-s_1}\cdots\sum_{u_b=0}^{p_b-s_b}S^{\bu}_{\bw}.
\end{equation}
The number of models having, for all $j$, $u_j$ inactive parameters and missing $w_j$ out of the $s_j$ active parameters
is $\prod_{j=1}^b {\binom{p_j-s_j}{u_j}}{\binom{s_j}{w_j}}$. We thus have
\begin{align*}
    S^{\bu}_{\bw}&= \bigg(\prod_{j=1}^b {\binom{p_j-s_j}{u_j}}{\binom{s_j}{w_j}}\bigg)e^{- \sum_{j=1}^b u_j\big(\ln(p_j-s_j)+\delta \tfrac{f_j}{2}\big)-\sum_{j=1}^b w_j\big(\ln(s_j)+\delta \tfrac{g'_j}{2}\big)} \\
    &= \prod_{j=1}^b {\binom{p_j-s_j}{u_j}}e^{- u_j\big(\ln(p_j-s_j)+\delta \tfrac{f_j}{2}\big)}{\binom{s_j}{w_j}}e^{-w_j\big(\ln(s_j)+\delta \tfrac{g'_j}{2}\big)}.
\end{align*}
Inputting the expression above in~\eqref{eq:uygfew} gives
\begin{align*}
    \sup_{\bbeta^*\in \bB} \mathcal{S} 
    &\leq -1+\sum_{w_1=0}^{s_1}\cdots\sum_{w_b=0}^{s_b}\sum_{u_1=0}^{p_1-s_1}\cdots\sum_{u_b=0}^{p_b-s_b}\prod_{j=1}^b {\binom{p_j-s_j}{u_j}}e^{- u_j\big(\ln(p_j-s_j)+\delta \tfrac{f_j}{2}\big)}{\binom{s_j}{w_j}}e^{-w_j\big(\ln(s_j)+\delta \tfrac{g'_j}{2}\big)}\\
    &\leq - 1 + \prod_{j=1}^b\left(1+\sum_{u_j=1}^{p_j-s_j}{\binom{p_j-s_j}{u_j}}e^{- u_j(\ln(p_j-s_j)+\delta \tfrac{f_j}{2})}\right)\left(1+
    \sum_{w_j=1}^{s_j}{\binom{s_j}{w_j}}
    e^{-w_j(\ln(s_j)+\delta \tfrac{g'_j}{2})}\right) 
\end{align*}
where the second inequality follows from first factorizing over terms in $u_j$ and $w_j$ and then taking the term in 0 out of every sum. A standard bound on binomial coefficient for $1\leq k\leq n$ is
\begin{equation}\label{eq:upperboudbinom}
    \binom{n}{k} \leq \left(\frac{n\,e}{k}\right)^k \leq \left(n\,e\right)^k = e^{k(\ln(n)+1)}.
\end{equation}
Then
\begin{equation}\label{eq:upperbound_sumprob}
    \sup_{\bbeta^*\in \bB} \mathcal{S} \;\;\leq\;\; - 1+\prod_{j=1}^b\left(1+\sum_{u_j=1}^{p_j-s_j}e^{-u_j\left(\delta \tfrac{f_j}{2}-1\right)}\right)\left(1+
    \sum_{w_j=1}^{s_j}    e^{-w_j\left(\delta \tfrac{g'_j}{2}-1\right)}\right).
\end{equation}
Denote
$$
d_j\;=\;e^{1-\delta \tfrac{f_j}{2}},\qquad h_j\;=\;e^{1-\delta \tfrac{g'_j}{2}}
$$
where both expressions go to zero as $n$ increases since $f_j\to\infty$ and $g'_j\to\infty$.
For every $j$, by the properties of geometric sums, we have
\begin{align*}
    &1+\sum_{u_j=1}^{p_j-s_j}e^{-u_j\left(\delta \tfrac{f_j}{2}-1\right)}
    =\frac{1-d_j^{p_j-s_j+1}}{1-d_j} \\
    &1+\sum_{w_j=1}^{s_j}    e^{-w_j\left(\delta \tfrac{g'_j}{2}-1\right)} =\frac{1-h_j^{s_j+1}}{1-h_j}.
\end{align*}
Since both expressions converge to $1$ as $n$ grows, and $b$ is fixed we get that
\begin{equation}\label{eq:NPFENK}
\lim_{n \to \infty} \sup_{\bbeta^*\in \bB} \mathcal{S}=\lim_{n \to \infty} \sup_{\bbeta^*\in \bB}\sum_{M \in \M\setminus \{S\}}\E\left(NC(M)\right)= 0.
\end{equation}
By Lemma~\ref{lem:L0toL1convergence}, for any $\bbeta^*\in \bB$, $P(\hat{S}^{ei}\neq S)\leq \,2\, \mathcal{S}$, then $\lim_{n \to \infty} \inf_{\bbeta^*\in \bB}P(\hat{S}^{ei}= S)=1$.

 \subsubsection{Proof of Theorem~\ref{prop:fullrecovreg}}

First, we prove the upper bound on $P(\hat{S}^{ei} \neq S)$ in~\eqref{eq:rateconvreg}. We assume here \hyperlink{lab:A1}{A1} and \hyperlink{lab:A2}{A2}. Under the same assumptions, in the proof of Theorem~\ref{theo:suffcondlinearmodel}, the following bound was shown in~\eqref{eq:upperbound_sumprob} for any $\bbeta^*\in \bB$:
\begin{equation}\label{eq:prodconstreg}
 \sum_{M \in \M\setminus \{S\}} \E\left(NC(M)\right)
    \;\;\leq\;\; - 1+\prod_{j=1}^b\left(1+\sum_{u_j=1}^{p_j-s_j}e^{-u_j\left(\delta \tfrac{f_j}{2}-1\right)}\right)\left(1+
    \sum_{w_j=1}^{s_j}    e^{-w_j\left(\delta \tfrac{g'_j}{2}-1\right)}\right) .
\end{equation}
Denote $S(u_j)=\sum_{u_j=1}^{p_j-s_j}e^{-u_j\left(\delta \tfrac{f_j}{2}-1\right)}$,  $S(w_j)=\sum_{w_j=1}^{s_j}    e^{-w_j\left(\delta \tfrac{g'_j}{2}-1\right)}$, $d_j\;=\;e^{1-\delta \tfrac{f_j}{2}}$, and $h_j\;=\;e^{1-\delta \tfrac{g'_j}{2}}$. For every $j$, we have, by the properties of geometric sums:
\begin{align*}
    &S(u_j)
    =d_j\,\frac{1-d_j^{p_j-s_j}}{1-d_j} \\
    &S(w_j) =h_j\,\frac{1-h_j^{s_j}}{1-h_j}.
\end{align*}
Developing the product in the right-hand side in~\eqref{eq:prodconstreg} and reordering the resulting terms gives
$$\sup_{\bbeta^*\in \bB}\sum_{M \in \M\setminus \{S\}} \E\left(NC(M)\right)\;\;\leq\;\;-1+1+\sum_{j=1}^b\big[S(u_j)+S(w_j)\big]+\mathcal{R}$$
where all the terms in $\mathcal{R}$ are product of two or more of the sums $S(u_1),\ldots,S(u_b),S(w_1),\ldots,S(w_b)$. Given that $\delta>0$, $f_j \to \infty$ and $g_j' \to \infty$ by assumption, and hence $d_j \to 0$ and $h_j \to 0$, the $S(u_j)$ and $S(w_j)$ are smaller than 1 for all sufficiently large $n$ for all $j$. Then each of the $2^{2b}-2b-1$ terms in $\mathcal{R}$ is bounded above by $\sum_{j=1}^b\big[S(u_j)+S(w_j)\big]$ and we get, for every $n$ large enough,
 \begin{equation} \label{eq:fierg}
 \sup_{\bbeta^*\in \bB}\sum_{M \in \M\setminus \{S\}} \E\left(NC(M)\right)
 \leq (2^{2b}-2b)\sum_{j=1}^b \bigg[d_j\,\frac{1-d_j^{p_j-s_j}}{1-d_j}+ h_j\,\frac{1-h_j^{s_j}}{1-h_j}\bigg ]. \end{equation} 
Denote
\begin{equation} \label{eq:fierg2}r=\max_{j=1,\ldots,b}\bigg\{\frac{1-d_j^{p_j-s_j}}{1-d_j} \,,\,\frac{1-h_j^{s_j}}{1-h_j} \bigg\}.\end{equation} 
 By Lemma~\ref{lem:L0toL1convergence},~\eqref{eq:fierg} and~\eqref{eq:fierg2}, we then obtain:
$$\sup_{\bbeta^*\in \bB}P(\hat{S}^{ei} \neq S)\;\leq\;  2 \sup_{\bbeta^*\in \bB}\sum_{M \in \M\setminus \{S\}} \E\left(NC(M)\right)
     \;\leq\; 2 (2^{2b}-2b) r\,e\,\,\sum_{j=1}^b e^{-\delta \tfrac{f_j}{2}} +e^{-\delta \tfrac{g'_j}{2}}$$
By the definition of $f_j$ in Assumption~\hyperlink{lab:A1}{A1}, that of $g'_j$ in~\eqref{eq:ogbea}, and the fact that $2e<6$,  we get, for every $n$ large enough, that
\begin{equation}\label{eq:gpkr}
    \sup_{\bbeta^*\in \bB} P(\hat{S}^{ei} \neq S)\;\leq\; (2^{2b}-2b)6r\,\,\sum_{j=1}^b\,\, e^{-\tfrac{\delta}{2}\big[\kappa_j-\ln(p_j-s_j)\big]} + e^{-\tfrac{\delta}{2}\big[\frac{(1-\gamma) n\rho(\bX)}{6}{\beta_{\min,j}^*}^2 - \kappa_j -\ln(s_j) \big]}.
\end{equation}

We have $\frac{(1-\gamma) n\rho(\bX)}{6}{\beta_{\min,j}^*}^2 - \kappa_j > \Big(\sqrt{\frac{(1-\gamma) n\rho(\bX)}{6}}{\beta_{\min,j}^*} - \sqrt{\kappa_j}\Big)^2$ and then,
$$\sup_{\bbeta^*\in \bB} P(\hat{S}^{ei} \neq S)\;\leq\; (2^{2b}-2b)6r\,\,\sum_{j=1}^b\,\, e^{-\tfrac{\delta}{2}\big[\kappa_j-\ln(p_j-s_j)\big]} + e^{-\tfrac{\delta}{2}\big[(\sqrt{\frac{(1-\gamma) n\rho(\bX)}{6}}{\beta_{\min,j}^*} - \sqrt{\kappa_j})^2 -\ln(s_j) \big]}.
$$
We have $d_j \to 0$ and $h_j \to 0$, then for every $n$ large enough $\frac{1-d_j^{p_j-s_j}}{1-d_j}\to1$ and $\frac{1-h_j^{s_j}}{1-h_j}\to1$ for all $j$, and $r\to 1$. It follows that for any $c>(2^{2b}-2b)6$ for $n$ large enough the bound in~\eqref{eq:rateconvreg} holds. 

Second, we prove that, if for all $j=1,\ldots,b$, it holds that
\begin{equation}\label{betamincond:minimalreg}
    \lim_{n\to \infty}\frac{g(\gamma^*)\sqrt{ n\rho(\bX)/6}\;\beta_{\min,j}^*}{\sqrt{\ln(p_j-s_j)} + \sqrt{\ln(s_j)}} \geq 1
\end{equation}
with $g(\gamma^*)=\sqrt{(-1 + 2 \gamma^*)(1-\gamma^*)}/(1 + \sqrt{2 - 2 \gamma^*})$ then $\kappa^*_j$ defined in~\eqref{eq:oraclepenreg} satisfies Assumptions~\hyperlink{lab:A1}{A1} and \hyperlink{lab:A2}{A2}. Indeed, under~\eqref{betamincond:minimalreg}, for every $j$, there exists a sequence $x_j$ such that $\lim_{n\to\infty} x_j\geq1$ and $g(\gamma^*)\sqrt{\frac{n\rho(\bX)}{6}}\beta^*_{\min,j}=x_j\big(\sqrt{\ln(p_j-s_j)} + \sqrt{\ln(s_j)}\big)$. Let $v_j= x_j \sqrt{(1-\gamma^*)}g(\gamma^*)^{-1}$, then $\sqrt{\frac{(1-\gamma^*)n\rho(\bX)}{6}}\beta^*_{\min,j}=v_j\big(\sqrt{\ln(p_j-s_j)} + \sqrt{\ln(s_j)}\big)$. Denote $a=\sqrt{\ln(p_j-s_j)}$ and $a'=\sqrt{\ln(s_j)}$.
We have
$$
\sqrt{\kappa^*_j} \;=\;\frac{v_j}{2} (a+a')+\frac{(a^2-a'^2)}{2v_j(a+a')}\;=\;\frac{v_j}{2} (a+a')+\frac{a-a'}{2v_j}\;=\;\frac{v_j^2+1}{2v_j}a+\frac{v_j^2-1}{2v_j}a'.
$$
Since $\sqrt{(1-\gamma^*)}g(\gamma^*)^{-1}>1$ for $\gamma^*\in(1/2,1)$ and $\lim_{n\to\infty} x_j\geq1$, for $n$ large enough, $v_j>1$. We also have $a'\geq 0$. It follows that $a'(v_j^2-1)/2v_j \geq 0$ for $n$ large enough and
\begin{equation}\label{eq:ojdv}
\sqrt{\kappa^*_j} =\frac{v_j^2+1}{2v_j}a+\frac{v_j^2-1}{2v_j}a'  \geq  \Big(\frac{v_j^2+1}{2v_j}+1-1\Big)a = \Big(1+\frac{(v_j-1)^2}{2v_j}\Big)\sqrt{\ln(p_j-s_j)} 
\end{equation}
which implies Assumption~\hyperlink{lab:A1}{A1} since $v_j>1$ for $n$ sufficiently large. We also have
\begin{equation}\label{eq:oabpx}
\frac{\sqrt{(1-\gamma^*) n\rho(\bX)}}{6}{\beta_{\min,j}^*}-\sqrt{\kappa^*_j} = \frac{v_j^2-1}{2v_j}a+\frac{v_j^2+1}{2v_j}a' \geq \Big(1+\frac{(v_j-1)^2}{2v_j}\Big)\sqrt{\ln(s_j)}.
\end{equation}
Let $\gamma = \tfrac12(1+\max_j\ln(p_j-s_j)/\kappa^*_j)$ as defined in Assumption \hyperlink{lab:A2}{A2}. By~\eqref{eq:ojdv}, $\gamma = \tfrac12(1+\max_j(1+(v_j-1)^2/(2v_j))^{-2})$. Since $\lim_{n\to\infty} x_j\geq 1$, for sufficiently large $n$, $v_j\geq \sqrt{(1-\gamma^*)}g(\gamma^*)^{-1}$. Simple algebra shows the latter inequality implies $\gamma=\tfrac12(1+\max_j\ln(p_j-s_j)/\kappa^*_j) < \gamma^*$ and $1-\gamma > 1-\gamma^*$. Then, by~\eqref{eq:oabpx}, 
\begin{equation*}
\frac{\sqrt{(1-\gamma) n\rho(\bX)}}{6}{\beta_{\min,j}^*}-\sqrt{\kappa^*_j} \geq \Big(1+\frac{(v_j-1)^2}{2v_j}\Big)\sqrt{\ln(s_j)}
\end{equation*}
for every sufficiently large $n$. Since $\lim_{n\to\infty} x_j\geq1$ and $\sqrt{(1-\gamma^*)}g(\gamma^*)^{-1}>1$ for $\gamma^*\in(1/2,1)$, $v_j>1$ for $n$ sufficiently large and  Assumption~\hyperlink{lab:A2}{A2} holds.

Finally, we prove~\eqref{eq:oraclerateconvreg}. Since Assumptions~\hyperlink{lab:A1}{A1} and \hyperlink{lab:A2}{A2} hold for the $\kappa^*_j$, by the first part of the theorem, there exists $c>0$ such that, for every $n$ large enough,
\begin{equation*}
 \sup_{\bbeta^*\in \bB} P(\hat{S}^{ei} \neq S)\;\leq\; c\,\sum_{j=1}^b\,\, e^{-\tfrac{\delta}{2}\big[\kappa_j^*-\ln(p_j-s_j)\big]} + 
    e^{-\tfrac{\delta}{2}\big[(\sqrt{\frac{(1-\gamma^*) n\rho(\bX)}{6}}{\beta_{\min,j}^*} - \sqrt{\kappa_j^*})^2 -\ln(s_j) \big]}.
\end{equation*}
Note that the $\kappa_j^*$ satisfy $\kappa^*_j-\ln(p_j-s_j)=\big(\sqrt{\frac{(1-\gamma^*) n\rho(\bX)}{6}}{\beta_{\min,j}^*} - \sqrt{\kappa_j^*}\big)^2 -\ln(s_j)$ for all $j$. We then have
\begin{equation}\label{eq:afi}
 \sup_{\bbeta^*\in \bB}P(\hat{S}^{ei} \neq S)\;\leq\; 2c\,\sum_{j=1}^b\,\, e^{-\tfrac{\delta}{2}\big[\kappa_j^*-\ln(p_j-s_j)\big]}.
\end{equation}
Denote \\$d =\frac12\sqrt{\frac{6}{(1-\gamma^*) n\rho(\bX)}}\frac{1}{\beta_{\min,j}^*}\big(\ln({p_j-s_j})-\ln({s_j})\big)$ such that $\sqrt{\kappa^*_j}=  \frac12\sqrt{\frac{(1-\gamma^*) n\rho(\bX)}{6}}{\beta_{\min,j}^*} + d$. Then
$$e^{-\frac{\delta}{2}\left(\kappa^*_j-\ln(p_j-s_j)\right)}
=e^{-\frac{\delta}{2}\left[\frac{(1-\gamma^*) n\rho(\bX)}{24}{\beta_{\min,j}^*}^2+d^2 - \frac{1}{2}\left(\ln (p_j- s_j)+\ln (s_j)\right)\right]}.$$
By considering separately the two possible maxima in $\ln \max\{p_j-s_j,s_j\}$, we get that
\begin{equation}\label{eq:oig}
    \frac{e^{-\frac{\delta}{2}\left(\kappa^*_j-\ln(p_j-s_j)\right)}}{e^{-\frac{\delta}{2}\left[\frac{(1-\gamma^*) n\rho(\bX)}{24}{\beta_{\min,j}^*}^2-\ln \max\{p_j-s_j,s_j\}\right]}}= 
e^{-\frac{\delta}{2}\left[d^2 + \frac{1}{2}\left|\ln (p_j- s_j)-\ln (s_j)\right|\right]} <1.
\end{equation}
It follows that, by~\eqref{eq:afi} and~\eqref{eq:oig},
$$\sup_{\bbeta^*\in \bB} P(\hat{S}^{ei} \neq S) \; \leq \; 2c\,\sum_{j=1}^b\,\, e^{-\frac{\delta}{2}\left[\frac{(1-\gamma^*) n\rho(\bX)}{24}{\beta_{\min,j}^*}^2-\ln \max\{p_j-s_j,s_j\}\right]}$$
which proves~\eqref{eq:oraclerateconvreg}.

\subsubsection{Proof of Proposition~\ref{prop:nectypeItypeII}}
Consistent recovery of $S$ implies that for any subset of models $\mathcal{N}$, $\max_{M\in \mathcal{N}}NC(M)/NC(S)<1$ with probability going to 1 as $n$ grows, where $NC$ is the normalized criterion defined in~\eqref{eq:CM}. 
In Part (i), for every $j=1,\ldots,b$, we obtain a necessary assumption on the $\kappa_j$ for selection consistency by considering $\mathcal{N}=\OO_j$ as defined in~\eqref{eq:defOj}. In Part (ii), we obtain a necessary assumption on the $\beta_{\min,j}^*$ by considering a model that under-fits by only one variable $i$ with $\beta^*_i=\beta_{\min,j}^*$. In Part (iii), we use (i) and (ii) to get, for every $j=1,\ldots,b$, a necessary assumption on the scaling of $(n,p_j,s_j,\beta_{\min,j}^*)$.

\textbf{Part (i).}\,\,
The event $\hat{S}^{ei}=S$ (correct recovery of $S$) requires the event 
$\max_{M\in \OO_j}\frac{NC(M)}{NC(S)}<1$. We have, for any $\bbeta^*\in \bB$,
$$P(\hat{S}^{ei}=S) 
\leq P\Big(\max_{M\in \OO_j}\frac{NC(M)}{NC(S)}<1\Big)
= P \left( \max_{M\in \OO_j} e^{C(M)-C(S)} <1 \right).$$
Using that $C(S)-C(M)=L_{SM}/2 + \Delta_{MS}$ for $L_{SM}$ and $\Delta_{MS}$ defined in~\eqref{eq:DeltaM}, we obtain
\begin{align}\label{eq:nOAJH1}
\sup_{\bbeta^*\in \bB}P(\hat{S}^{ei}=S) 
\leq P \left( \max_{M\in \OO_j} e^{-(\frac{1}{2} L_{SM} + \Delta_{MS})} < 1 \right)
=
P\Big(\max_{M\in \OO_j}L_{MS}<2\kappa_j\Big).
\end{align}
Taking square root on both sides of the inequality in the right-most probability in~\eqref{eq:nOAJH1}, we have
\begin{equation}\label{eq:oSBV}
    \sup_{\bbeta^*\in \bB}P(\hat{S}^{ei}=S) 
\leq P\Big(\max_{M\in \OO_j}\sqrt{L_{MS}}<\sqrt{2\kappa_j}\Big).
\end{equation}
By Lemma~\ref{lemma:s7}, for every $M\in \OO_j$, $L_{MS}\in \chi^2_1$ and there exists $Z_M\sim N(0,1)$ such that $\sqrt{L_{MS}}=|Z_M|$. Since for every $M\in \OO_j$, $|Z_M|\geq Z_M$, we have
\begin{equation}\label{eq:upperboundrecov}
    \sup_{\bbeta^*\in \bB}P(\hat{S}^{ei}=S) \leq  1-P\big(\max_{M\in \OO_j}Z_M\geq\sqrt{2\kappa_j}\Big).
\end{equation}
The set $\OO_j$ has cardinality $p_j-s_j$ which, by assumption, diverges, then by Theorem 3.4 in~\cite{hartigan}, for any $\varepsilon>0$,
$$P\big(\max_{M\in \OO_j}Z_M \geq  \sqrt{2\underline{\lambda}_j \ln(p_j-s_j)}(1-\varepsilon)\big) \to 1,$$
where $\underline{\lambda}_j$ is as defined in~\eqref{eq:ovpsk}.
If $\lim_{n\to\infty}\frac{\kappa_j}{\underline{\lambda}_j\ln(p_j-s_j)} <1$, then $\lim_{n \to \infty} P\big(\max_{M\in \OO_j}Z_M\geq\sqrt{2\kappa_j}\big) = 1$. Hence, by~\eqref{eq:upperboundrecov} we have that $\lim_{n \to \infty} \sup_{\bbeta^*\in \bB}P(\hat{S}^{ei}=S) = 0$, as claimed.

\textbf{Part (ii).}\,\,
Consider a $\bbeta^*\in \bB$ such that for some $i_{0}\in B_j$, $\beta^*_{i_{0}}=\beta_{\min,j}^*$ and the model $M$, that is under-fitting $S$ by variable $i_{0}$. The event $\hat{S}^{ei}=S$ (correct recovery of $S$) requires the event $\frac{NC(M)}{NC(S)}<1$ where $NC$ is the normalized criterion in~\eqref{eq:CM}. That is $S$ is preferred to model $M$. 
\begin{align*}
P(\hat{S}^{ei}=S) 
\leq P\Big(\frac{NC(M)}{NC(S)}<1\Big)
= P \left( e^{C(M)-C(S)} <1 \right)
\end{align*}
Using that $C(S)-C(M)=L_{SM}/2 + \Delta_{MS}$ for $L_{SM}$ and $\Delta_{MS}$ defined in~\eqref{eq:DeltaM}, and that $\Delta_{MS}=-\kappa_j$ we obtain
\begin{align}\label{eq:pokvfb}
P(\hat{S}^{ei}=S) 
\leq  P \left(e^{-(\frac{1}{2} L_{SM} + \Delta_{MS})} < 1 \right) =
P\Big(L_{SM}>2\kappa_j\Big).
\end{align}
Taking square root on both sides of the inequality in the right-most probability in~\eqref{eq:pokvfb}, we get
\begin{align}\label{eq:pokvfc}
P(\hat{S}^{ei}=S) 
\leq P\Big(\sqrt{L_{SM}}>\sqrt{2\kappa_j}\Big).
\end{align}
By Lemma~\ref{lemma:s7}, $L_{SM}\in \chi^2_1(\mu_{SM})$ and  there exists then $Z_M\sim N(\sqrt{\mu_{SM}},1)$ such that $\sqrt{L_{SM}}=|Z_M|$. We have
\begin{equation}\label{eq:upperboundrecov2}
    P(\hat{S}^{ei}=S) \leq  P\big(|Z_M|>\sqrt{2\kappa_j}\big).
\end{equation}
Consider first the case where $2\kappa_j>\mu_{SM}$. Using that $|Z_M|=|Z_M-\sqrt{\mu_{SM}}+\sqrt{\mu_{SM}}|\leq  |Z_M-\sqrt{\mu_{SM}}| + \sqrt{\mu_{SM}}$, we get that 
$$P\big(|Z_M|>\sqrt{2\kappa_j}\big)\leq P\big(|Z_M-\sqrt{\mu_{SM}}|>\sqrt{2\kappa_j}-\sqrt{\mu_{SM}}\big) = 2P\big(z>\sqrt{2\kappa_j}-\sqrt{\mu_{SM}}\big)$$
where $z\sim N(0,1)$ and the equality follows from the symmetry of the standard Gaussian distribution. Since $2\kappa_j>\mu_{SM}$, $P\big(z>\sqrt{2\kappa_j}-\sqrt{\mu_{SM}}\big)<1/2$, $P\big(|Z_M|>\sqrt{2\kappa_j}\big)<1$ and $P(\hat{S}^{ei}=S)<1$ too by~\eqref{eq:upperboundrecov2}.

Consider next the case where $2\kappa_j\leq\mu_{SM}$. By Lemma~\ref{lemma:3tailbounds}, we have
$$P\big(|Z_M|>\sqrt{2\kappa_j}\big)
    = P\left(z>\sqrt{2\kappa_j}-\sqrt{\mu_{SM}}\right) + P\left(z<-\sqrt{2\kappa_j}-\sqrt{\mu_{SM}}\right)$$ where $z\sim N(0,1)$. Since $\kappa_j\to+\infty$, $\sqrt{2\kappa_j}+\sqrt{\mu_{SM}}\to\infty$ and we have that $P(z<-\sqrt{2\kappa_j}-\sqrt{\mu_{SM}}) \to 0$.  
    Further, by Lemma~\ref{lem:upperboundnoncentralgen} with $T=S$, $\sqrt{\mu_{SM}}\leq \sqrt{n\bar{\lambda}}\beta_{\min,j}^*$. By assumption, we have $\lim_{n\to \infty}\sqrt{n\bar{\lambda}}\beta_{\min,j}^*-\sqrt{2\kappa_j} < \infty$. It follows that $\lim_{n\to \infty}\sqrt{\mu_{SM}}-\sqrt{2\kappa_j} < \infty$ and we have that $\underset{n \to \infty}{\lim}P\left(z>\sqrt{2\kappa_j}-\sqrt{\mu_{SM}}\right)<1$. Hence, we get $\underset{n \to \infty}{\lim}P\left(\hat{S}^{ei} = S \right)<1$, as claimed.

\textbf{Part (iii).}\,\,
 First, we re-write 
 $$\sqrt{n\bar{\lambda}}\beta_{\min,j}^*-\sqrt{2\underline{\lambda}_j\ln(p_j-s_j)} \,=\, \sqrt{n\bar{\lambda}}\beta_{\min,j}^*-\sqrt{2\kappa_j}+\sqrt{2\underline{\lambda}_j\ln(p_j-s_j)}\Big(\sqrt{\tfrac{\kappa_j}{\underline{\lambda}_j\ln(p_j-s_j)}}-1\Big).$$ 
 By assumption $\lim_{n\to\infty}\sqrt{n\bar{\lambda}}\beta_{\min,j}^*-\sqrt{2\underline{\lambda}_j\ln(p_j-s_j)} < \infty$ and there exists $k\in\R^+$ such that
  \begin{equation}\label{eq:proofnecbetaminbislinreg}
      \lim_{n\to \infty}\sqrt{n\bar{\lambda}}\beta_{\min,j}^*-\sqrt{2\kappa_j}+\sqrt{2\underline{\lambda}_j\ln(p_j-s_j)}\Big(\sqrt{\tfrac{\kappa_j}{\underline{\lambda}_j\ln(p_j-s_j)}}-1\Big)\leq k
  \end{equation}

Consider the case $\lim_{n\to \infty}\sqrt{\tfrac{\kappa_j}{\underline{\lambda}_j\ln(p_j-s_j)}}-1<0$. Since by assumption $\lim_{n\to+\infty}(p_j-s_j)^{\underline{\lambda}_j}=+\infty$, $p_j-s_j$ diverges and then by Part (i), we have $\lim_{n\to \infty}\sup_{\bbeta^*\in\bB}P(\hat{S}^{ei} = S)=0$. Now consider the case $\lim_{n\to \infty}\sqrt{\tfrac{\kappa_j}{\underline{\lambda}_j\ln(p_j-s_j)}}-1 \geq0$. Then, $\kappa_j\to\infty$ since by assumption $\lim_{n\to \infty}\underline{\lambda}_j\ln(p_j-s_j) >0$. Moreover, condition~\eqref{eq:proofnecbetaminbislinreg} implies that $\lim_{n\to \infty}\sqrt{n\bar{\lambda}}\beta_{\min,j}^*-\sqrt{2\kappa_j} \leq k$. By Part (ii), there exists $\bbeta^*\in\bB$ such that $\lim_{n\to \infty}P(\hat{S}^{ei} = S)< 1$.

\subsubsection{Proof of Corollary~\ref{cor:selconstimpossiblereg}}
Since $\hat{S}^{sd}$ is $\hat{S}^{ei}$ with $b=1$, the assumptions of Proposition~\ref{prop:nectypeItypeII} (iii) for $\hat{S}^{sd}$ are met and there exists $\bbeta^*\in\mathfrak{B}$ such that $\lim_{n \to \infty} P(\hat{S}^{sd} = S) < 1$. Since Assumptions~\hyperlink{lab:A1}{A1} and \hyperlink{lab:A2}{A2} hold, by Theorem~\ref{theo:suffcondlinearmodel} we have that $\inf_{\bbeta^*\in\mathfrak{B}}\lim_{n \to \infty} P(\hat{S}^{ei} = S)= 1$.

\subsubsection{Proof of Theorem~\ref{thm:convtoT}}
The proof strategy is the same as that of Theorem~\ref{theo:suffcondlinearmodel}, with suitable adjustments. 
The first step is to use Lemma~\ref{lem:boundexpnc} to bound $\E(NC(M))$ for every $M\not\in \T(\bkappa)$. The main difference is that
in the proof of Theorem~\ref{theo:suffcondlinearmodel} 
we took $T=S$ in Lemma~\ref{lem:boundexpnc}, whereas now we take  a model $T=T^M\in \T(\bkappa)$
that depends on $M$. 
Intuitively, $T^M$ contains large truly non-zero parameters that are missed by $M$, and hence $T^M$ should be chosen over $M$ asymptotically. More precisely,
we choose $T^M\in \T(\bkappa)$ such that 
$T^M\setminus M\subseteq S^L(\bkappa)$ and the elements in $M\setminus T^M$ are either inactive or in $S^S(\kappa)$. The latter condition 
and Assumption~\hyperlink{lab:A1}{A1} ensure that the assumptions of Lemma~\ref{lem:boundexpnc} are met for any $\bbeta^*\in \bB$. We then get a bound $\E(NC(M)) \leq e^{-\psi A_{T^M}}$ for every large enough $n$ and any $\psi \in (0,1)$, where $A_{T^M}=\gamma\Delta_{MT^M}+\tfrac{1-\gamma}{6}\mu_{Q_{T^M}M}$ (\emph{cf}~\eqref{eq:DeltaM} and~\eqref{eq:muQM}), $\gamma\in(1/2,1)$ is defined in~\eqref{eq:defSes} and $Q_{T^M}= M\cup T^M$. The second step is to obtain a lower bound for $A_{T^M}$, which gives an upper bound for $\E(NC(M))$ that holds for any $\bbeta^*\in \bB$ (distinct to that obtained in the proof of Theorem~\ref{theo:suffcondlinearmodel}).
The final step is to 
get an upper-bound $\sum_{M \in \M\setminus \T(\bkappa)} \E\left(NC(M)\right)$ that vanishes (as $n$ grows) under the assumptions of Theorem~\ref{thm:convtoT} for any $\bbeta^*\in \bB$. Then Lemma~\ref{lem:L0toL1convergencegen} immediately implies that $\sup_{\bbeta^*\in \bB}P(\hat{S}^{ei}\not \in \T(\bkappa))$ also vanishes.

For the first step of the proof, recall that the set $\T(\bkappa)$ contains all the models that are a superset of $S^L(\bkappa)$ and a subset of $S^L(\bkappa)\cup S^{I}(\bkappa)$. For any $M \not\in\T(\bkappa)$, take the unique $T^M\in\T(\bkappa)$ such that $T^M = \big[M\cap S^{I}(\bkappa)\big]\cup S^L(\bkappa) $, which implies $M\cap S^{I}(\bkappa)=T^M\cap S^{I}(\bkappa)$. 
That is, $T^M$ contains all the large signals plus the intermediate signals in $M$.
To show that $\E(NC(M)) \leq e^{-\psi A_{T^M}}$ we show that $A_{T^M}$ satisfies the conditions of Lemma~\ref{lem:boundexpnc}, taking $T=T^M$. That is, we wish to show that three conditions hold: $A_{T^M}>0$, $|M\setminus T^M|=o(A_{T^M})$, and $\mu_{Q_{T^M} T^M}=o(A_{T^M})$.

Observe that $\Delta_{MT^M}$, defined in~\eqref{eq:DeltaM}, can be rewritten as $\Delta_{MT^M}=\sum_{j=1}^b(|M_j\setminus T^M_j|-|T^M_j\setminus M_j|)\kappa_j$ and then:
$$ A_{T^M}=\sum_{j=1}^b|M_j\setminus T^M_j|\gamma\kappa_j+\tfrac{1-\gamma}{6}\mu_{Q_{T^M}M}-\sum_{j=1}^b|T^M_j\setminus M_j|\gamma\kappa_j $$
Since $\gamma <1$, $(1-\gamma)/6 >0$, by  Lemma~\ref{lem:boundrhogen}, for every $n\in \N$ we have, for any $\bbeta^*\in \bB$,
\begin{equation}\label{eq:nonoverfittingcond2}
   A_{T^M}\;\geq\; \sum_{j=1}^b|M_j \setminus T^M_j|\gamma\kappa_j + \sum_{j=1}^b |T^M_j\setminus M_j |\Big(\tfrac{1-\gamma}{6} n \rho(\bX) \min_{i\in T^M_j\setminus M_j}{\beta^*_i}^{2} - \gamma\kappa_j\Big).
\end{equation} 
We have that $T^M\subseteq \big(S^I(\kappa) \cup S^L(\bkappa)\big)$ since $T^M\in\T(\bkappa)$ and that $T^M\cap S^{I}(\bkappa) = M\cap S^{I}(\bkappa)$. It follows that $T^M\setminus M \subseteq S^L(\bkappa)$. By definition of $S^L(\bkappa)$, the rightmost component in~\eqref{eq:nonoverfittingcond2} is nonnegative and, if $|T^M\setminus M|\neq0$, it is positive, for every $n$ large enough. By Assumption~\hyperlink{lab:A1}{A1}, component $\gamma\sum_{j=1}^b|M_j \setminus T^M_j|\kappa_j$ is nonnegative, and, if $|M\setminus T^M|\neq0$, positive. Now, $M\neq T^M$ implies that necessarily $|M\setminus T^M|\neq0$ or $|T^M\setminus M|\neq0$ and then, for every $n$ large enough, $A_{T^M}>0$, establishing the first condition required by Lemma~\ref{lem:boundexpnc}. Regarding its second condition, if $|M\setminus T^M|=0$, we immediately have $|M\setminus T^M|=o(A_{T^M})$. If $|M\setminus T^M|\neq 0$, since the rightmost component in~\eqref{eq:nonoverfittingcond2} is nonnegative, we have
$$\frac{|M\setminus T^M|}{A_{T^M}}\leq \bigg[ \gamma\sum_{j=1}^b\frac{|M_j \setminus T^M_j|}{|M\setminus T^M|}\kappa_j \bigg]^{-1} \leq \bigg[ \gamma\min_{j=1,\ldots,b}\kappa_j \bigg]^{-1} $$
where the last inequality follows from $\sum_{j=1}^b\frac{|M_j \setminus T^M_j|}{|M\setminus T^M|}=1$. By Assumption~\hyperlink{lab:A1}{A1}, $\min_{j=1,\ldots,b}\kappa_j\to \infty$ as $n\to\infty$ and hence $|M\setminus T^M|=o(A_{T^M})$ when $|M\setminus T^M|\neq 0$ too.

Finally, consider the third condition in Lemma~\ref{lem:boundexpnc}.
In the proof of Theorem~\ref{theo:suffcondlinearmodel}, $\mu_{Q_SS}=o(A_S)$ is immediate because $ Q_S \setminus S= M\setminus S\subseteq S^C$, i.e. since all parameters in $M \setminus S$ are truly zero we have that $\mu_{Q_SS}=0$.
Here $M\setminus T^M$ is not necessarily a subset of $S^C$, hence $\mu_{Q_T^M T^M} \geq 0$. Note that, since $T^M\in\T(\bkappa)$, we have that $S^L(\bkappa)\subseteq T^M$. Moreover, we have $M\cap S^{I}(\bkappa)=T^M\cap S^{I}(\bkappa)$. It follows that $M\setminus T^M\subseteq (S^{I}(\bkappa) \cup S^L(\bkappa))^C$, that is the elements of $M\setminus T^M$ are either inactive or belong to $S^S(\kappa)$. If $M\cap S^{S}(\bkappa)=\emptyset$ then $M\setminus T^M\subseteq S^C$ and we immediately get $\mu_{Q_{T^M} T^M}=o(A_{T^M})$ because $\bbeta^*_{M \setminus T^M}=\mu_{Q_{T^M} T^M}=0$ for any $\bbeta^*\in \bB$. Assume now that $M\cap S^{S}(\bkappa)\neq\emptyset$. Using that the rightmost component in~\eqref{eq:nonoverfittingcond2} is nonnegative and Lemma~\ref{lem:upperboundnoncentralgen}, we have
$$\frac{\mu_{Q_{T^M} T^M}}{ A_{T^M}} \leq \frac{\mu_{Q_{T^M} T^M}}{ \gamma\sum_{j=1}^b|M_j \setminus T^M_j|\kappa_j} \leq \frac{n\,\bar{\lambda}\,\sum_{j=1}^b|(S_j\cap M_j )\setminus T^M_j|\,\max_{i\in (S_j\cap M_j )\setminus T^M_j}{\beta_{i}^*}^2}{ \gamma\sum_{j=1}^b|M_j \setminus T^M_j|\kappa_j}.$$
Observe that for all $j=1,\ldots,b$, $M_j \setminus T^M_j \supseteq (S_j\cap M_j )\setminus T^M_j$ and then $|M_j \setminus T^M_j| \geq |(S_j\cap M_j )\setminus T^M_j|$. We then get
$$\frac{\mu_{Q_{T^M} T^M}}{ A_{T^M}} \leq \frac{n\,\bar{\lambda}\,\sum_{j=1}^b|(S_j\cap M_j )\setminus T^M_j|\,\max_{i\in(S_j\cap M_j )\setminus T^M_j}{\beta_{i}^*}^2}{ \gamma\sum_{j=1}^b|(S_j\cap M_j ) \setminus T^M_j|\kappa_j}.$$
Moreover, since  $M\setminus T^M\subseteq (S^{I}(\bkappa) \cup S^L(\bkappa))^C$ as discussed earlier, we have, for all $j=1,\ldots,b$, $(S_j\cap M_j )\setminus T^M_j \subseteq S_j^{S}(\kappa_j)$. It follows $\max_{i\in(S_j\cap M_j )\setminus T^M_j}{\beta_{i}^*}^2 \leq \max_{i\in S_j^S}{\beta_{i}^*}^2$ and
\begin{equation}\label{eq:onbfsa}
  \frac{\mu_{Q_{T^M} T^M}}{ A_{T^M}} \leq \frac{n\,\bar{\lambda}\,\sum_{j=1}^b|(S_j\cap M_j )\setminus T^M_j|\,\max_{i\in S_j^{S}(\kappa_j)}{\beta_{i}^*}^2}{ \gamma\sum_{j=1}^b|(S_j\cap M_j ) \setminus T^M_j|\kappa_j}.  
\end{equation}
Let $\bar{r}:=\sum_{j=1}^b n\bar{\lambda}\max_{i\in S_j^{S}(\kappa_j)}{\beta_i^*}^2/(\gamma\kappa_j)$. We show next that $\bar{r}$ is an upper bound on $\mu_{Q_{T^M} T^M}/ A_{T^M}$. By restricting the sum in $\bar{r}$ to the $j$ such that $|(S_j\cap M_j )\setminus T^M_j|\neq 0$ and multiplying the numerator and denominator of the summand by $|(S_j\cap M_j )\setminus T^M_j|$, we get 
\begin{eqnarray}\label{eq:ovrwS}
    \bar{r}
    &\geq &\sum_{j=1,|(S_j\cap M_j )\setminus T^M_j|\neq 0}^b\frac{|(S_j\cap M_j )\setminus T^M_j|n\bar{\lambda}\max_{i\in S_j^{S}(\kappa_j)}{\beta_i^*}^2}{|(S_j\cap M_j )\setminus T^M_j|\gamma\kappa_j}
\end{eqnarray}
Note that for any collections $(\alpha_j,\delta_j)\in  \R\times \R\setminus\{0\}$, $j=1,\ldots,b$, we have
\begin{equation}\label{eq:truc}
    \sum_{j=1}^b\frac{\alpha_j}{\delta_j}=
\sum_{j=1}^b\frac{\alpha_j \frac{1}{\delta_j} (\delta_j + \sum_{l \neq j} \delta_l)}{\sum_{j=1}^b \delta_j}=
    \frac{\sum_{j=1}^b\alpha_j(1+\sum_{l\neq j} \frac{\delta_l}{\delta_j})}{\sum_{j=1}^b\delta_j}
\end{equation}
Using~\eqref{eq:truc} in the right-hand side of~\eqref{eq:ovrwS}, we get
\begin{align*}
     \bar{r}
    &\geq \frac{\sum_{j=1,|(S_j\cap M_j )\setminus T^M_j|\neq 0 }^b|(S_j\cap M_j )\setminus T^M_j|n\bar{\lambda}\max_{i\in S_j^{S}(\kappa_j)}{\beta_i^*}^2\Big(1+\sum_{l\neq j}\frac{|(S_l\cap M_l)\setminus T^M_l|\gamma\kappa_l}{|(S_j\cap M_j )\setminus T^M_j|\gamma\kappa_j}\Big)}{\sum_{j=1,|(S_j\cap M_j )\setminus T^M_j|\neq 0}^b|(S_j\cap M_j )\setminus T^M_j|\gamma\kappa_j} \\
    &\geq \frac{\sum_{j=1}^b|(S_j\cap M_j )\setminus T^M_j|n\bar{\lambda}\max_{i\in S_j^{S}(\kappa_j)}{\beta_i^*}^2}{\sum_{j=1}^b \big|(S\cap M_j)\setminus T^M_j\big|\gamma\kappa_j}
\end{align*}
where last inequality follows from $\Big(1+\sum_{l\neq j}\frac{|(S_l\cap M_l)\setminus T^M_l|\gamma\kappa_l}{|(S_j\cap M_j )\setminus T^M_j|\gamma\kappa_j}\Big)\geq 1$ for all $j$ and from the identity $\sum_{j=1,|(S_j\cap M_j )\setminus T^M_j|\neq 0}^b|(S_j\cap M_j )\setminus T^M_j|\gamma\kappa_j=\sum_{j=1}^b \big|(S\cap M_j)\setminus T^M_j\big|\gamma\kappa_j$. Then,
by~\eqref{eq:onbfsa},
$$\frac{\mu_{Q_{T^M} T^M}}{ A_{T^M}} \;\leq\; \bar{r} \;= \;\sum_{j=1}^b\frac{n\bar{\lambda}\max_{i\in S_j^{S}(\kappa_j)}{\beta_i^*}^2}{\gamma\kappa_j}.$$
By definition of $S_j^{S}(\kappa_j)$, $n\bar{\lambda}\max_{i\in S_j^{S}(\kappa_j)}{\beta_i^*}^2=o(\kappa_j)$ for all $j$ and $\mu_{Q_{T^M} T^M}=o(A_{T^M})$ for any $\bbeta^*\in \bB$ when $M\cap S^{S}(\bkappa)\neq\emptyset$ too.
We can now apply Lemma~\ref{lem:boundexpnc} and get that for every $M\in \mathcal M\setminus \T(\bkappa)$, for any $\bbeta^*\in \bB$  and $\psi\in(0,1)$, and for every $n$ large enough, $\E(NC(M))\;\leq\;e^{-\psi A_{T^M}}$.

The second step of the proof is to lower-bound $A_{T^M}=\gamma\Delta_{MT^M}+\tfrac{1-\gamma}{6}\mu_{Q_{T^M}M}$.
Let
$$A_{T^M}^*\;:=\;\gamma\sum_{j=1}^b|M_j \setminus T^M_j|\kappa_j + \sum_{j=1}^b |T^M_j\setminus M_j |\left(\tfrac{1-\gamma}{6} n \rho(\bX) \min_{i\in S_j^{L}(\kappa_j)}{\beta^*_i}^{2} - \gamma\kappa_j\right) $$
Recall that, for every $j$, $T^M_j\setminus M_j\subseteq S_j^{L}(\kappa_j)$. We have then $\min_{i\in T^M_j\setminus M_j}{\beta^*_i}^{2}\geq \min_{i\in S_j^{L}(\kappa_j)}{\beta^*_i}^{2}$, and by~\eqref{eq:nonoverfittingcond2}, $A_{T^M} \geq A_{T^M}^*$. It follows that for any $\psi\in(0,1)$ and for any $\bbeta^*\in\bB$ and every $n$ large enough,
\begin{equation}\label{eq:boundexp2}
    \E(NC(M))\;\leq\;e^{-\psi A_{T^M}^*}.
\end{equation}

To conclude the second part of the proof we lower-bound $\psi A^*_{T^M}=\sum_{j=1}^b|M_j \setminus T^M_j|\psi\gamma\kappa_j + \sum_{j=1}^b |T^M_j \setminus M_j|\psi\left(\tfrac{1-\gamma}{6} n \rho(\bX) \min_{i\in S_j^{L}(\kappa_j)}{\beta^*_i}^{2} - \gamma\kappa_j\right)$. To do this, we obtain a lower bound for $\psi \gamma \kappa_j$ and for $\psi\bigg(\tfrac{1-\gamma}{6} n \rho(\bX) \min_{i\in S_j^{L}(\kappa_j)}{\beta^*_i}^{2} - \gamma\kappa_j\bigg)$.

The definition of $S_j^{L}(\kappa_j)$ implies there exists some $g'_j \to \infty$ such that
\begin{equation}\label{eq:bngrws}
  \frac{(1-\gamma) n\rho(\bX)}{6}\min_{i\in S_j^{L}(\kappa_j)}{\beta^*_i}^{2}\,- \,\kappa_j \;=\; \ln(s_j) + g'_{j}.  
\end{equation}
Let $\delta \in (0,1)$ and denote $\bar{m}_j=\max\big\{\frac{2\ln(p_j-s_j)}{f_j},\frac{2\ln(s_j)}{g'_{j}}\big\}$, where $f_j$ is given in Assumption~\hyperlink{lab:A1}{A1}. 
Take $\psi=\max_{j=1,\ldots,b}\frac{\xi + \delta + \bar{m}_j}{1+\bar{m}_j}$ for some $\xi \in(0, 1- \delta)$ then $\psi \in (0,1)$, and we have, for every $j=1,\ldots,b$,
\begin{align}
    &\psi > \frac{\delta + \frac{2\ln(p_j-s_j)}{f_j}}{1+\frac{2\ln(p_j-s_j)}{f_j}} = \frac{\delta f_j/2 + \ln(p_j-s_j)}{f_j/2 +\ln(p_j-s_j)} \label{eq:psif2}\\
    &\psi > \frac{\delta + \frac{2\ln(s_j)}{g'_j}}{1+\frac{2\ln(s_j)}{g'_j}} = \frac{\delta g'_j/2 + \ln(s_j)}{g'_j/2 +\ln(s_j)} \geq \frac{\delta g'_j/2 + \ln(s_j)}{g'_j +\ln(s_j)}.\label{eq:psig2}
\end{align}
 Recall that Assumptions~\hyperlink{lab:A1}{A1} defines $f_j= \kappa_j - \ln(p_j-s_j)$ and $\gamma= \frac{1}{2}(1 + \max_j \frac{\ln(p_j-s_j)}{\kappa_j})$ respectively. Hence,
$$
\gamma\kappa_j\;\geq\;\frac12 \Big(1+\frac{\ln(p_j-s_j)}{\kappa_j}\Big)\kappa_j\;=\;\ln(p_j-s_j)+\frac12(\kappa_j-\ln(p_j-s_j)) \;=\; \ln(p_j-s_j)+ \frac12f_j.
$$
Hence, by~\eqref{eq:psif2}, we have
\begin{equation}\label{eq:ooofjfj3}
\psi\gamma\kappa_j \;\geq\; \psi\big(\ln(p_j-s_j)+ \frac12f_j\big) \;\geq\;  \ln(p_j-s_j)+ \delta\frac12f_j.
\end{equation}
Further,
\begin{equation}\label{eq:ooofjfj4}
\psi\bigg(\tfrac{1-\gamma}{6} n \rho(\bX) \min_{i\in S_j^{L}(\kappa_j)}{\beta^*_i}^{2} - \gamma\kappa_j\bigg) \;\geq\; \psi\left(\ln(s_j)+g_j'\right) \;\geq\; \ln(s_j)+\delta \frac{1}{2}g'_j.
\end{equation}
where the first inequality follows from~\eqref{eq:bngrws} and the second inequality from~\eqref{eq:psig2}. In~\eqref{eq:boundexp2}, $\psi A^*_{T^M}=\sum_{j=1}^b|M_j \setminus T^M_j|\psi\gamma\kappa_j + \sum_{j=1}^b |T^M_j \setminus M_j|\psi\left(\tfrac{1-\gamma}{6} n \rho(\bX) \min_{i\in S_j^{L}(\kappa_j)}{\beta^*_i}^{2} - \gamma\kappa_j\right)$. Then by~\eqref{eq:ooofjfj3} and~\eqref{eq:ooofjfj4}, we get, for any $\bbeta^*\in\bB$, 
\begin{equation}\label{eq:fyv2}
\E(NC(M))\;\leq\;\exp\left\{- \sum_{j=1}^b |M_j\setminus T^M_j|(\ln(p_j-s_j)+\delta \tfrac{f_j}{2})-\sum_{j=1}^b |T^M_j\setminus M_j|(\ln(s_j)+\delta \tfrac{g'_j}{2})\right\}.
\end{equation}

For the final step of the proof, denote $\mathcal{S}= \sum_{M \in \M\setminus \T(\bkappa)} \E\left(NC(M)\right)$ for convenience. Since~\eqref{eq:fyv2} holds for any $\bbeta^*\in\bB$, we have
\begin{align*}
    \sup_{\bbeta^*\in\bB }\mathcal{S} &\leq \sum_{M \in \M\setminus \T(\bkappa)}e^{- \sum_{j=1}^b |M_j\setminus T^M_j|\big(\ln(p_j-s_j)+\delta \tfrac{f_j}{2}\big)-\sum_{j=1}^b |T^M_j\setminus M_j|\big(\ln(s_j)+\delta \tfrac{g'_j}{2}\big)}.
\end{align*}
We split the sum in the right-hand side above into sums over models $M$
such that $T^M=T$ for some common $T\in \T(\bkappa)$. Denote for any $T\in \T$, $\M(T):=\{M \in \M\setminus \T(\bkappa) \,| T^M=T\}$, then
\begin{align}\label{eq:gepihw}
    \sup_{\bbeta^*\in\bB }\mathcal{S} &\leq \sum_{T \in \T(\bkappa)}\sum_{M \in \M(T)}e^{- \sum_{j=1}^b |M_j\setminus T_j|\big(\ln(p_j-s_j)+\delta \tfrac{f_j}{2}\big)-\sum_{j=1}^b |T_j\setminus M_j|\big(\ln(s_j)+\delta \tfrac{g'_j}{2}\big)}.
\end{align}
The right hand-side of~\eqref{eq:gepihw} is composed of a double sum over $T \in \T(\bkappa)$ and over $M \in \M(T)$. Consider the sum over $M \in \M(T)$, add $T$ to it, and denote it
\begin{equation}\label{eq:defST}
  \mathcal{S}(T)=\sum_{M \in \M(T) \cup T}e^{- \sum_{j=1}^b |M_j\setminus T_j|\big(\ln(p_j-s_j)+\delta \tfrac{f_j}{2}\big)-\sum_{j=1}^b |T_j\setminus M_j|\big(\ln(s_j)+\delta \tfrac{g'_j}{2}\big)}.
\end{equation}
In the summand in the right-hand side of~\eqref{eq:defST}, the case $M=T$ correspond to $|M_j\setminus T^M_j|=|T^M_j\setminus M_j|=0$ for all $j$ and the summand is then 1.
By~\eqref{eq:gepihw}, we then get that
\begin{align}\label{eq:gepihw2}
    \sup_{\bbeta^*\in\bB}\mathcal{S} &\leq \sum_{T \in \T(\bkappa)}\big(\mathcal{S}(T)-1\big).
\end{align}
For each $T=T^M$, we further split $\mathcal{S}(T)$ into sums over subsets of models $M$ that have $u_j$ more parameters than $T^M$ in block $j$, and are missing $w_j$ parameters from $T^M$ in block $j$.
Specifically, consider models $M$ such that, for all $j$, $|M_j\setminus T^M_j|=u_j$ and $|T^M_j\setminus M_j|=w_j$ with $u_j \in \{0,\ldots,p_j-s_j,\ldots,p_j-s_j+|S_j^{S}(\kappa_j)|\}$ and $w_j \in \{0,\ldots,|S_j^{L}(\kappa_j)|\}$. Denote by
$$S^{\bu}_{\bw}(T)=\sum_{M \in \M(T) \cup T:\forall \,j |M_j\setminus S_j|=u_j, |S_j\setminus M_j|=w_j } e^{- \sum_{j=1}^b u_j\big(\ln(p_j-s_j)+\delta \tfrac{f_j}{2}\big)-\sum_{j=1}^b w_j\big(\ln(s_j)+\delta \tfrac{g'_j}{2}\big)}.$$
We get
\begin{equation}\label{eq:uygfew2}
    \mathcal{S}(T)=\sum_{w_1=0}^{|S^L_1(\kappa)|}\cdots\sum_{w_b=0}^{|S^L_b(\kappa)|}\sum_{u_1=0}^{p_1-s_1+|S_1^S|}\cdots\sum_{u_b=0}^{p_b-s_b+|S_b^S|}S^{\bu}_{\bw}(T).
\end{equation}
The number of models missing, for all $j$, $w_j$ out of the $|S^L_j(\kappa)|$ large active parameters and having $u_j$ inactive or small active parameters from $B_j$ is $\prod_{j=1}^b {\binom{p_j-s_j+|S^S_j(\kappa)|}{u_j}}{\binom{|S^L_j(\kappa)|}{w_j}}$. We thus have
\begin{align*}
    S^{\bu}_{\bw}(T)&= \bigg(\prod_{j=1}^b {\binom{p_j-s_j+|S^S_j(\kappa)|}{u_j}}{\binom{|S^L_j(\kappa)|}{w_j}}\bigg)e^{- \sum_{j=1}^b u_j\big(\ln(p_j-s_j)+\delta \tfrac{f_j}{2}\big)-\sum_{j=1}^b w_j\big(\ln(s_j)+\delta \tfrac{g'_j}{2}\big)} \\
    &= \prod_{j=1}^b {\binom{p_j-s_j+|S^S_j(\kappa)|}{u_j}}e^{- u_j\big(\ln(p_j-s_j)+\delta \tfrac{f_j}{2}\big)}{\binom{|S^L_j(\kappa)|}{w_j}}e^{-w_j\big(\ln(s_j)+\delta \tfrac{g'_j}{2}\big)}.
\end{align*}
Inputting the expression above in~\eqref{eq:uygfew2} and factorizing over terms in $u_j$ and $w_j$ gives
\begin{align*}
    \mathcal{S}(T)\leq\prod_{j=1}^b & \Bigg(\sum_{u_j=0}^{p_j-s_j+|S^S_j(\kappa)|}{\binom{p_j-s_j+|S^S_j(\kappa)|}{u_j}}e^{- u_j(\ln(p_j-s_j)+\delta \tfrac{f_j}{2})}\Bigg)\\&.\Bigg(
    \sum_{w_j=0}^{|S^L_j(\kappa)|}{\binom{|S^L_j(\kappa)|}{w_j}}
    e^{-w_j(\ln(s_j)+\delta \tfrac{g'_j}{2})}\Bigg).
\end{align*}
By the bound in~\eqref{eq:upperboudbinom} and taking the terms in $u_j=0$ and $w_j=0$ out of the sums above, 
we have
\begin{align}\label{eq:ogr}
    \mathcal{S}(T)\leq\prod_{j=1}^b & \left(1+\sum_{u_j=1}^{p_j-s_j+|S_j^{S}(\kappa_j)|}e^{-u_j\left(\delta \tfrac{f_j}{2}-\ln\big(1+\frac{|S_j^{S}(\kappa_j)|}{p_j-s_j}\big)-1\right)}\right)\\&.\left(1+
    \sum_{w_j=1}^{|S_j^{L}(\kappa_j)|}    e^{-w_j\left(\delta \tfrac{g'_j}{2}+\ln\big(\frac{s_j}{|S_j^{L}(\kappa_j)|}\big)-1\right)}\right).\nonumber
\end{align}
Denote
$$
d_j\;=\;e^{1+\ln\big(1+\frac{|S_j^{S}(\kappa_j)|}{p_j-s_j}\big)-\delta \tfrac{f_j}{2}},\qquad h_j\;=\;e^{1-\ln\big(\frac{s_j}{|S_j^{L}(\kappa_j)|}\big)-\delta \tfrac{g'_j}{2}}.
$$
By assumption $|S_j^{S}(\kappa_j)|=O(p_j-s_j)$, and by definition of $|S_j^{L}(\kappa_j)|$ we have $s_j\geq|S_j^{L}(\kappa_j)|$. By Assumptions~\hyperlink{lab:A1}{A1} and the definition of $|S_j^{L}(\kappa_j)$, we also have $f_j\to\infty$ and $g'_j\to\infty$, and then $\lim_{n \to \infty}= d_j= \lim_{n \to \infty} h_j=0$. Using the properties of geometric series, 
 for every $j$ we have
\begin{eqnarray*}
    &1+\sum_{u_j=1}^{p_j-s_j+|S_j^{S}(\kappa_j)|}e^{-u_j\left(\delta \tfrac{f_j}{2}-\ln\big(1+\frac{|S_j^{S}(\kappa_j)|}{p_j-s_j}\big)-1\right)}
    =\frac{1-d_j^{p_j-s_j+|S_j^{S}(\kappa_j)|+1}}{1-d_j} \\
    &1+\sum_{w_j=1}^{|S_j^{L}(\kappa_j)|}    e^{-w_j\left(\delta \tfrac{g'_j}{2}+\ln\big(\frac{s_j}{|S_j^{L}(\kappa_j)|}\big)-1\right)} =\frac{1-h_j^{|S_j^{L}(\kappa_j)|+1}}{1-h_j},
\end{eqnarray*}
where both expressions converge to $1$ as $n$ grows. By~\eqref{eq:gepihw2} and~\eqref{eq:ogr}:
$$\sup_{\bbeta^*\in\bB}\mathcal{S}\leq\sum_{T\in\T}\Bigg(\prod_{j=1}^b\Big(\frac{1-d_j^{p_j-s_j+|S_j^{S}(\kappa_j)|+1}}{1-d_j}\Big)\Big(\frac{1-h_j^{|S_j^{L}(\kappa_j)|+1}}{1-h_j}\Big)-1\Bigg).$$
Each of the summand vanishes as $n\to\infty$. Moreover, by assumption $|S^I(\kappa)|=O(1)$ and then $|\T(\bkappa)|=2^{|S^I(\kappa)|}=O(1)$. We thus have $\lim_{n\to\infty}\sup_{\bbeta^*\in\bB}\mathcal{S}=\lim_{n\to\infty}\sup_{\bbeta^*\in\bB}\sum_{M \in \M\setminus \T(\bkappa)} \E\left(NC(M)\right)=0$.

Further, by Lemma~\ref{lem:L0toL1convergencegen}, 
$\sup_{\bbeta^*\in\bB}P(\hat{S}^{ei} \not\in \T(\bkappa) ) \,\,\leq \,\, (|\T(\bkappa)|+1) \sup_{\bbeta^*\in\bB}\mathcal{S} = (2^{|S^{I}(\bkappa)|}+1) \sup_{\bbeta^*\in\bB}\mathcal{S}$. Since $\sup_{\bbeta^*\in\bB}\mathcal{S}$ vanishes and $|S^{I}(\bkappa)|=O(1)$, $\lim_{n\to\infty}\inf_{\bbeta^*\in\bB}P(\hat{S}^{ei} \in \T(\bkappa) )=1$ as claimed.

\subsection{Proofs of Section~\ref{sec:informed_l0}}\label{suppsec:proofsempbayes}

\subsubsection{Proof of Proposition~\ref{prop:shatconsistence}}
Denote
$$A_j:=\frac{1}{p_j}\sum_{i \in B_j}\sum_{M \in \T(\bkappa)|i\in M}NC(M) \quad\text{and}\quad  C_j:=\frac{1}{p_j}\sum_{i \in B_j}\sum_{M \in \M\setminus \T(\bkappa)|i\in M}NC(M).$$
For every $j=1,\ldots,b$, we have the decomposition
\begin{gather}\label{eq:decompshat}
    \frac{\hat{s}_j}{p_j}
    =\frac{\sum_{i \in B_j} \sum_{M \in \M | i \in M} NC(M)}{p_j}
   =A_j\,+\, C_j.
\end{gather}
To show the lower bound on $\hat{s}_j/p_j$, we decompose $A_j$
\begin{eqnarray}
  A_j
   &=& \sum_{M \in \T(\bkappa)}NC(M)\sum_{i \in B_j}\frac{I(i\in M_j)}{p_j}\nonumber\\
   &=& \sum_{M \in \T(\bkappa)}NC(M)\sum_{i \in S_j^{L}(\kappa_j)}\frac{I(i\in M_j)}{p_j}+\sum_{M \in \T(\bkappa)}NC(M)\sum_{i \in B_j\setminus S_j^{L}(\kappa_j)}\frac{I(i\in M_j)}{p_j}\nonumber\\
   &=& \frac{|S_j^{L}(\kappa_j)|}{p_j}\sum_{M \in \T(\bkappa)}NC(M)+\sum_{M \in \T(\bkappa)}NC(M)\sum_{i \in B_j\setminus S_j^{L}(\kappa_j)}\frac{I(i\in M_j)}{p_j}\label{eq:wigr}.
\end{eqnarray}
where the last equality follows from $I(i\in M_j)=1$ for all $i\in S^L(\bkappa)$ when $M\in\T(\bkappa)$. The rightmost term above and $C_j$ are nonnegative, then by the linearity of the expectation
$$\E\Big(\frac{\widehat{s_j}}{p_j}\Big)\geq\frac{|S_j^{L}(\kappa_j)|}{p_j}\sum_{M \in \T(\bkappa)}\E(NC(M)).$$
By Theorem~\ref{thm:convtoT}, $\lim_{n\to\infty}\sum_{M \in \T(\bkappa)}\E(NC(M))=1$. It follows that $\lim_{n\to\infty}\E\Big(\frac{\widehat{s_j}}{p_j}\Big)\geq \frac{|S_j^{L}(\kappa_j)|}{p_j}$ for every $j=1,\ldots,b$.

We now prove the upper bound. Recall that $\T(\bkappa)$ by definition includes models that have no small signals, i.e. all parameters are in $S^L(\bkappa) \cup S^I(\kappa)$. That is,
for all $M\in\T(\bkappa)$, we have that $I(i\in M_j)=0$ for all $i\in B_j\setminus(S_j^{L}(\kappa_j)\cup S_j^{I}(\kappa_j))$. Hence, $A_j$ in~\eqref{eq:wigr} satisfies
\begin{align*}
    A_j
   &= \frac{|S_j^{L}(\kappa_j)|}{p_j}\sum_{M \in \T(\bkappa)}NC(M)+\sum_{M \in \T(\bkappa)}NC(M)\sum_{i \in S_j^{I}(\kappa_j)}\frac{I(i\in M_j)}{p_j}\\
   &\leq\frac{|S_j^{L}(\kappa_j)|}{p_j}\sum_{M \in \T(\bkappa)}NC(M)+ \frac{|S_j^{I}(\kappa_j)|}{p_j}\sum_{M \in \T(\bkappa)}NC(M)
\end{align*}
where the inequality follows from $\sum_{i \in S_j^{I}(\kappa_j)} I(i\in M_j) \leq |S_j^{I}(\kappa_j)|$  for all $M$. By~\eqref{eq:decompshat}, we then have
\begin{equation}\label{eq:uuuuu}
    \frac{\hat{s}_j}{p_j}\leq \frac{|S_j^{L}(\kappa_j)|+|S_j^{I}(\kappa_j)|}{p_j}\sum_{M \in \T(\bkappa)}NC(M) + C_j.
\end{equation}
Moreover, for every $j=1,\ldots,b$, $C_j$ satisfies
\begin{equation}\label{eq:uuuuu2}
    C_j = \sum_{M \in \M\setminus \T(\bkappa)}NC(M)\sum_{i \in B_j}\frac{I(i\in M_j)}{p_j}\leq \sum_{M \in \M\setminus \T(\bkappa)}NC(M)
\end{equation}
where the inequality follows from $\sum_{i \in B_j}\frac{I(i\in M_j)}{p_j}\leq 1$ for all $M$.
Taking expectations in~\eqref{eq:uuuuu} and~\eqref{eq:uuuuu2} gives
\begin{equation*}
    \E\bigg(\frac{\widehat{s_j}}{p_j}\bigg) 
    \leq \frac{|S_j^{L}(\kappa_j)|+|S_j^{I}(\kappa_j)|}{p_j}\sum_{M \in \T(\bkappa)}\E(NC(M)) + \sum_{M \in \M\setminus \T(\bkappa)}\E(NC(M)).
\end{equation*}
 By Theorem~\ref{thm:convtoT}, we have on one hand $\lim_{n\to \infty}\sum_{M \in \T(\bkappa)}\E(NC(M))= 1$ and, on the other hand, $\lim_{n\to \infty}\sum_{M \in \M\setminus\T(\bkappa)}\E(NC(M))= 0$. It follows that $ \lim_{n\to \infty}\E\big(\frac{\widehat{s_j}}{p_j}\big) \leq  \frac{|S_j^{L}(\kappa_j)|+|S_j^{I}(\kappa_j)|}{p_j}= \frac{s_j-|S_j^{S}(\kappa_j)|}{p_j}$ for every $j=1,\ldots,b$, which proves the upper bound.

 \subsubsection{Proof of Lemma~\ref{lem:empiricalbayesinterpret}}

Denote the marginal likelihood of $\by$ given $\btheta$ by $H(\btheta)=p(\by \mid \btheta)$. For every $j=1,\ldots,b$, the partial derivative of its logarithm with respect to $\theta_j$ is
    \begin{equation}\label{eq:logderivative}
        \frac{\partial \ln H(\btheta)}{\partial \theta_j}=\frac{\partial H(\btheta)}{\partial \theta_j}\,H(\btheta)^{-1}.
    \end{equation}
    Observe that:
    \begin{equation}\label{eq:partialmarglikelihood}
    H(\btheta)=\sum_{M \in \M}p(\by \mid M,\btheta)p(M \mid \btheta)
    \quad \text{and}\quad 
    \frac{\partial H(\btheta)}{\partial \theta_j}=\sum_{M\in \M}p(\by \mid M,\btheta)\frac{\partial p(M \mid \btheta)}{\partial \theta_j}
    \end{equation}
    Recall that each model is defined as $M=(m_1,\ldots,m_p)$ where $m_i = I(\beta_i \neq 0)$ indicates whether variable $j$ is included under $M$, and that our choice of model prior in~\eqref{eq:block_modelprior} is 
    \begin{align}
        p( M \mid \btheta)=
        \prod_{j=1}^b \big(\theta_j\big)^{\sum_{i \in B_j} m_i}
        \big(1 - \theta_j\big)^{p_j - \sum_{i \in B_j} m_i}.
        \nonumber
    \end{align}
    Hence, simple algebra shows that for every $M\in \mathcal M$
    \begin{equation}\label{eq:partialprobmodel}
    \frac{\partial p(M \mid \btheta)}{\partial \theta_j}= p(M \mid \btheta) \bigg(\frac{\sum_{i\in B_j}m_i}{\theta_j}-\frac{p_j-\sum_{i\in B_j}m_i}{1-\theta_j} \bigg).       
    \end{equation}
    
    Replacing~\eqref{eq:partialprobmodel} into~\eqref{eq:partialmarglikelihood}, and using that for any function $f$ $$\sum_{M\in \M}\sum_{i\in B_j}m_if(M)=\sum_{i\in B_j}\sum_{M\in \M:m_i=1}f(M),$$ we get
    \begin{align}\label{eq:partialmarglikelihood2}
        \frac{\partial H(\btheta)}{\partial \theta_j}&\;=\;\frac{1}{\theta_j}\sum_{i \in B_j}\sum_{M\in \M: m_i=1}p(\by \mid M,\btheta)p(M \mid \btheta) \\&\quad- \frac{1}{1-\theta_j}\bigg[p_j H(\btheta)-\sum_{i \in B_j}\sum_{M\in \M: m_i=1}p(\by \mid M,\btheta)p(M \mid \btheta). \nonumber
        \bigg]
    \end{align}
    Note that
    \begin{align*}
        \frac{\sum_{M\in \M: m_i=1}p(\by \mid M,\btheta)p(M \mid \btheta)}{H(\btheta)} = \frac{\sum_{M\in \M: m_i=1}p(\by , M\mid \btheta)}{p(\by \mid \btheta)}= P(m_i = 1\mid \by,\btheta)
    \end{align*}
    By~\eqref{eq:logderivative}, we then get the following expression of the partial derivative, for every $j=1,\ldots,b$,
    $$\frac{\partial \ln H(\btheta)}{\partial \theta_j}=\frac{1}{\theta_j}\sum_{i \in B_j}P(m_i = 1\mid \by,\btheta) - \frac{1}{1-\theta_j}\bigg[p_j-\sum_{i \in B_j}P(m_i = 1\mid \by,\btheta)
        \bigg]$$
    Setting the partial derivatives to 0 and solving for $\theta_j$ gives the desired result.

\subsubsection{Proof of Theorem~\ref{theo:informedl0consist}} The proof strategy is to show Assumptions~\hyperlink{lab:A1}{A1} and \hyperlink{lab:A2}{A2} hold to apply Theorem~\ref{theo:suffcondlinearmodel}. We first derive a convenient decomposition of the penalties $\kappa^A_j$. The second step is to show that, under Assumption~\hyperlink{lab:A3}{A3}, $\kappa^{A}_j$ satisfies Assumption~\hyperlink{lab:A1}{A1} with probability going to 1 as $n$ grows. The third step is to show that Assumption~\hyperlink{lab:A4}{A4} implies Assumption~\hyperlink{lab:A2}{A2} for the $\kappa^{A}_j$ with probability going to 1. The consistency of $\hat S^{A,ei}$ then follows from Theorem~\ref{theo:suffcondlinearmodel}.

Denote for any $M\in\M$, $NC^{\circ}(M)$, the normalized criterion value for model $M$ under Step 1 penalty $\kappa^{\circ}$. For this choice of penalty and every $j=1,\ldots,b$, we have
\begin{align*}
    \frac{\widehat{s_j}}{p_j}
    &\;=\; \frac{1}{p_j}\sum_{i\in B_j}\,\,\sum_{M\in \M}NC^{\circ}(M)I(i\in M)\\
   &\;=\; \frac{1}{p_j}\sum_{M \in \M}NC^{\circ}(M)\sum_{i \in B_j}I(i\in M) \\
   &\;=\;\frac{s_j}{p_j}NC^{\circ}(S) + \sum_{M \in \M | M \neq S}\frac{|M_j|}{p_j}NC^{\circ}(M).
\end{align*}
Using that $NC^{\circ}(S)=1-\sum_{M \in \M | M \neq S}NC^{\circ}(M)$, we get
\begin{equation*}
   \frac{\hat{s}_j}{p_j}
   =\frac{s_j}{p_j} + \sum_{M \in \M | M \neq S}\frac{|M_j|-s_j}{p_j}NC^{\circ}(M).
\end{equation*}
Consider the decomposition of the sum in the right-hand side above between the sum over models $M$ that contain more parameters than $S$ in block $j$ and the sum over those that contain fewer parameters than $S$ in block $b$. Denote
$$
    O_j^{\circ} := \sum_{M \in \M | |M_j| > s_j}\frac{|M_j|-s_j}{p_j}NC^{\circ}(M)\quad\text{and}\quad
    U_j^{\circ} := \sum_{M \in \M | |M_j| < s_j}\frac{s_j-|M_j|}{p_j}NC^{\circ}(M).
$$
We have
\begin{equation}\label{eq:rewriteshat}
  \frac{\hat{s}_j}{p_j}  = \frac{s_j}{p_j}\,\,+\,\, O_j^{\circ}-\,\,U_j^{\circ}.
\end{equation}
Observe that we have the following decomposition of Step 2 penalties
$$\kappa^{A}_j
    = \ln(p_j-s_j) + \ln(\sqrt{n}) + \ln\Big(\frac{p_j-\hat{s}_j}{p_j-s_j}\Big). \nonumber$$
By~\eqref{eq:rewriteshat}, it follows that
\begin{eqnarray}\label{eq:decompstep2pen}
    \kappa^{A}_j&=& \ln(p_j-s_j)+\ln(\sqrt{n})+\ln\Big(1-\frac{p_j(O^{\circ}_j-U^{\circ}_j)}{p_j-s_j}\Big),
\end{eqnarray}
completing the first step of the proof.

We continue with the second step of the proof: showing that the $\kappa^{A}_j$'s satisfy Assumption~\hyperlink{lab:A1}{A1} with probability going to 1. Recall that Assumption~\hyperlink{lab:A1}{A1} states that there exists $f_j\to \infty$ (as $n\to \infty$) such that for every sufficiently large $n$,
\begin{equation*}
    \kappa_j \;=\;\ln(p_j-s_j) + f_j.
\end{equation*}
Since $U_j^{\circ}$ is nonnegative, a lower bound on $\kappa^{A}_j$ is
\begin{equation}\label{eq:decompstep2pen2}
    \kappa^{A}_j
    \geq  \ln(p_j-s_j)+\ln(\sqrt{n})+\ln\Big(1-\frac{p_j O^{\circ}_j}{p_j-s_j}\Big).
\end{equation}

Plugging in the definition of $O^{\circ}_j$, we have that
$$\frac{p_j O^{\circ}_j}{p_j-s_j} =  \sum_{M \in \M | |M_j| > s_j}\frac{|M_j|-s_j}{p_j-s_j}NC^{\circ}(M)\leq \sum_{M \in \M | |M_j| > s_j}NC^{\circ}(M)$$
where the inequality follows from $(|M_j|-s_j)/(p_j-s_j)\leq 1$ for all $M$.
Note that if $M$ is such that $|M_j|>s_j$, then $M\not\in\T(\kappa^{\circ})$ (this follows immediately from the definition of $\T(\bkappa)$ in~\eqref{eq:Tkappa}) and therefore $\sum_{M \in \M | |M_j| > s_j}NC^{\circ}(M)\leq \sum_{M \in \M \setminus \T(\kappa^{\circ})}NC^{\circ}(M)$. Moreover, $\kappa^{\circ}$ satisfies Assumption~\hyperlink{lab:A1}{A1}, Assumption~\hyperlink{lab:A3}{A3} is assumed to hold and the assumptions of Theorem~\ref{thm:convtoT} are met for $\kappa^{\circ}$.
Then, by Theorem~\ref{thm:convtoT}, $\lim_{n\to\infty} \sum_{M \in \M \setminus \T(\kappa^{\circ})}\E(NC^{\circ}(M))\lim_{n\to\infty}\sum_{M \in \M | |M_j| > s_j}\E(NC^{\circ}(M)) = 0$. It follows that $\frac{p_j O^{\circ}_j}{p_j-s_j}$ vanishes in probability and so does $\ln\Big(1-\frac{p_j O^{\circ}_j}{p_j-s_j}\Big)$.

With probability going to 1, we then have that
\begin{equation}\label{eq:finallowerboundkappaj2}\kappa^{A}_j 
    \geq  \ln(p_j-s_j)+\ln(\sqrt{n})
\end{equation}
and hence that the $\kappa^{A}_j$'s satisfy Assumption~\hyperlink{lab:A1}{A1}.

For the third part of the proof,
we now show that Assumption~\hyperlink{lab:A4}{A4} implies Assumption~\hyperlink{lab:A2}{A2} for the $\kappa^{A}_j$. Assumption~\hyperlink{lab:A2}{A2} for the $\kappa_j^{A}$ states that for each block $j$ there exists $g_j\to \infty$ such that for large enough $n$,
    \begin{equation*}
        \sqrt{\frac{(1-\gamma) n\rho(\bX)}{6}}{{\beta_{\min,j}^*}} \,- \,\sqrt{\kappa^{A}_j} \;=\; \sqrt{\ln(s_j)} + g_j .
    \end{equation*}
where $\gamma$ takes value
\begin{equation}\label{eq:gammaka}
    \gamma=\frac12\Big(1+\max_j\frac{\ln(p_j-s_j)}{\kappa^{A}_j}\Big).
\end{equation}
To show that Assumption~\hyperlink{lab:A4}{A4} implies Assumption~\hyperlink{lab:A2}{A2} for the $\kappa_j^{A}$ with probability going to 1, it suffices to show that the following two inequalities
\begin{align}
    \sqrt{\frac{ (1-\gamma)n\rho(\bX)}{6}}{\beta_{\min,j}^*} \,&\geq \,  \sqrt{\frac{ (1-\xi)n\rho(\bX)}{6}}{\beta_{\min,j}^*}, \label{eq:oaebr3}\;\;\text{and} \\
    - \,\sqrt{\kappa^{A}_j} \,&\geq \,-\sqrt{\ln\bigg(p-|S^{L}(\kappa^{\circ})|\bigg)+\frac12\ln(n)}\label{eq:oaebr4}
\end{align}
hold with probability going to 1 for $\gamma$ as in~\eqref{eq:gammaka} and $\xi=\frac12\big(1+\max_j \frac{\ln(p_j-s_j)}{\ln(p_j-s_j)+0.5\ln(n)}\big)$ (defined in Assumption~\hyperlink{lab:A4}{A4}). We first show~\eqref{eq:oaebr3} holds with probability going to 1 and then that~\eqref{eq:oaebr4} does too.

By~\eqref{eq:finallowerboundkappaj2} we have that with probability going to 1, for any $j$
\begin{equation*}
\frac{\ln(p_j-s_j)}{\kappa^{A}_j} 
    \leq  \frac{\ln(p_j-s_j)}{\ln(p_j-s_j)+\ln(n)/2}.
\end{equation*}
It follows that
\begin{equation*}
    \gamma=\frac12\Big(1+\max_{j=1, \ldots,b}\frac{\ln(p_j-s_j)}{\kappa^ A_j}\Big) \leq \frac12\big(1+\max_j \frac{\ln(p_j-s_j)}{\ln(p_j-s_j)+\ln(n)/2}\big)=\xi
\end{equation*}
and~\eqref{eq:oaebr3} holds with probability going to 1.

We now upper bound $\kappa^{A}_j$ to show~\eqref{eq:oaebr4} holds with probability going to 1. By~\eqref{eq:decompstep2pen}, we can write
    $$\kappa_j^{A}=\kappa_j^{EB}+\ln(\widehat s_j)= \ln(p_j-s_j)+\ln\big(\sqrt{n}\big)+\ln\Big(1-\frac{p_j(O^{\circ}_j-U^{\circ}_j)}{p_j-s_j}\Big).$$
Since $O_j^{\circ} \geq 0$, we obtain that 
\begin{equation*}
    \kappa^{A}_j \leq \ln\big(p_j-s_j\big)+\ln(\sqrt{n})+\ln\bigg(1+\frac{p_j}{p_j-s_j}U^{\circ}_j\bigg).
\end{equation*}
We split the sum in $U^{\circ}_j$ between models in $\T(\kappa^{\circ})$ and those not in $\T(\kappa^{\circ})$. 
$$U_j^{\circ} = \sum_{M \in \T(\kappa^{\circ}) | |M_j| < s_j}\frac{s_j-|M_j|}{p_j}NC^{\circ}(M)+\sum_{M \in \M\setminus\T(\kappa^{\circ}) | |M_j| < s_j}\frac{s_j-|M_j|}{p_j}NC^{\circ}(M).
$$
If $M\in \T(\kappa^{\circ})$, then by definition $|M_j|\geq |S^{L}_j(\kappa^{\circ})|$ and thus $s_j-|M_j|\leq s_j-|S^{L}_j(\kappa^{\circ})|$. A bound on $s_j-|M_j|$ for $M\not\in \T(\kappa^{\circ})$ is simply $s_j-|M_j|\leq s_j$. It follows that
\begin{eqnarray*}
    U^{\circ}_j \leq \frac{s_j-|S^{L}_j(\kappa^{\circ})|}{p_j}\sum_{M \in \T(\kappa^{\circ}) | |M_j| < s_j}NC^{\circ}(M)+  \frac{s_j}{p_j}\sum_{M \in \M \setminus \T(\kappa^{\circ})| |M_j| < s_j}NC^{\circ}(M)
\end{eqnarray*}
By Theorem~\ref{thm:convtoT}, $\sum_{M \in  \T(\kappa^{\circ})| |M_j| < s_j}NC^{\circ}(M)$ and $\sum_{M \in \M \setminus \T(\kappa^{\circ})| |M_j| < s_j}NC^{\circ}(M)$ converge in probability to 1 and 0 respectively. We then get that, with probability going to 1,
\begin{align}
    \frac{p_j}{p_j-s_j}U^{\circ}_j &\leq \frac{s_j-|S^{L}_j(\kappa^{\circ})|}{p_j-s_j} \label{eq:gewou}
\end{align}
By~\eqref{eq:gewou}, with probability going to 1,
\begin{align}\label{eq:finalupperboundkappaj2}
    \kappa_j^A &\leq  \ln\big(p_j-s_j\big)+\ln(\sqrt{n})+\ln\bigg(1+\frac{s_j-|S^{L}_j(\kappa^{\circ})|}{p_j-s_j}\bigg)\nonumber\\
    &= \ln\big(p_j-|S^{L}_j(\kappa^{\circ})|\big)+\frac{1}{2}\ln(n).
\end{align}
which shows~\eqref{eq:oaebr4} holds with probability going to 1 and that
Assumption~\hyperlink{lab:A4}{A4} implies Assumption~\hyperlink{lab:A2}{A2} holds for the $\kappa^{A}_j$ with probability going to 1. 

Since Assumptions~\hyperlink{lab:A1}{A1} and \hyperlink{lab:A2}{A2} hold with probability going to 1, by Theorem~\ref{theo:suffcondlinearmodel}, $\inf_{\bbeta^*\in\bB}\lim_{n\to\infty}P(\hat{S}^{A,ei}= S)= 1$, as claimed.

\subsection{Proof of Corollary~\ref{cor:TLconsist}}

The result follows directly from Theorem~\ref{theo:informedl0consist}, given that $\kappa^{\circ}$ is assumed to satisfy Assumption~\hyperlink{lab:A3}{A3} and that
Assumption~\hyperlink{lab:A4}{A4} holds under Assumption~\hyperlink{lab:A5}{A5} .

\end{document}